\documentclass[10pt]{article}
\usepackage{amsfonts,epsfig}
\usepackage{amsfonts,epsfig,a4wide}
\usepackage{amsfonts,a4wide}
\usepackage{graphics}
\usepackage{booktabs}

\usepackage{amsmath,amssymb,amsthm}
\usepackage{url, cleveref}

\usepackage{float}

\usepackage{appendix}

\newtheorem{Th}{Theorem}
\newtheorem{Prop}{Proposition}
\newtheorem{Lemma}{Lemma}
\newtheorem{Cor}{Corollary}

\newtheorem*{Prob}{Problem}

\newtheorem{Ex}{Example}
\newtheorem{Rem}{Remark}
\newtheorem{Def}{Definition}

\newcommand{\veps}{\varepsilon}

\newcommand{\R}{\mathbb{R}}
\newcommand{\Z}{\mathbb{Z}}
\newcommand{\N}{\mathbb{N}}

\newcommand{\gst}{g^{\mathrm{std}}}

\newcommand{\gast}{\gamma^{\mathrm{std}}}

\newcommand{\dgast}{\dot{\gamma}^{\mathrm{std}}}

 \newcommand{\bbR}{\mathbb{R}}
\newcommand{\weg}[1]{}

\begin{document}


\title{Sophus Lie’s problem on two-dimensional metrics with projective symmetries: completing the local classification
}

\author{Gianni Manno\thanks{Dipartimento di Scienze Matematiche, Politecnico di Torino, Corso Duca degli Abruzzi 24, 10129 Torino (Italy), giovanni.manno@polito.it}, Vladimir S. Matveev\thanks{Institute of Mathematics, FSU Jena, 07737 Jena Germany,  vladimir.matveev@uni-jena.de}, Filippo Salis\thanks{Dipartimento di Scienze Matematiche, Politecnico di Torino, Corso Duca degli Abruzzi 24, 10129 Torino (Italy), filippo.salis@polito.it}}

\maketitle

\begin{abstract}

We complete the local classification, up to isometry, of $2$-dimensional pseudo-Riemannian metrics (i.e. both Riemannian and Lorentzian), admitting a projective symmetry algebra of dimension at least two. The new contribution is the treatment of the non-regular case: we obtain a complete list of mutually non-isometric normal forms in neighbourhoods of points where the action of the projective Lie symmetry algebra fails to be regular, including all singular behaviours that can occur.
Together with the known classification in the regular case, this completes the solution of the problem posed by Sophus Lie in 1882.
\end{abstract}

%

\section{Introduction}\label{sec.introduction}

\subsection{Basic definitions and conventions}

In this paper, the words ``\emph{metric}'' and ``\emph{pseudo-Riemannian metric}'' are used interchangeably to indicate both Riemannian and Lorentzian $2$-dimensional $C^\infty$-smooth metrics, unless otherwise specified. The word ``\emph{connection}'' stands for \emph{symmetric and affine connection.}

\smallskip\noindent
Two connections  $\Gamma$ and $\bar \Gamma$ on the same manifold $M$ are \emph{projectively equivalent} if they share the same unparametrised geodesics. The equivalence class of a connection w.r.t. projective equivalence will be called the \emph{projective class} of the connection. 
Similar definitions apply to metrics by considering their Levi-Civita connection.
A vector field on $M$ is called \emph{projective} with respect to a connection $\Gamma$ (resp. to a metric $g$) if its
local flow is made of geodesic-preserving transformations, i.e., transformations sending unparametrised geodesics of $\Gamma$ (resp. of $g$) into
unparametrised geodesics of $\Gamma$  (resp. of $g$). The bracket of two projective vector fields is itself a projective vector field, so that the set of projective vector fields has the structure of a Lie algebra, that turns out to be finite-dimensional. Of course, projectively equivalent connections (resp. metrics) share the same projective Lie algebra.
We shall consider mainly the Lie algebra of projective vector fields of a metric $g$, that we denote by $\mathfrak{p}= \mathfrak{p}(g)$. Likewise, a Lie subalgebra of $\mathfrak{p}(g)$ will be denoted by $\mathfrak{h}=\mathfrak{h}(g)$.
In this paper we study $2$-dimensional metrics $g$ with  $\dim\mathfrak{p}(g)\geq 2$.

\bigskip\noindent
\textbf{Conventions:} In what follows, all manifolds are assumed connected and, by a neighbourhood, we always mean an open neighbourhood.
By \emph{trivial} Killing vector field we mean the identically zero Killing vector field. We use the Einstein convention, i.e., repeated indices are implicitly summed over.


\subsection{Description of the main results}\label{sec.main.def.res}


The following  problem was  posed by Sophus Lie\footnote{German original from \cite{Lie}, Abschn. I, Nr. 4, Problem II:
\emph{Man soll die Form des Bogenelementes einer jeden Fl\"ache bestimmen,
deren geod\"atische Kurven mehrere infinitesimale Transformationen gestatten}.} in 1882:
\begin{Prob}[Lie]
Determine the local form of all $2$-dimensional metrics $g$ such that $\dim\mathfrak{p}(g)\geq 2$.
\end{Prob}
The reader can consult \cite{BMM,duna,MV,Matveev_first} for the history of the problem and for the description of the circle of ideas. For the connection with the results of Aminova one can see \cite{Aminova0,Aminova2}.  The question has been also studied in the context of parabolic geometries; see, e.g., \cite{melnik}, under the specific assumption that the symmetries (which, in the present setting, are projective vector fields) have a higher-order zero unless the geometry is flat.

\subsubsection{Starting point: solution of Lie's problem near almost every point}

\smallskip
The local description of $2$-dimensional metrics $g$ with  $\dim\mathfrak{p}(g)\geq 2$ in a neighbourhood of almost every point was obtained in \cite{BMM} and it is given in the following theorem.

\begin{Th}[\cite{BMM}] \label{th.Bryant.Manno.Matveev}
Let $(M,g)$ be a $2$-dimensional pseudo-Riemannian manifold with $\dim\mathfrak{p}(g)\geq 2$. Then, almost every point  of $M$ has a neighbourhood $U$ such that $g|_U$ has constant curvature, or such that there exists a coordinate system $(x,y)$ on $U$
where 
$g|_U$ takes one of the following forms:
\begin{enumerate}
\item Metrics with $\dim\mathfrak{p}(g)=2$.

\begin{enumerate}

\item $\veps_1 e^{(b+2)\,x}\,dx^2
    +\veps_2 e^{b\,x}\,dy^2 $, \label{1a}
where $b\in \mathbb{R}\setminus \{-2,0,1\}$
and $\veps_i\in \{-1,1\}$ are constants, \label{case1a}

\item \label{case1b}
 $a\left( \frac{ e^{(b+2) \, x} dx^2}
                           {(e^{b\,x } +\veps_2)^2}
     + \veps_1\frac{ e^{b\,x}dy^2}{e^{b\,x }+\veps_2}\right)$,
where $a\in\mathbb{R}\setminus\{0\}$, $b\in \mathbb{R}\setminus \{-2,0,1\}$,
and $\veps_i\in \{-1,1\}$ are constants, and

\item \label{case1c}
$a\left(\frac{e^{2\,x}dx^2}{x^2}+\veps \frac{dy^2}{x}\right)$,
where $a\in\mathbb{R}\setminus\{0\}$, and $\veps\in \{-1,1\}$ are constants.

\end{enumerate}

\item Metrics with $\dim\mathfrak{p}(g)=3$.

\begin{enumerate}

\item \label{case2a}
$\veps_1 e^{3x}dx^2+\veps_2 e^{x} dy^2$,
where $\veps_i\in \{-1,1\}$ are constants,

\item \label{case2b}
$a\left(\frac{e^{3x}dx^2}{(e^{x}+\veps_{2})^{2}} +
\veps_1\frac{e^{x}dy^2}{(e^{x}+\veps_{2})}\right)$,
where $a\in \mathbb{R}\setminus\{0\}$, $\veps_i\in \{-1,1\}$ are constants, and

\item \label{case2c}
$a\left(\frac{dx^2}{(2x^2+cx+\veps_2)^2 x}
          + \veps_1\frac{x dy^2}{ (2x^2+cx+\veps_2)}\right)$,
where $a>0$, $\veps_i\in \{-1,1\}$, $c\in \bbR$ are constants.

\end{enumerate}

\end{enumerate}
No two metrics from different entries of this list, or with different parameters, are locally isometric.
\end{Th}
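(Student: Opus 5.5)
The plan is to follow the strategy of \cite{BMM}. Everything rests on the linearisation of projective equivalence: for a $2$-dimensional metric $g$, the metrics projectively equivalent to $g$ correspond, via $a_{ij}=|\det g|^{-2/3}\bar g^{-1}$-type formulas, to solutions of a linear overdetermined PDE system $\mathcal{L}_g(a)=0$ on symmetric $(0,2)$-tensors. The solution space $\mathcal{S}(g)$ has dimension $D(g)\le 6$. Every projective vector field $v\in\mathfrak{p}(g)$ acts linearly on $\mathcal{S}(g)$ by $a\mapsto \mathcal{L}_v a-\tfrac{2}{3}\mathrm{div}(v)\,a$, and this gives a representation of $\mathfrak{p}(g)$. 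On the dense open set where the rank of the relevant objects ($D(g)$ locally, $\dim\mathfrak{p}$, the orbit dimension of $\mathfrak{p}$) is locally constant, I work in a neighbourhood $U$ of a generic point. If $D(g)\ge 3$, classical results (Koenigs) show that $g$ has constant curvature or is projectively equivalent to one, and then $\dim\mathfrak{p}$ is either $8$ (constant curvature case) or generic cases are excluded. So the main case is $D(g)=2$, the "Liouville" case. Here one can choose coordinates in which $g$ is in Liouville form $(X(x)-Y(y))(dx^2\pm dy^2)$, or in the Lorentzian complex or Jordan-block analogues.

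Since $\dim\mathcal{S}=2$, $\mathfrak{p}(g)$ acts on a $2$-dimensional space, and the pencil spanned by $g$ and a second solution $\bar g$ is preserved. A projective vector field either is Killing for some metric of the pencil, or acts by a non-trivial linear map. First I restrict to a $2$-dimensional subalgebra $\mathfrak{h}\subset\mathfrak{p}$, which is either abelian or the non-abelian $\mathfrak{aff}(1)$. Near a generic point the orbits of $\mathfrak{h}$ are open, or $1$-dimensional if both fields are proportional. In the open-orbit case I straighten one field to $\partial_y$. A projective field that preserves the Liouville structure must then be (up to the pencil) a homothety-like field, so I use Lie's approach: write the second field as $v=\alpha\partial_x+\beta\partial_y$, impose the bracket relation, and plug into the projective Killing equation, written as the linear PDE on $a\in\mathcal{S}$. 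Using the $\partial_y$-invariance, the Liouville functions reduce to $X(x)$ and $Y$ constant, or to the analogous forms. This turns the system into ODEs for $X$, whose solutions are the exponentials $e^{bx}$, the Möbius-type combinations $e^{bx}/(e^{bx}+\veps_2)$, and the degenerate (Jordan-block) case that produces the $x$ and $\log$-type entry~\ref{case1c}. The values $b\in\{-2,0,1\}$ are excluded because there the curvature becomes constant or $\dim\mathfrak{p}$ jumps. For $\dim\mathfrak{p}=3$ the algebra must be $\mathfrak{sl}(2)$-like acting on the pencil, which forces $b=1$ and gives cases~\ref{case2a}--\ref{case2c}. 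The quadratic $2x^2+cx+\veps_2$ arises from the Casimir-type invariant. I also need to rule out $\dim\mathfrak{p}\in\{4,\dots,7\}$ for non-constant curvature; this is a classical fact (submaximal dimension for projective structures is $3$, and $\ge4$ forces flatness of the projective structure).

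Finally, to prove that the listed metrics are pairwise non-isometric, I compute scalar invariants: the curvature $K$, $|\nabla K|^2$, and the eigenvalue ratios of the pencil (projective invariants of $\bar g$ relative to $g$). The parameters $b$, $a$, $c$ and the signs $\veps_i$ are recovered from these, for example from functional relations between $K$ and $|\nabla K|^2$ and from the signature data. The main obstacle is the case analysis in the middle step. Ensuring that every $2$-dimensional subalgebra and every Segre type of the pencil (real, complex or Jordan eigenvalues in the Lorentzian case) has been covered, and that the normalisations by scaling and shifting $x$ are used exactly to reach the stated parameter ranges, requires careful bookkeeping. I would first handle the Riemannian diagonal case completely and then transport the computation to the other Segre types formally.
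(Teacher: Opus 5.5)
The paper gives no proof of this theorem. It is quoted from \cite{BMM}, and Section~\ref{sec.proj.conn} only recalls the tools used there: Lie normal forms such as \eqref{eq.ref.proj.conn.2} and \eqref{eq.ref.proj.conn.3} for the projective connection, followed by complete integration of Liouville's system \eqref{lin1}. Judged on its own, your plan breaks at the reduction to $D(g)=2$.

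Your claim that $D(g)\ge 3$ forces constant curvature is false. The classical statement is only that $D(g)=6$ characterises constant curvature, and by Beltrami's theorem the alternative ``or projectively equivalent to one'' is empty. The claim fails exactly on part 2 of the list.

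Take $g=\veps_1e^{3x}dx^2+\veps_2e^{x}dy^2$ (entry \ref{case2a}), which has $R=\veps_1e^{-3x}$.
\begin{itemize}
\item Its projective connection \eqref{eq.ref.proj.conn.2} with $b=1$ is shared by the metrics \ref{case2b}. This gives two independent diagonal solutions of \eqref{lin1}.
\item $Y=2y\partial_x+y^2\partial_y$ is projective, so $\mathcal L_Y a$, with $a=\det(g)^{-2/3}g$, is a third solution. Its off-diagonal entry is $2\veps_1e^{x/3}\neq0$.
\end{itemize}
Hence $D(g)\ge 3$; integrating \eqref{lin1} explicitly gives $D(g)=4$.

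For \ref{case2c} there is an abstract argument. If $D(g)=2$, then $\mathfrak p(g)\cong\mathfrak{sl}(2,\R)$ acts on the $2$-dimensional solution space in one of two ways.
\begin{itemize}
\item Trivially. Then $\mathcal L_Xa=0$ makes every projective field Killing, and the curvature is constant.
\item By the standard representation. Then the Killing field $\partial_y$ annihilates $a$, so it must be nilpotent. But in \ref{case2c} it is elliptic ($\veps_1\veps_2=1$) or hyperbolic ($\veps_1\veps_2=-1$).
\end{itemize}
So working only with the ``Liouville pencil'' discards every metric with $\dim\mathfrak p(g)=3$. Your later step ``$\mathfrak{sl}(2)$ acting on the pencil forces $b=1$'' therefore cannot be carried out.

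There is a second unjustified step, present even when $\dim\mathfrak p(g)=2$. After straightening a field of $\mathfrak h$ to $\partial_y$, you use the $\partial_y$-invariance of $g$, which means assuming that field is Killing for $g$. Straightening a projective field only makes $F^0,\dots,F^3$ independent of $y$. The metric $g$ is one solution of \eqref{lin1}, and $\mathcal L_{\partial_y}$ acts on the solution space by a linear map that need not annihilate it. For example, in the \ref{case2a} class, $a+t\,\mathcal L_Ya$ depends on $y$ although $\partial_y$ is projective.

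For $[X_1,X_2]=X_1$ you get for free only that $\mathcal L_{X_1}$ is nilpotent on the solution space. That yields a Killing field for \emph{some} metric of the class, not necessarily for $g$. \cite{BMM} avoid this by putting the projective connection into Lie's normal form and solving \eqref{lin1} completely. The metrics of the class, their Killing field and the parameter ranges then all come out of one explicit computation.

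Finally, your outline names but does not treat two cases:
\begin{itemize}
\item the case where the two fields of $\mathfrak h$ are everywhere proportional, which gives constant curvature (\cite[Lemma 1]{BMM});
\item the abelian case, where the coefficients in \eqref{eq.proj.conn.associated} become constant, so the structure is projectively flat and hence of constant curvature by Beltrami.
\end{itemize}
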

%
%
%
\noindent

\begin{center}
{\underline{\textbf{Notation}:}} 
\begin{itemize}
\item We refer to $(M,g)$ as a $2$-dimensional pseudo-Riemannian manifold and
%
\item
$(\mathcal{M},\gst)$ as a \emph{standard model}, where $\gst$ is one of the metrics \ref{case1a}-\ref{case2c} of Theorem \ref{th.Bryant.Manno.Matveev} and $\mathcal{M}\subseteq\R^2$ the domain of $\gst$. Standard models of the same type but with different parameters are considered distinct.
\end{itemize}
\end{center}

\smallskip
\begin{Rem}\label{rem.proj.v.f.standard}
Note that all metrics $\gst$, i.e., all metrics \ref{case1a}-\ref{case2c} of Theorem \ref{th.Bryant.Manno.Matveev}, admit the Killing vector field $\partial_y$, which is unique up to a multiplicative constant. Moreover, this Killing vector field has length different from zero at every point.
A basis for the Lie algebra of projective vector fields of metrics \ref{case1a}, \ref{case1b}, \ref{case1c} is formed by
$$
\partial_y\,,\quad \partial_x+y\partial_y
$$
whereas for metrics \ref{case2a} and \ref{case2b} it is given by
$$
\partial_y\,,\quad \partial_x+y\partial_y\,,\quad 2y\partial_x+y^2\partial_y\,.
$$
A basis for the Lie algebra of projective vector fields of metrics  \ref{case2c} is formed by
\begin{itemize}
\item $\partial_y$, \,$-x\cos(y)\partial_x + \sin(y)\partial_y$, \, $x\sin(y)\partial_x +\cos(y)\partial_y$ \quad if $\varepsilon_1\varepsilon_2=1$;
\item $\partial_y$, \, $-xe^y\partial_x+e^y\partial_y$, \,$xe^{-y}\partial_x+e^{-y}\partial_y$  \quad if $\varepsilon_1\varepsilon_2=-1$.
\end{itemize}
We note that, in all the cases, the Lie algebra of projective vector fields acts locally transitively on $\mathcal{M}$ as it possesses only $2$-dimensional orbits. If the Lie algebra $\mathfrak{p}(g)$ of projective vector fields of a $2$-dimensional metric $g$ has only $1$-dimensional orbits, then the metric $g$ has constant curvature (see \cite[Lemma 1]{BMM}).

\noindent
Furthermore, any projective vector field of $\gst|_V$, where $V$ is an open subset of $\mathcal{M}$, is the restriction to $V$ of a projective vector field of $(\mathcal{M},\gst)$, i.e., $\mathfrak{p}(\gst|_V)=\big(\mathfrak{p}(\gst)\big)|_V$. This last property will be justified in Section \ref{sec.proj.conn}, where the method for computing the above projective vector fields is briefly illustrated.
\end{Rem}

\subsubsection{Description of the new results: completion of the solution of Lie’s problem}

The aim of the present paper is to complete the list of Theorem \ref{th.Bryant.Manno.Matveev} by finding normal forms of metrics $g$ on a $2$-dimensional manifold $M$ with $\dim\mathfrak{p}(g)\geq 2$ near the points that were not considered in that theorem. To this purpose, we say that a point $p\in (M,g)$ is \emph{regular} if there exists a neighbourhood $U$ of $p$ such that $g|_{U}$ is described by a metric of Theorem \ref{th.Bryant.Manno.Matveev}, i.e., $g|_{U}$ is either of constant curvature or 
%
there exists a diffeomorphism $\varphi$ from $U$ to a neighbourhood $V$ of a standard model $(\mathcal{M},\gst)$ such that $\varphi^*(\gst|_V)=g|_U$. In the latter situation, we can refer to such points as \emph{points of type \ref{case1a},\dots,\ref{case2c}} \emph{or of constant curvature}, depending on the case.
Points which are not regular will be called \emph{singular}. 

\smallskip
On account of the above definitions, we introduce the following sets:
$$
\emph{\text{Regular locus:}}=\{p\in M\,\,|\,\, \text{$p$ is a regular point}\}\,, \quad
\emph{\text{Singular locus:}}=\{p\in M\,\,|\,\, \text{$p$ is a singular point}\}\,.
$$

\smallskip
In view of Remark \ref{rem.proj.v.f.standard}, points $p\in M$ such that the dimension of the vector distribution on $M$ associated to $\mathfrak{p}(g)$ drops at $p$ are candidates to be singular, and actually they are, but we will see, cf. Section \ref{sec.proj.v.f.standard.2} below, that they are not the only ones.

\medskip\noindent
Now we are ready to state the main result of the paper, that is contained in the following theorem.

\begin{Th}\label{th.main}
%
Let $(M,g)$ be a $2$-dimensional pseudo-Riemannian manifold with $\dim\mathfrak p(g)\geq 2$. Then, every singular point of $M$ has a neighbourhood $U$ 
endowed 
with a coordinate system $(x,y)$ where
$g|_U$ takes one of the following forms:
\begin{enumerate}
\item Metrics with $\dim\mathfrak{p}(g)=2$.

\begin{enumerate}

\item\label{case1a.main} $\varepsilon^{h+1}(1+yx^{h+1})^{-\frac{h+2}{h+1}}dxdy$,

\item\label{case1b1.main}
$\kappa \left(\varepsilon_1\frac{d{x}^2}{(1+{x}^{h+2})^2}
+
\varepsilon_2\,
\frac{d{y}^2}{1+{x}^{h+2}}\right)$,

\item\label{case1b1bis.main}
$
\kappa\left(\varepsilon_1\frac{d{x}^2}{(1-{x}^{2h+2})^2}
+
\varepsilon_2\,
\frac{d{y}^2}{1-{x}^{2h+2}}\right)
$,

\item\label{case1b2.main} 
$
\kappa\left(\frac{2}{(1+ x^{h+2})^{\frac32}}\,d{x}\,d{y}
+
\veps\,
\frac{{x}^{h+2}}{1+ x^{h+2}}\,d{y}^{2}\right)
$,

\item\label{case1b2bis.main} 
$\kappa
\left(\frac{2}{(1- x^{2h+2})^{\frac32}}\,d{x}\,d{y}
+
\veps\,
\frac{{x}^{2h+2}}{1- x^{2h+2}}\,d{y}^{2}\right)
$,
\end{enumerate}
where $\veps,\veps_i\in\{-1,1\}$, $h\in\mathbb{N}$,  $\kappa>0$, with the additional convention that, in case \ref{case1a.main}, $\varepsilon=1$ when $h$ is odd.

\item Metrics with $\dim\mathfrak{p}(g)=3$. \label{case2c.main}


\begin{enumerate}

\item \label{case2c.singular}
$\kappa\left(\frac{\veps_1}{f(s)}\left(dx^2+dy^2 \right)
+\,
\frac{h(s)}{f^2(s)} (xdx+ydy)^2\right)$, 

\item \label{case2c.singular.bis}
$\kappa\left(\frac{1}{f(t)}\left(dx^2-dy^2 \right)
+\,
\frac{h(t)}{f^2(t)} (xdx-ydy)^2\right)$, 

\end{enumerate}
where
$$
f(w)=2\veps_2w^2+c\veps_2 w+1\,,\quad h(w)=-2\veps_2 w-c\veps_2 \,,\quad s=\veps_1(x^2+y^2)\,,\quad t=x^2-y^2
$$
and $\veps_i\in\{-1,1\}$, $\kappa>0$, $c\in\R$.
\end{enumerate}
No two metrics from different entries of this list, or with different parameters, are locally isometric.
\end{Th}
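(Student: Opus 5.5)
The plan is to reduce the problem to the local structure of the projective algebra $\mathfrak p(g)$ near a singular point $p$, and then to extend the normal forms of Theorem \ref{th.Bryant.Manno.Matveev} across the singular locus.

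\textbf{Step 1: the regular locus and the Killing field.} Since $\dim\mathfrak p(g)\ge 2$, Theorem \ref{th.Bryant.Manno.Matveev} shows that the regular locus is open and dense. By Remark \ref{rem.proj.v.f.standard}, on each component of the regular locus of non-constant curvature the metric admits a Killing field $\partial_y$, unique up to scale and of nonzero length. Together with $\mathfrak p(\gst|_V)=\mathfrak p(\gst)|_V$, this means the projective algebra of $g$ restricted to any such component is isomorphic to the one of the model. Because projective vector fields are solutions of a finite-type linear PDE system, they are real-analytic in suitable coordinates and are determined by a finite jet. Hence $\mathfrak p(g)$ is globally isomorphic near $p$ to one of the model algebras: either the 2-dimensional non-abelian algebra, $\mathfrak{sl}_2$, or $\mathfrak{so}(3)$/$\mathfrak{sl}_2$ in case \ref{case2c}. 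In particular the Killing field $K$ is a global element of $\mathfrak p(g)$, and it is determined as the generator of the derived algebra in dimension 2, or by the Killing condition in dimension 3. I would first show that constant-curvature points cannot accumulate at $p$ unless $g$ has constant curvature near $p$, in which case $p$ would be regular. This uses the fact that curvature constancy propagates along the finite-type system.

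\textbf{Step 2: classify the possible degenerations of the action.} A point is singular either because the orbit dimension of $\mathfrak p(g)$ drops at $p$, or because the model coordinates blow up while the action remains transitive. In dimension 2, with $\mathfrak p=\langle K,X\rangle$ and $[X,K]=K$, there are two possibilities. If $K(p)=0$, then $K$ is a Killing field with a zero and the metric is rotationally symmetric about $p$. If $K(p)\neq0$, then $X\wedge K$ vanishes at $p$ on a curve. I would then linearize. In the first case $K$ is a rotation or boost, but $[X,K]=K$ forces the linear part of $K$ to be nilpotent, so $K$ must be null along a curve and $g$ Lorentzian. This produces forms of type \ref{case1a.main}. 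In the second case I choose coordinates with $K=\partial_y$ and $X=a(x)\partial_x+y\partial_y$, where $a$ vanishes at $x=0$ to finite order $h+1$ by analyticity. The remaining freedom of reparametrising $x$ normalises $a$ to $x^{h+1}$, possibly up to sign, and this is the origin of the integer $h$. Next I solve the projective-symmetry equations for a metric invariant under $K$ and projectively invariant under $X$. Such a metric is $X$-homothetic up to the projective correction. Matching the solution with models \ref{case1b}, \ref{case1c} on each side of $x=0$ should give forms \ref{case1b1.main}--\ref{case1b2bis.main}, with the Riemannian/Lorentzian split and with the $x^{h+2}$ versus $x^{2h+2}$ dichotomy arising from whether the model factor $e^{bx}+\veps_2$ crosses zero. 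For dimension 3 only case \ref{case2c} can degenerate, namely where the $\mathfrak{so}(3)$ or $\mathfrak{sl}_2$ action has a fixed point at $x=0$. Passing to polar or hyperbolic coordinates $(\sqrt{x}\cos y,\sqrt{x}\sin y)$ yields \ref{case2c.singular} and \ref{case2c.singular.bis}. The case $\dim\mathfrak p=3$ with $\mathfrak{sl}_2$ from \ref{case2a}, \ref{case2b} is treated similarly, and I expect it to yield no smooth extension.

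\textbf{Step 3: smoothness and non-isometry.} For each candidate I must check that the metric extends smoothly and nondegenerately across the singular set, and that the parameters (for instance $b$ in \ref{case1a}) are forced by this smoothness. Presumably $b$ gets tied to $h$, for example $b=-(h+2)/(h+1)$ in \ref{case1a.main}. Non-isometry then follows from invariants: the vanishing order $h$ of $X\wedge K$ or of $|K|^2$ at the singular set, the signature, the constants $\kappa$ and $c$ read off from the regular part via Theorem \ref{th.Bryant.Manno.Matveev}, and the sign conventions. I expect the main obstacle to be Step 2 in dimension 2. There one must show exhaustively that the singular set is a smooth curve with finite vanishing order, control the sign and parity issues (hence the convention $\veps=1$ for odd $h$), and rule out every other gluing of two regular models across the curve.
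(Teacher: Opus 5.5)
There is a genuine gap. Your Step 2 is missing the idea that actually produces the integer $h$, and the mechanism you put in its place fails.

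\textbf{The source of $h$.} Nothing makes the projective fields real-analytic. The metric $g$ is only $C^\infty$, and finite type of \eqref{eq:sys.proj.fields} only says that a solution is determined by a finite jet at one point. So a priori $a$ may vanish to infinite order, or its zeros may accumulate. Even at a zero of finite order $k\ge 2$, $a(x)\partial_x$ cannot in general be reparametrised to $x^k\partial_x$, because there is a formal modulus.

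More decisively, the target forms show that $h$ is not a vanishing order of the action at all. In every form of item 1 the bivector $X\wedge K$ has a \emph{simple} zero along $\{x=0\}$. For example, $K=\partial_y$ and $X=-\frac{2x}{h}\partial_x+y\partial_y$ for \ref{case1b1.main}, while $X\wedge K=-x(1+yx^{h+1})\,\partial_x\wedge\partial_y$ for \ref{case1a.main}. The integer is encoded in the transverse eigenvalue $a'(0)$, equivalently in the parameter $b$ of the adjacent model. The core of the proof is to show that smoothness across the singular curve forces $b$ into a discrete set.

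The paper does this as follows. It builds $Z$ canonically from $K$: $g(Z,Z)=0$, $g(Z,K)=1$ where $K$ becomes null, and $g(Z,K)=0$, $|g(Z,Z)|=1$ otherwise. It computes $Z^m(R)$ in closed form on the model, along a modelled geodesic running into the singular point; the existence of such geodesics is Propositions \ref{prop:admissible.arc}, \ref{prop:admissible.arc.2} and \ref{lem:no-interior-accumulation}. Requiring boundedness for every $m$ gives $b=-\frac{h+1}{h}$, $2+\frac4h$ or $-\frac{h+2}{h+1}$. Coordinates with $Z=\partial_u$, $K=\partial_v$ then give the forms; for type \ref{case1b} one also needs a substitution such as $t=\pm e^{-\frac2h x}$. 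Equating $Z^{h}(R)$, resp.\ $Z^{h-1}(R)$, on the two sides fixes the sign and parity conventions.

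A route along your lines could also produce discrete exponents: linearise $X$ at a hyperbolic zero of $a$ and impose smoothness of the model metric rewritten in the transverse coordinate. But this works only after excluding $a'(0)=0$ and accumulating zeros, which is exactly what your analyticity assumption hides.

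\textbf{The branch $K(p)=0$.} This branch is empty rather than the source of \ref{case1a.main}. You are right that $[X,K]=\pm K$ forces the linear part of $K$ at a common zero to be nilpotent. But the linear part $\nabla K(p)$ of a nontrivial Killing field at a zero is a nonzero $g$-skew endomorphism, i.e.\ a rotation or a boost, hence invertible. If instead $X(p)\neq 0$, the zero set of $K$ would contain the $X$-orbit of $p$, whereas zeros of Killing fields are isolated. The forms \ref{case1a.main} actually arise with $K(p)\neq0$ null. In the paper, $C_1=g(\dot\gamma,K)\neq0$ makes $K$ nowhere zero, and $g(K,K)\to 0$ rules out the Riemannian case.

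\textbf{Constant curvature and gluing.} Your claim that constant curvature ``propagates along the finite-type system'' has no basis. That system governs projective fields and the metrics of the projective class, not $R$. Nothing local forbids gluing, along the singular curve, a constant-curvature region or a model with another $b$ or other signs. For type \ref{case1a} the paper excludes this with the continuous function $Z^{h_0-1}(R)$. It is a nonzero constant on the components with minimal $h_0$ and vanishes at the singular endpoints of all other components. This is combined with a uniform lower bound on the lengths of modelled arcs with fixed $b$. Types \ref{case1b} and \ref{case2c} are handled analogously.

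\textbf{Other steps assumed without proof.}
\begin{itemize}
\item The existence of $K$ near $p$ is immediate only for types \ref{case1a} and \ref{case1b}, via Proposition \ref{homo}. For type \ref{case2c}, a $2$-dimensional subalgebra may degenerate inside the regular set, and $\dim\mathfrak p(g|_U)=3$ is not known a priori. The paper needs the Kostant-connection Lemmas \ref{prop.principal} and \ref{prop.killing.morse} here.
\item In the Lorentzian \ref{case2c} case the singular set is not only a fixed point of the action. It also contains null curves along which the algebra still acts transitively and $K$ is null but nonzero. Singular points there with $dR\neq0$ must be shown separately to carry the form \ref{case2c.singular.bis}.
\item The exclusion of \ref{case1c}, \ref{case2a} and \ref{case2b} is left as an expectation in your plan. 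The paper proves it via divergence of invariants or an infinite affine parameter.
\end{itemize}
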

Theorem \ref{th.Bryant.Manno.Matveev} together with Theorem \ref{th.main} completely solves Lie's problem.

\medskip\noindent
A significant part of the proof of Theorem \ref{th.main} is that of showing that points of different types cannot coexist on the same manifold. More precisely, we have the following theorem.
\begin{Th}\label{th.main.3}
Let $(M,g)$ be a $2$-dimensional pseudo-Riemannian manifold with $\dim\mathfrak{p}(g)\geq 2$. Then regular points of $M$ are of the same type.
\end{Th}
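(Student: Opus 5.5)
The proof will be a connectedness argument on $M$, resting on analyticity and on the fact that projective vector fields are determined by finite jets. Regular points of a given type form an open set, by definition of regularity. Since $M$ is connected, it is enough to show two things. First, the closure of the set of points of a given type (including constant curvature) does not meet the set of regular points of any other type. Second, two such open sets cannot be separated by singular points only. I would rephrase the statement as follows: for each type $T$, the union $U_T$ of regular points of type $T$ has the property that if $U_T$ and $U_{T'}$ are both nonempty, then $T=T'$.

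The key tool is \emph{rigidity of $\mathfrak p(g)$}. Projective vector fields satisfy a finite-type linear PDE system (the prolongation of the projective Killing equation, cf. Section \ref{sec.proj.conn}). Hence a projective vector field vanishing on an open set vanishes on the connected manifold $M$. So the restriction map $\mathfrak p(g)\to\mathfrak p(g|_V)$ is injective for every open $V$. On a region of type $T$, Remark \ref{rem.proj.v.f.standard} identifies $\mathfrak p(g|_V)$ with $\mathfrak p(\gst)$, whose dimension is $2$ for types \ref{case1a}--\ref{case1c}, $3$ for \ref{case2a}--\ref{case2c}, and $8$ for nonflat constant curvature or $8$/$6$-type as appropriate for constant curvature. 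The global algebra $\mathfrak p(g)$ injects into each of these, and conversely each global field restricts to a field of the local algebra.

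The plan then goes as follows. (1) Fix $\mathfrak h=\mathfrak p(g)$ with $\dim\mathfrak h\ge2$. On each regular region of type $T$, the injection $\mathfrak h\hookrightarrow\mathfrak p(\gst_T)$ together with local transitivity forces $\mathfrak h$ to be the full algebra $\mathfrak p(\gst_T)$ when $T$ is non-constant-curvature. Indeed, $\dim\mathfrak h\ge 2$, and every $2$-dimensional subalgebra of the $3$-dimensional algebras either acts transitively or one checks directly that it still determines $T$. Hence the abstract Lie algebra $\mathfrak h$, and in particular the existence of a unique Killing field $K$ (up to scale) in $\mathfrak h$, is global. (2) Use invariants built from $\mathfrak h$ and $g$. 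Examples are the Gaussian curvature $R$, the function $g(K,K)$, and the scalars obtained by applying the non-Killing generators to $R$ or to $g(K,K)$, such as ratios like $X(R)/R$. These are smooth on $M$, and on each type they satisfy an algebraic relation with specific parameters $b,\veps_i,a,c$. By analytic continuation along $M$ these relations persist: the metrics of Theorem \ref{th.Bryant.Manno.Matveev} are real analytic, and I expect $g$ itself to be analytic wherever $\mathfrak h$ has a $2$-dimensional orbit. Otherwise I would use the smooth invariants directly, showing they are locally constant on a dense open set and continuous, hence globally constant. (3) The constant-curvature case must be separated. If $R$ is constant on an open set and $\dim\mathfrak h\le 3$ globally, then the generators restricted there lie in an $8$-dimensional algebra. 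Here I would show, via the relation $\mathcal L_X g$ for projective $X$ (the Killing-type equation $\mathcal L_Xg=2\phi g + \ldots$), that $R$ constant propagates along orbits of $\mathfrak h$ and through singular curves by continuity of all derivatives of $R$.

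The main obstacle is step (2) across the singular set: showing that the invariants distinguishing types and parameters, including the discrete signs $\veps_i$, which may flip across the zero set of $g(K,K)$ or the degenerate orbits, remain consistent across singular points. I would first try to prove that the singular locus has empty interior and consists of orbits of dimension $\le1$ of $\mathfrak h$. Then I would show that a parameter-valued invariant, such as the constant $b$ recovered from the eigenvalue of $\mathrm{ad}$ acting on $K$ together with a normalised curvature quantity, is smooth through those orbits. Such an invariant is locally constant off a nowhere dense set and continuous, hence constant.
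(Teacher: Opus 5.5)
\textbf{There is a genuine gap.} What you call the main obstacle in step (2) is the entire content of the theorem, and none of the mechanisms you propose reaches it.

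Step (1) already overreaches. Injectivity of $\mathfrak p(g)\to\mathfrak p(g|_V)$ only gives $\dim\mathfrak p(g)\le\dim\mathfrak p(\gst)$. If $\dim\mathfrak p(g)=2$ while the regular points are of type \ref{case2a}--\ref{case2c}, then $\mathfrak p(g)$ is a $2$-dimensional subalgebra that need neither contain the Killing field nor act transitively (Example \ref{ex.1}). So the global $K$ you use in step (2) is not available a priori. The paper handles this through the non-transitivity locus (Lemmas \ref{lem:noopen.collinearity} and \ref{lem:geodesic.Sigma}), and obtains a Killing field near singular points only from the Kostant-connection Lemmas \ref{prop.principal} and \ref{prop.killing.morse}. (Also, constant curvature always gives the $8$-dimensional $\mathfrak{sl}(3,\R)$, not ``8/6''.)

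In step (2) you never exhibit a function that is smooth through the singular points, locally constant on each regular region, and different on different types. Once such a function exists the theorem is immediate, since its differential vanishes on the dense regular set; so this restates the problem rather than reducing it. Your candidate $X(R)/R$ is $0/0$ at the singular points (for \ref{case1a}, $R=\veps_1 b e^{-(b+2)x}\to0$ as $x\to+\infty$ with $-2<b<-1$). It is also not constant on \ref{case1b} regions and is undefined on flat ones.

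Analyticity is a reasonable expectation where $\mathfrak p(g)$ acts locally transitively (locally homogeneous projective structure plus the finite-type system \eqref{lin1}). But the singular points where you would need it are exactly those where transitivity fails: the curve $x=0$ of \ref{case1a.main}--\ref{case1b2bis.main}, along which all projective fields are collinear, and the origin of \ref{case2c.singular}, \ref{case2c.singular.bis}, where they all vanish. There nothing forces analyticity a priori. Indeed, sign parameters genuinely change across these curves ($\varepsilon=(-1)^{h-1}\bar\varepsilon$ in Section \ref{sec:local.1a}, $\bar\varepsilon_2=(-1)^h\varepsilon_2$ in Section \ref{sec:local.1b}). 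Your intended description of the singular locus as a union of orbits of $\mathfrak p(g)$ of dimension $\le1$ is also false: for \ref{case2c.singular.bis} the null lines $x=\pm y$ minus the origin are singular, although $\mathfrak p(g)$ spans the tangent space there.

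Step (3) fails for the same reason. A smooth $R$ can vanish identically on one side of a curve and not on the other, and continuity of its derivatives rules out nothing. Lie-algebraic data cannot decide it either. They do give something: by Proposition \ref{homo}, the homothety constant of the generator $X$ normalised by $[K,X]=K$ is global, which at once excludes two \ref{case1a} regions with different $b$. But the flat Lorentzian metric $2\,du\,dv$ carries the null Killing field $K=\partial_v$ and the homothety $X=(c-1)u\partial_u+v\partial_v$, with $[K,X]=K$ and $\mathcal L_Xg=c\,g$ for any $c$. That is exactly the global structure imposed by a \ref{case1a} region.

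What is missing is the quantitative mechanism of the paper:
\begin{enumerate}
\item Singular points are reached by modelled geodesic arcs whose images in the standard models are suspicious (Propositions \ref{prop:admissible.arc.2} and \ref{lem:no-interior-accumulation}).
\item Boundedness of $R$, $I$, $\Delta R$, $g(K,K)$ and finiteness of the affine parameter exclude \ref{case1c}, \ref{case2a}, \ref{case2b} altogether (Proposition \ref{impossible}). In cases \ref{case1a}, \ref{case1b} they also show $K\neq0$ at the singular points, so a canonical field $Z$ normalised by $K$ exists across them.
\item Boundedness of $Z^m(R)$ for all $m$ then forces $b$ into discrete families. For the minimal index $h_0$, the smooth function $Z^{h_0-1}(R)$ (resp.\ $Z^{h_0}(R)$ for \ref{case1b}) is nonzero at the singular endpoints of the regions with index $h_0$. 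It vanishes at those of regions with larger index or of constant curvature.
\item Combined with finiteness of arcs with a fixed parameter along one geodesic (a lower bound on their lengths), continuity at a point of $\overline A\cap\overline B$ gives the contradiction.
\end{enumerate}
Case \ref{case2c} instead uses $I=0$ at singular points, the Morse property of $R$ (or $dR\neq0$) there, which excludes constant-curvature neighbours, and strict monotonicity of $R$ along geodesics transverse to $K$.
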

Also, a direct computation shows that all metrics \ref{case1a.main}-\ref{case2c.singular.bis} of Theorem \ref{th.main} admit a Killing vector field, unique up to a multiplicative constant. Indeed, another important part of the proof of Theorem \ref{th.main} is devoted to establishing the following theorem, which will be useful, among other things, for characterizing the singular locus, see Section \ref{sec.proj.v.f.standard.2} below.
\begin{Th}\label{th.main.2}
Let $(M,g)$ be a $2$-dimensional pseudo-Riemannian manifold with $\dim\mathfrak{p}(g)\geq 2$. Then in a 
neighbourhood of every point it admits a nontrivial Killing vector field.\footnote{In a sufficiently small neighbourhood of every regular point,
Theorem \ref{th.main.2} follows  from Theorem
\ref{th.Bryant.Manno.Matveev}, cf. Remark \ref{rem.proj.v.f.standard}. 
}
\end{Th}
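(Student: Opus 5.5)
We have to show that every point $p\in M$, singular ones included, has a neighbourhood carrying a nontrivial Killing vector field. Near regular points this already follows from Theorem \ref{th.Bryant.Manno.Matveev} and Remark \ref{rem.proj.v.f.standard}, so the plan is to propagate the Killing field from the regular locus to singular points. The tools are the finite-dimensionality of the Killing and projective algebras and the linear prolongation of the Killing equation.

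\textbf{Step 1 (Killing fields as parallel sections).} In dimension $2$, a Killing vector field $K$ is determined by the $1$-jet $(K^i,\nabla_iK_j)$ at a point. Since $\nabla_iK_j$ is skew-symmetric, it is given by one scalar $\omega$. The Killing equation then closes into a linear system
$$\nabla K=\omega J,\qquad d\omega=-\tfrac{R}{2}\,K^\flat\circ J$$
(up to normalisation), where $J$ is the "rotation" tensor $\nabla_i K_j=\omega\,\epsilon_{ij}$ and $R$ is the scalar curvature. So Killing fields are exactly the parallel sections of a connection $D$ on the rank-$3$ bundle $E=TM\oplus\Lambda^2T^*M$. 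Local Killing fields on a connected simply connected open set therefore form a space of dimension at most $3$. A Killing field defined on an open subset of a simply connected domain extends uniquely, provided it extends at all, and it extends precisely along curves whose holonomy data allow it. The key point is that \emph{any} local parallel section defined on an open set $W$ extends to a parallel section on all of a simply connected $U\supset W$ if the curvature of $D$ is such that the local solution space has constant dimension on $U$.

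\textbf{Step 2 (Using the projective symmetries).} Take a small simply connected neighbourhood $U$ of the singular point $p$. Inside it, I would pick regular points and apply Theorem \ref{th.main.3}, which says all regular points are of the same type.
\begin{itemize}
\item If the type is constant curvature, then $R$ is constant on the dense regular locus, hence on $U$ by continuity. The metric then has constant curvature on $U$, and it has a $3$-dimensional Killing algebra.
\item Otherwise, near each regular point the Killing algebra is exactly $1$-dimensional, spanned by $\partial_y$ of the standard model. Moreover, $\mathfrak p(g)$ normalises the Killing algebra, since the bracket of a projective and a Killing field is Killing.
\end{itemize}

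I would then construct the Killing field directly from $\mathfrak p(g)$ restricted to $U$. In each standard model the Killing field $\partial_y$ is distinguished inside $\mathfrak p$:
\begin{itemize}
\item for $\dim\mathfrak p=2$ it spans the derived algebra $[\mathfrak p,\mathfrak p]$;
\item for the $\mathfrak{sl}_2$ / $\mathfrak{so}(3)$ cases it is the element singled out by the metric. One would identify it algebraically as the element $X\in\mathfrak p$ satisfying $L_Xg=0$.
\end{itemize}
By Remark \ref{rem.proj.v.f.standard}, projective fields on open subsets extend, and $\mathfrak p(g|_U)$ is finite-dimensional. Hence the Killing condition $L_Xg=0$ for $X\in\mathfrak p(g|_U)$ holds on $U$ as soon as it holds on an open subset, by continuity and density of the regular locus. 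So the job is to show that the Killing generator is a single element of $\mathfrak p(g|_U)$ restricted from $U$, rather than different elements on different regular components.

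\textbf{Step 3 (the main obstacle).} The difficulty is that the regular locus near $p$ may be disconnected, for instance when two components are separated by a curve through $p$. On each component $C_\alpha$, restriction from $U$ gives an injection $\mathfrak p(g|_U)\to\mathfrak p(g|_{C_\alpha})$, and the Killing line $\ell_\alpha\subset\mathfrak p(g|_U)$ might a priori depend on $\alpha$. For $\dim\mathfrak p=2$ I would argue as follows. Since the components share the same type by Theorem \ref{th.main.3}, the restriction is an isomorphism by dimension count, and every $\ell_\alpha$ equals the derived algebra $[\mathfrak p(g|_U),\mathfrak p(g|_U)]$, which is independent of $\alpha$. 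For $\dim\mathfrak p=3$ I would use the invariant quadratic form on $\mathfrak p$ built from $g$, for example via the Killing-form pairing together with the conserved integral $g(K,\cdot)$. Failing that, I would use the observation that any $X\in\mathfrak p(g|_U)$ with $L_Xg=0$ on one $C_\alpha$ has $L_Xg$ real-analytic along $\mathfrak p$-orbits, hence $L_Xg=0$ identically. This is the step I expect to need genuine case-by-case work with the explicit bases of Remark \ref{rem.proj.v.f.standard}.
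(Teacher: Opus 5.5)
\textbf{There is a genuine gap, and it sits exactly in the one case that needs work: type \ref{case2c}.}

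\textbf{What works.} Your reduction is fine when the projective algebra of a regular piece $C$ is $2$-dimensional, i.e.\ for types \ref{case1a} and \ref{case1b}. There the injection $\mathfrak p(g|_U)\to\mathfrak p(g|_C)$ is an isomorphism, so the Killing field of $C$ is the restriction of some $X\in\mathfrak p(g|_U)$. Proposition \ref{homo} then makes $X$ Killing on all of $U$, which is also what the paper does. Types \ref{case1c}, \ref{case2a}, \ref{case2b} have no singular points by Proposition \ref{impossible}. Note that by the same Proposition \ref{homo}, the dependence of $\ell_\alpha$ on the component, which you single out as the main obstacle, is a non-issue.

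\textbf{Where it fails.} In type \ref{case2c} one has $\mathfrak p(g|_C)\cong\mathfrak{sl}_2(\R)$ for both signs of $\varepsilon_1\varepsilon_2$; it is not $\mathfrak{so}(3)$ (cf.\ Example \ref{ex.1}). A priori you only know $\dim\mathfrak p(g|_U)\geq 2$. If it equals $3$ your argument goes through, but if it equals $2$, its image in $\mathfrak p(g|_C)$ is a Borel subalgebra, and nothing forces it to contain $\partial_y$.
\begin{itemize}
\item For $\varepsilon_1\varepsilon_2=1$, $\partial_y$ is elliptic and lies in \emph{no} $2$-dimensional subalgebra.
\item The derived algebra of any $2$-dimensional subalgebra of $\mathfrak{sl}_2(\R)$ is spanned by a nilpotent element, whereas $\partial_y$ is elliptic or hyperbolic in model \ref{case2c}. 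So your derived-algebra recipe would output a non-Killing field.
\item Step 3 silently assumes $\ell_\alpha\subset\mathfrak p(g|_U)$, i.e.\ that projective fields of the regular pieces extend across the singular locus. Remark \ref{rem.proj.v.f.standard} only gives extension inside a standard model, and proving $\dim\mathfrak p(g|_U)=3$ near $p$ directly amounts to the full classification.
\end{itemize}
Separately, $\mathfrak p$ does not normalise the Killing algebra here: $[\partial_y,-x\cos y\,\partial_x+\sin y\,\partial_y]=x\sin y\,\partial_x+\cos y\,\partial_y$.

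\textbf{The missing idea.} You need an extension mechanism for Killing fields that bypasses projective fields entirely. Your Step 1 sets this up but never uses it. Its claim that the local solution space has constant dimension is precisely what must be proved at $p$. The paper does this with the Kostant connection, in two cases.
\begin{itemize}
\item Where $dR\neq 0$: differentiating the equation for $d\omega$ and skew-symmetrising gives the pointwise linear system \eqref{eq:system} in $(Y,\omega)$, of rank at least two. Killing fields exist on a dense set, so its determinant vanishes identically. Hence all parallel sections lie in a canonical line subbundle $L$. $\widetilde\nabla$ preserves $L$ and is flat on it, being flat on a dense set. This yields a nowhere vanishing Killing field near $p$ (Lemma \ref{prop.principal}).
\item Where $dR=0$: at singular points of a \ref{case2c}-manifold one has $I=0$, so $dR=0$ whenever the metric is definite, and possibly also in the Lorentzian case. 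Such a point is a Morse critical point of $R$ by \eqref{eq:morse}. Lemma \ref{prop.killing.morse} then produces a Killing field vanishing only at $p$. It does so by $\widetilde\nabla$-parallel transport along a family of curves, using uniqueness of parallel sections in $L$ away from $p$ and continuity.
\end{itemize}
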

%
%
%

%
\subsubsection{Projective vector fields of metrics of Theorem \ref{th.main} and geometric characterization of the singular locus}\label{sec.proj.v.f.standard.2}

Using Theorems \ref{th.main} and \ref{th.main.2}, we can provide the following geometric characterization of singular points: a point $p$ of a $2$-dimensional pseudo-Riemannian manifold $(M,g)$ with $\dim\mathfrak{p}(g)\geq 2$ is singular if either the Lie algebra action of $\mathfrak{p}(g)$ on $M$ is singular, i.e.,  the dimension of the vector distribution on $M$ associated to $\mathfrak{p}(g)$ drops at $p$, or the Killing vector field (whose existence is guaranteed by Theorem \ref{th.main.2}) has length equal to zero at $p$.

\smallskip\noindent
For the sake of completeness, we list below the projective vector fields, including in particular the Killing vector fields, of the metrics \ref{case1a.main}-\ref{case2c.singular.bis} in Theorem \ref{th.main}. As mentioned above, the method used to obtain them is briefly described in Section \ref{sec.proj.conn}. This will allow us to provide, at the end of this section, a precise case-by-case description of the singular locus.


\begin{itemize}
\item a Killing vector field of metrics \ref{case1a.main} is $x^{h+2}\partial_x+(h+1)\partial_y$;
\item a Killing vector field of metrics \ref{case1b1.main}-\ref{case1b2bis.main}   is $\partial_y$;
\item a Killing vector field of metrics \ref{case2c.singular} is  $y\partial_x-x\partial_y$;
\item a Killing vector field of metrics \ref{case2c.singular.bis} is  $y\partial_x+x\partial_y$;
\end{itemize}

\medskip\noindent
Below we give additional projective vector fields, that, together with the Killing vector fields listed above, form a basis of $\mathfrak{p}(g)$ casewise.
\begin{itemize}
\item an additional projective field of metrics \ref{case1a.main} is $-\frac{x}{h+1}\partial_x+y\partial_y$, that turns out to be a homothety;
\item an additional projective field of metrics \ref{case1b1.main} is $-\frac{2x}{h}\partial_x+y\partial_y$;
\item an additional projective field of metrics \ref{case1b1bis.main} is $-\frac{x}{h}\partial_x+y\partial_y$;
\item an additional projective field of metrics \ref{case1b2.main} is $\frac{2\veps}{h+2}x\partial_x + \left(\displaystyle\int\frac{dx}{(1+x^{h+2})^{\frac32}} - 2\veps\frac{h+1}{h+2}y\right)\partial_y$;
\item an additional projective field of metrics \ref{case1b2bis.main} is $-\frac{\veps}{h+1}x\partial_x + \left(\displaystyle\int\frac{dx}{(1-x^{2h+2})^{\frac32}} +\veps\frac{2h+1}{h+1}y\right)\partial_y$;
\item additional projective fields of metrics \ref{case2c.singular}-\ref{case2c.singular.bis} are $y\partial_x$ and $x\partial_x-y\partial_y$;
%
%
\end{itemize}
We are now in a position to provide a precise description of the singular locus of metrics \ref{case1a.main}-\ref{case2c.singular.bis} of Theorem \ref{th.main}.

\begin{itemize}
\item[-] The singular locus of metrics \ref{case1a.main}-\ref{case1b2bis.main} is described by $x=0$, on which the projective vector fields become collinear: indeed, on $x=0$ they span a $1$-dimensional distribution whereas outside a $2$-dimensional one. 

\item[-]
The singular locus of metrics \ref{case2c.singular} is the origin: at this point all projective vector fields vanish, whereas on the complementary set they span a $2$-dimensional distribution. 

\item[-]
The singular locus of metrics \ref{case2c.singular.bis} is described by $(x-y)(x+y)=0$, on which the length of the Killing vector field vanishes. Also, all projective vector fields vanish at the origin, whereas outside they span a $2$-dimensional distribution.


\end{itemize}

\section{Proof of Theorems \ref{th.main}, \ref{th.main.3} and \ref{th.main.2}}\label{sec.proof}

\subsection{Ideas, techniques and scheme of the paper}\label{sec.outline}

Let $(M,g)$ be a pseudo-Riemannian manifold with $\dim\mathfrak{p}(g)\geq 2$. In \cite{BMM}, it was shown that the set of regular points of $(M,g)$ is dense. Thus, a natural way to construct examples of metrics admitting two or three projective vector fields near singular points is to glue neighbourhoods endowed with the metrics appearing in Theorem \ref{th.Bryant.Manno.Matveev} into a larger neighbourhood.

\noindent
Because of the uniqueness statement in Theorem \ref{th.Bryant.Manno.Matveev}, the neighbourhoods cannot overlap; rather, they are expected to be glued along a kind of boundary. In this setting, the natural candidate for such a boundary is the set of points at which the metrics cease to be defined, namely the points where the denominator in the formulas for the metrics  in Theorem \ref{th.Bryant.Manno.Matveev} vanishes, together with the points at infinity, that is, points for which either the coordinate $x$ or the coordinate $y$ tends to infinity. In the proof of Theorem \ref{th.main}, we essentially show that all metrics described there arise in this way. However, such an analysis is not straightforward, since, for instance, one has no a priori control over the geometry or topology of the singular locus, nor over the possible adjacencies between regions corresponding to different standard models. In this regard, it is worth stressing that most parameters of a standard model are continuous, yielding a priori infinitely many possible gluings between different models. Furthermore, another difficulty is that, a priori, it is not clear whether, in the case $\dim\mathfrak p(g)= 2$, there exists a neighbourhood $U\subset M$ such that $\dim\mathfrak p(g|_U)> 2$.

\smallskip\noindent
To gain a better and deeper understanding of the main ideas, strategies and difficulties underlying the proofs of Theorems \ref{th.main}, \ref{th.main.3} and \ref{th.main.2}, we need the following definitions, which play a key role in the present paper.
\begin{Def}\label{def:modelled}
Let $(M,g)$ be a pseudo-Riemannian manifold with $\dim\mathfrak p(g)\geq 2$.
A geodesic $\gamma:[0,1]\to M$ 
will be referred to as \emph{modelled geodesic} if there exists a neighbourhood $U$ of $M$ containing $\gamma(I)$, with $I=(0,1)$, and an isometry from $U$ into a neighbourhood of a standard model  $(\mathcal M,\gst)$.
\end{Def}
We will see (cf. Proposition \ref{lem:no-interior-accumulation}) that the image of the interior of a modelled geodesic issuing from a singular point, under the isometry of Definition \ref{def:modelled}, is a geodesic of a standard model that must either converge to the boundary of the standard model or escape every compact subset within finite affine parameter. This justifies the following definition.
\begin{Def}\label{def:gast}
Let $(\mathcal M,\gst)$ be a standard model.
A geodesic $\gast:(0,1)\to \mathcal M$ 
will be referred to as \emph{suspicious geodesic} if, as $t\to 0^+$, either $\gast(t)$ tends to $\partial \mathcal M$ or $\gast(t)$ is unbounded.
\end{Def}
Thus, under the isometry of Definition \ref{def:modelled}, the interior part of every modelled geodesic issuing from a singular point gives rise to a suspicious one; however, not every suspicious geodesic is obtained in this way.

\smallskip
We observe that modelled geodesics issuing from regular points always exist:
a first problem is to determine under which conditions a modelled geodesic issuing from a given singular point exists. After some preliminary results, presented in Section \ref{sec.proj.conn}, we address this problem in Section \ref{sec:modelled.geod}, whose main outcomes are Propositions \ref{prop:admissible.arc} and \ref{prop:admissible.arc.2}. Proposition~\ref{prop:admissible.arc} shows that any geodesic starting at a singular point and remaining in the regular locus thereafter is modelled. Proposition~\ref{prop:admissible.arc.2}, in turn, ensures that any geodesic issuing from a singular point and meeting the regular locus contains a modelled geodesic arc. The situation described above is summarized in Figure \ref{fig.mod.geod} below.

\begin{figure}[H]
\begin{center}
\includegraphics[width=0.5\textwidth]{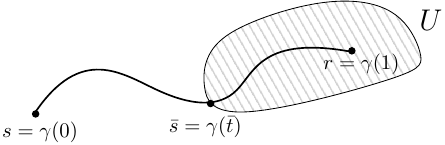}
\caption{This picture summarizes the content of Propositions \ref{prop:admissible.arc} and \ref{prop:admissible.arc.2}. A geodesic curve $\gamma:[0,1]\to M$, connecting a singular point $s=\gamma(0)$ to a regular point $r=\gamma(1)$, contains a geodesic arc $\gamma(t)$, $\bar t \leq t \leq 1$, which, after reparametrizing $[\bar t,1]$ onto $[0,1]$, is a modelled geodesic issuing from the singular point $\bar s=\gamma(\bar t)$ in view of Proposition  \ref{prop:admissible.arc}. In other words, there exists a neighbourhood $U\subset M$ of this arc, with $\bar s$ removed, that is isometric to a neighbourhood in a standard model. However, the distribution of singular points on the arc $(s,\bar{s})$ can be determined only by a further analysis carried out in Section \ref{sec.regular.singular}.}\label{fig.mod.geod}
\end{center}
\end{figure}

\vspace{-0.3cm}

The proofs of Proposition \ref{prop:admissible.arc} and  \ref{prop:admissible.arc.2} are divided into three steps. First, in Section \ref{sec.extension.iso}, we establish a propagation result for local isometries along the flows of projective vector fields. Second, in Section \ref{sec.obstruction}, we analyse the only obstruction to applying
this propagation argument along a geodesic, namely the possible degeneration of a $2$-dimensional projective subalgebra to a $1$-dimensional distribution. This leads us to introduce and study the \emph{non-transitivity locus} of  $2$-dimensional projective subalgebra, that allows us to control the aforementioned degeneration phenomenon. Finally, in Section \ref{sec.prop.3.4}, we combine these ingredients to prove Propositions \ref{prop:admissible.arc} and \ref{prop:admissible.arc.2}.


\smallskip
Let $\gamma$ be a modelled geodesic arc issuing from a singular point, for instance, the geodesic arc $[\bar{s},r]$ of Figure \ref{fig.mod.geod}.
After identifying a neighbourhood $U$ of $\gamma$ with a neighbourhood of a standard model $(\mathcal{M},\gst)$ via the isometry constructed in Proposition \ref{prop:admissible.arc.2}, the behaviour of the metric 
near the singular point $\gamma(0)$ can be understood by studying the
image of $\gamma$ (via this isometry) that, as we said, is a suspicious geodesic in view of Proposition \ref{lem:no-interior-accumulation}.  

\smallskip
In Section \ref{sec.behaviour}, based on the above discussions, we provide a tool to verify necessary conditions for a suspicious geodesic $\gast$ to come from a modelled one issuing from a singular point. More precisely, one has to check that metric invariants such as the scalar curvature $R$, its length $I$ and its laplacian $\Delta R$ be bounded along such geodesics.

\smallskip
In Section \ref{sec.extendability} the above reasoning is used to select the candidate standard metrics that can be glued in a larger domain containing singular points. Indeed, referring to Figure \ref{fig.mod.geod}, we see when the aforementioned metric invariants explode when approaching the singular point $\bar{s}$. In particular, as a first step, in Section \ref{sec.1c.2a.2b.not} we see that at least one of them is unbounded for metrics \ref{case1c}, \ref{case2a} and \ref{case2b} of Theorem \ref{th.Bryant.Manno.Matveev}. Therefore, if $(M,g)$ contains regular points of type  \ref{case1c}, \ref{case2a} or \ref{case2b}, then it cannot contain singular points.  Actually a stronger statement holds: once regular points of one of the types \ref{case1c}, \ref{case2a}, or \ref{case2b} occur, no regular points of any other type can occur on the same manifold, cf. Proposition \ref{impossible}. 
The remainder of Section \ref{sec.extendability} is devoted to the cases not ruled out by the obstruction described above. More precisely, in Sections \ref{sec:1a}, \ref{sec:1b}, and \ref{sec:2c}, we consider, respectively, the cases in which $(M,g)$ contains regular points of type \ref{case1a}, \ref{case1b}, and \ref{case2c}. In each case, the strategy is the same: we analyse which suspicious geodesics may represent modelled geodesics approaching a singular point. In particular, the boundedness of metric invariants, together with the requirement that the affine parameter remain finite, yields necessary restrictions on the parameters of the standard models which can occur near a singular point. 

In Section \ref{sec.regular.singular} we study more deeply the structure of the regular and singular locus of $(M,g)$.
Referring again to Figure \ref{fig.mod.geod}, one has to understand how singular points can be distributed along $\gamma$. For all admissible types, this analysis shows in particular that singular points are isolated along geodesics transverse to the singular locus. Concerning the regular locus, the main achievement is that, once regular points of one of the types \ref{case1a}, \ref{case1b}, or \ref{case2c} occur, no regular points of any other type can occur on the same manifold, see the end of Section \ref{sec:regular.locus.2c}. This analysis, together with the results contained in Proposition \ref{impossible}, enters the proof of Theorem \ref{th.main.3}.
Moreover, while the existence of a nontrivial Killing vector field near regular points follows from the standard models, near singular points it is obtained through this analysis together with the results contained in the Appendix, leading to the completion of the proof of Theorem \ref{th.main.2} at the end of Section \ref{sec:singular.locus.2c}. 
This analysis also shows that the singular locus is organized into the union of integral submanifolds of the Killing vector field, in particular, it can be a point, a curve or a union of two curves intersecting at a single point. Then, a natural problem is to see what happens 
when two regular regions meet along the singular locus: we study it by comparing smooth geometric quantities across such locus. This yields additional compatibility conditions on the parameters of the models on either side of the gluing.

In Section \ref{sec:final.th.main}, taking into account that the structure of the singular locus is understood, we derive the local normal forms of Theorem \ref{th.main}.
The preceding analysis provides the natural geometric data for this purpose: the Killing vector field, the modelled geodesic arcs issuing from the singular locus and the restrictions on the parameters of regular points. 
We are then naturally led to consider some coordinates reflecting this geometric structure.
In such coordinates, the regular pieces adjacent to the singular locus can be compared explicitly, and the compatibility conditions obtained above ensure that the metric extends smoothly across the singular locus. 

\subsection{Some preliminary known facts we will use in the proof}\label{sec.proj.conn}

It is well known, at least since Beltrami \cite{Beltrami},
that geodesics of a connection
$\Gamma=\Gamma_{ij}^k$ on a coordinate neighbourhood of $\R^2(x,y)$ and solutions of the ODE
\begin{equation}\label{eq.proj.conn.associated}
y_{xx}=\underbrace{-\Gamma^2_{11}}_{F^0} + \underbrace{(\Gamma^1_{11}-2\Gamma^2_{12})}_{F^1}y_x
+\underbrace{(2\Gamma^1_{12}-\Gamma^2_{22})}_{F^2}y_x^2 +\underbrace{ \Gamma^1_{22}}_{F^3} y_x^3,
\end{equation}
are closely related: namely, for every solution $y(x)$ of \eqref{eq.proj.conn.associated} the curve
$(x,y(x))$ is a reparametrised geodesic. Equation \eqref{eq.proj.conn.associated} is classically  called the
\emph{projective connection} associated with the connection $\Gamma$.
Conversely, in a local coordinate system,
any choice of  functions $F^0$, $F^1$, $F^2$ and $F^3$  determines a class of connections having the same (unparametrised) geodesics.
Every projective vector field of the connection $\Gamma$ is a \emph{point
symmetry} of the ODE \eqref{eq.proj.conn.associated}, and vice
versa. The vector field 
$$
X=X^1\partial_x+X^2\partial_y
$$ 
is a point symmetry of \eqref{eq.proj.conn.associated} if and only if its components satisfy the PDEs system
\begin{equation}\label{eq:sys.proj.fields}
\left.
\begin{array}{r}
X^2_{xx} -2F^0X^1_x -F^1X^2_x +F^0X^2_y -F^0_xX^1 -F^0_yX^2=0
\\[0.2cm]
X^1_{xx} -2X^2_{xy} +F^1X^1_x +3F^0X^1_y +2F^2X^2_x +F^1_xX^1 +F^1_yX^2=0
\\[0.2cm]
2X^1_{xy} -X^2_{yy} +2F^1X^1_y +3F^3X^2_x +F^2X^2_y +F^2_xX^1 +F^2_yX^2=0
\\[0.2cm]
X^1_{yy} -F^3X^1_x +F^2X^1_y +2F^3X^2_y +F^3_xX^1 +F^3_yX^2=0
\end{array}
\right\}
\end{equation}
that is of finite type with space of solutions  at most $8$-dimensional (see Section 2.2.1 of \cite{BMM}).

\smallskip\noindent
Focusing on the standard models $(\mathcal{M},\gst)$, direct computations based on formula \eqref{eq.proj.conn.associated} show that the following ODE 
\begin{equation}\label{eq.ref.proj.conn.2}
y_{xx} = \left(1-\frac12 b\right)y_x-\frac12 \varepsilon_1\varepsilon_2b e^{-2x}y_x^3
\end{equation}
is the projective connection associated to the Levi-Civita connection
\begin{itemize}
\item of both the metrics \ref{case1a} and \ref{case1b} for $b\notin\{-2,0,1\}$ of Theorem \ref{th.Bryant.Manno.Matveev};
\item of both the metrics \ref{case2a} and \ref{case2b} for $b=1$ of Theorem \ref{th.Bryant.Manno.Matveev};
\item of the metric \ref{case1c} of Theorem \ref{th.Bryant.Manno.Matveev} (putting in this case $\varepsilon_1\varepsilon_2=\varepsilon$) for $b=-2$.
\end{itemize}
Instead, the projective connection associated to the Levi-Civita connection of metrics \ref{case2c} of Theorem \ref{th.Bryant.Manno.Matveev} is independent of the constant $c$ and it is equal to
\begin{equation}\label{eq.ref.proj.conn.3}
y_{xx}= -\frac{3}{2x}y_x - \frac12\varepsilon_1 x(\varepsilon_2-2x^2)y_x^3\,.
\end{equation}
Point symmetries of \eqref{eq.ref.proj.conn.2} and \eqref{eq.ref.proj.conn.3} can be computed by an explicit integration of System \eqref{eq:sys.proj.fields}, thus obtaining
the projective vector fields listed in Remark \ref{rem.proj.v.f.standard}: we note that in this case any local solution of System \eqref{eq:sys.proj.fields} is a restriction of a solution defined on the whole of $\mathcal{M}$. 

\smallskip\noindent
By similar reasonings we arrive also at the list of projective vector fields given in Section \ref{sec.proj.v.f.standard.2}.


\medskip
Equation \eqref{eq.proj.conn.associated} implies that two connections $\Gamma$ and $\bar \Gamma$ on the same manifold are projectively equivalent if and only if  they correspond to the same  ODE \eqref{eq.proj.conn.associated}, i.e., if
$$
-\Gamma^2_{11} =  F^0 = -\bar \Gamma^2_{11}, \   \Gamma^1_{11}-2\Gamma^2_{12}= F^1 = \bar \Gamma^1_{11}-2\bar \Gamma^2_{12}, \  2\Gamma^1_{12}-\Gamma^2_{22}= {F^2} =2\bar \Gamma^1_{12}-\bar \Gamma^2_{22}, \
 \Gamma^1_{22}= F^3= \bar \Gamma^1_{22}.
$$
Thus, in  a local coordinate system, a class of projectively equivalent connections is essentially the same as ODE \eqref{eq.proj.conn.associated}, i.e., is uniquely determined by  4 functions $F^0,...,F^3$.

A classical question is to understand which connections in a given projective class
are Levi-Civita connections of some metric.
The reformulation of this question as the existence of a (non-degenerate) solution of a linear system of PDEs is again classical and is due to
Liouville \cite{liouville} (see also Lemma $5$ of \cite{BMM}): he has shown that the Levi-Civita connection of a metric $g$ lies in a given projective class if and only if the entries $a_{11}(x,y), a_{12}(x,y), a_{22}(x,y)$ of the symmetric
matrix 
\begin{equation}\label{a.from.g}
a= \begin{pmatrix}a_{11} &a_{12} \\ a_{12}  &a_{22}\end{pmatrix} = \det(g)^{-\frac23}
 g
\end{equation}
satisfy the following system:
\begin{equation}\label{lin1}
\left.
\renewcommand{\arraystretch}{1.5}
\begin{array}{rcc}
{a_{11}}_x-\tfrac{2}{3}\,F^1\,a_{11} +2\,F^0\,a_{12}&=&0\\
{a_{11}}_y+2\,{a_{12}}_x
-\tfrac{4}{3}\,F^2\,a_{11}+\tfrac{2}{3}\,F^1\,a_{12}+2\,F^0\,a_{22}&=&0\\
2\,{a_{12}}_y+{a_{22}}_x
-2\,F^3\,a_{11}-\tfrac{2}{3}\,F^2\,a_{12}+\tfrac{4}{3}\,F^1\,a_{22}&=&0\\
{a_{22}}_y-2\,F^3\,a_{12}+\tfrac{2}{3}\,F^2\,a_{22} &=&0
\end{array} \right\}\,.
\end{equation}
Note that the matrix \eqref{a.from.g} is non-degenerate, and that  the transformation
  $g\mapsto a=  \det(g)^{-\frac23}
 g$  is invertible: the inverse transformation is given by 
\begin{equation*}
a\mapsto g= \det(a)^{-2}a.
\end{equation*}  
Thus, non-degenerate solutions of \eqref{lin1} are in 1:1 correspondence with metrics whose Levi-Civita connection lies in the  projective class used to construct system \eqref{lin1}. 
Note that \eqref{a.from.g} can be viewed as a section of the tensor bundle
$$
S^2T^*M  \otimes \mathrm{Vol}^{-\frac43}\,,
$$
where $S^2T^*M$ is the bundle of symmetric $(0,2)$-tensors on $M$ and
 $\mathrm{Vol}^{-\frac43}$ denotes the $1$-dimensional bundle of volume forms $\Lambda^2T^*M$ of $M$ with weight $-\frac43$ (we assume that the manifold is oriented; in fact, since we are working locally, the problems related to the fact that there is no trivialization of the bundle of the volume forms do not appear). 
%
%
%
%
Furthermore, the Lie derivative $\mathcal{L}_Xa$ of a section $a=\det(g)^{-\frac23}g$ of $S^2T^*M  \otimes \mathrm{Vol}^{-\frac43}$ along a vector field $X$ is a well-defined section of $S^2T^*M  \otimes \mathrm{Vol}^{-\frac43}$:
\begin{equation*}
\mathcal{L}_Xa=\det(g)^{-\frac23}\left(\mathcal{L}_Xg-\frac23\mathrm{trace}_g(\mathcal{L}_Xg)\,g\right)\,.
\end{equation*}
In particular, if the vector field $X$ is projective and  $a=(a_{ij})$ is  a solution of \eqref{lin1}, then $\mathcal{L}_X a$  is also a solution of \eqref{lin1} as such system  depends on the projective connection only and it is therefore projectively invariant.

Let us also note that the system \eqref{lin1} is an overdetermined linear PDE-system of finite type  (it closes after two prolongations and its second prolongation can be viewed as a particular linear connection on a certain $6$-dimensional vector bundle, in the sense that parallel sections of this connection are in 1:1 correspondence with the solutions of \eqref{lin1}, see \cite{eastwood}) which implies some useful properties, contained in the next two propositions.
\begin{Prop}\label{prop:solutions}
The space of solutions of \eqref{lin1} is a  finite-dimensional (in the present case, at most,  6-dimensional)  vector space, in particular, any solution $a=(a_{ij}):\Omega \subseteq \R^2\to \R^3$ is completely determined by the second-order jet in an arbitrary point $p\in\Omega$. Hence,  if a solution of \eqref{lin1} vanishes on a neighbourhood, it vanishes everywhere.  Consequently, if two solutions of \eqref{lin1} coincide on a neighbourhood, they coincide everywhere.
\end{Prop}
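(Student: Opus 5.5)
The plan is to prolong system \eqref{lin1} until it closes into a linear connection on a finite-rank vector bundle. Then I will use the standard fact that a parallel section is determined by its value at one point, together with connectedness of $\Omega$ (we work on connected domains, following the conventions of the paper).

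\textbf{Prolongation.} From the four equations of \eqref{lin1} we can express
\[
{a_{11}}_x,\quad {a_{22}}_y,
\]
together with the combinations ${a_{11}}_y+2{a_{12}}_x$ and $2{a_{12}}_y+{a_{22}}_x$, linearly in terms of $a$. Hence among the six first derivatives only two are free; one may take ${a_{12}}_x$ and ${a_{12}}_y$. Differentiating once more and using commutation of mixed partials, every second derivative of $a$ is expressed through $a$, these two free first derivatives, and one further free second-order quantity $\mu$, a suitable combination such as ${a_{12}}_{xy}$ modulo lower-order terms. A third differentiation then expresses the derivatives of $\mu$ in terms of $(a,{a_{12}}_x,{a_{12}}_y,\mu)$. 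This is the classical closure ``after two prolongations'' recalled above, cf. \cite{eastwood}. We obtain a rank-$6$ bundle $E$ with fibre coordinates
\[
\Phi=(a_{11},a_{12},a_{22},{a_{12}}_x,{a_{12}}_y,\mu),
\]
and a linear system
\[
\Phi_x=A(x,y)\Phi,\qquad \Phi_y=B(x,y)\Phi,
\]
with $A,B$ smooth and built from $F^0,\dots,F^3$ and their derivatives. Solutions of \eqref{lin1} correspond bijectively to solutions $\Phi$ of this system. The step I expect to be the main obstacle is verifying explicitly that the third-order derivatives close without producing new unknowns. Since this is the cited result of \cite{eastwood} (see also \cite{BMM}), I will invoke it rather than recompute it.

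\textbf{Uniqueness.} Let $\Phi$ be parallel and suppose $\Phi(p)=0$. Join $p$ to any $q\in\Omega$ by a piecewise smooth curve $c(t)$. Along $c$, the section $\Phi$ satisfies the linear ODE
\[
\tfrac{d}{dt}\Phi(c(t))=\bigl(\dot c^1A+\dot c^2B\bigr)\Phi(c(t)).
\]
By uniqueness for linear ODEs, $\Phi\equiv0$ along $c$, so in particular $\Phi(q)=0$. Thus the linear map sending $\Phi$ to $\Phi(p)\in\R^6$ is injective. Consequently the solution space of \eqref{lin1} has dimension at most $6$, and a solution is determined by $\Phi(p)$, which is part of its $2$-jet at $p$.

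\textbf{Consequences.} If a solution vanishes on a neighbourhood, its $2$-jet vanishes at any point of that neighbourhood, so it vanishes everywhere by the uniqueness step. The last claim of the proposition follows by applying this to the difference of two solutions, using linearity of \eqref{lin1}.
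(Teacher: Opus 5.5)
Your proposal is correct and follows essentially the same route as the paper, which also rests (citing Eastwood) on the fact that the second prolongation of \eqref{lin1} closes to a linear connection on a rank-$6$ bundle whose parallel sections correspond to solutions. Your ODE-uniqueness argument along curves in the connected domain $\Omega$, and your count of free jet components (${a_{12}}_x$, ${a_{12}}_y$, and one second-order quantity equal to ${a_{12}}_{xy}$ modulo lower order), simply make explicit the standard consequences that the paper leaves implicit.
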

%
%
 \begin{Prop}\label{homo}
 If two projectively equivalent metrics are proportional with a constant coefficient
 on an open set,  then
they are proportional with the same constant coefficient on the whole manifold. Moreover, if a projective vector field $X$ of $(M,g)$ is a homothety (resp. Killing) in a neighbourhood, then it is a homothety (resp. Killing) on $M$.
\end{Prop}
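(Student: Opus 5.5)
The plan is to reduce both assertions of Proposition \ref{homo} to the unique continuation property of solutions of the linear system \eqref{lin1} stated in Proposition \ref{prop:solutions}. Throughout, $M$ is connected.

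\textbf{First assertion.} Let $g$ and $\bar g$ be projectively equivalent, with $\bar g = c\, g$ on an open set $U$ for a constant $c\neq 0$. Let $a=\det(g)^{-2/3}g$ and $\bar a=\det(\bar g)^{-2/3}\bar g$; since the metrics share a projective class, both are solutions of \eqref{lin1}.
\begin{itemize}
\item On $U$ we have $\det(\bar g)=c^2\det(g)$, hence $\bar a = c\,|c|^{-4/3} a = \lambda a$ with the constant $\lambda = c\,|c|^{-4/3}$.
\item Since \eqref{lin1} is linear, $\bar a-\lambda a$ is a solution vanishing on $U$. By Proposition \ref{prop:solutions} it vanishes on all of $M$.
\item Inverting via $g=\det(a)^{-2}a$ gives $\bar g = \det(\lambda a)^{-2}\lambda a = \lambda^{-3} g$ everywhere.
\item Finally, $\lambda^{-3} = c^{-3}|c|^{4} = c$, because $|c|^4=c^4$. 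So the proportionality constant is the same.
\end{itemize}

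\textbf{Second assertion.} Let $X$ be projective and a homothety on $U$, i.e. $\mathcal L_X g = 2\mu g$ on $U$ with $\mu$ constant.
\begin{itemize}
\item On $U$ the formula for $\mathcal L_X a$ gives $\mathcal L_X a = \det(g)^{-2/3}\bigl(2\mu g - \tfrac{2}{3}\cdot 4\mu\, g\bigr) = -\tfrac{2}{3}\mu\, a$ (in dimension $2$, $\mathrm{trace}_g(2\mu g)=4\mu$).
\item Since $X$ is projective on all of $M$, $\mathcal L_X a$ solves \eqref{lin1} globally, as noted before Proposition \ref{prop:solutions}. So $\mathcal L_X a + \tfrac23\mu a$ is a global solution vanishing on $U$, hence vanishing identically.
\item Conversely, $\mathcal L_X a = \nu a$ forces $\mathcal L_X g$ to be a multiple of $g$. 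Write $\mathcal L_X g = \phi g + T$ with $T$ trace-free. Then $\mathcal L_X a = \det(g)^{-2/3}\bigl(T - \tfrac13 \phi g\bigr)$, so $T=0$ and $\phi = -3\nu$ is constant.
\item Hence $\mathcal L_X g = 2\mu g$ on all of $M$, with the same constant $\mu$ as on $U$. Taking $\mu=0$ gives the Killing case.
\end{itemize}

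\textbf{Main obstacle.} The only delicate point is checking that the inversion $a\mapsto g$ and the Lie derivative formula on $\mathrm{Vol}^{-4/3}$-weighted tensors handle constants correctly. This is sign bookkeeping for $\lambda$ (e.g. negative $c$ in the Lorentzian case) and the trace coefficient $2/3$; the rest is immediate from Proposition \ref{prop:solutions}.
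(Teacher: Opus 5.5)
Your proof is correct and is essentially the argument the paper intends. The paper gives no separate proof of Proposition \ref{homo}; it derives it, together with Corollary \ref{cor.proj.field.vanishing}, from the finite-type unique-continuation property of \eqref{lin1} recorded in Proposition \ref{prop:solutions}. Your computations $\lambda=c\,|c|^{-4/3}$, $\lambda^{-3}=c$ and $\mathcal L_X a=-\tfrac23\mu a$, together with the trace/trace-free splitting for the converse, supply exactly those details. The only step you leave implicit is the routine open--closed argument on the connected manifold $M$, which globalizes the chartwise unique continuation.
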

\begin{Cor}\label{cor.proj.field.vanishing}
If a projective vector field $X$ of $(M,g)$ vanishes on a neighbourhood $U\subseteq M$ then it vanishes everywhere.
\end{Cor}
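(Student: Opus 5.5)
The plan is to show that a projective vector field vanishing on an open set is a Killing field there, and then to let Proposition \ref{homo} propagate the Killing property to all of $M$. On a Killing field that vanishes on an open set we can then use a rigidity argument for Killing fields. Let $X$ be projective on $(M,g)$ with $X|_U\equiv 0$. Then $\mathcal{L}_Xg=0$ on $U$, so $X$ is Killing on $U$, and Proposition \ref{homo} (in the Killing version) says that $X$ is Killing on all of $M$.

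The key step is the rigidity statement: a Killing vector field on a connected pseudo-Riemannian manifold is determined by its $1$-jet $(X(p),\nabla X(p))$ at any single point $p$. We use the standard identity $\nabla_i\nabla_j X_k = R_{lijk}X^l$ (up to sign conventions). It shows that the pair $(X,\nabla X)$, with $\nabla X$ skew, is parallel for a linear connection on the bundle $TM\oplus\Lambda^2T^*M$. A parallel section that vanishes at one point vanishes along every curve from that point, since it solves a linear ODE with zero initial data. Because $M$ is connected, and hence path-connected, the section vanishes identically.

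Applying this at a point $p\in U$, where $X(p)=0$ and $\nabla X(p)=0$ because $X$ vanishes on the neighbourhood $U$, gives $X\equiv 0$ on $M$.

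An alternative route avoids curvature: consider the set of points near which $X$ vanishes. It is open by definition. For closedness, use that the system \eqref{eq:sys.proj.fields} is of finite type, so a solution is determined by a finite jet, as in Proposition \ref{prop:solutions}. The main subtlety is the propagation across $M$, and it is handled either by the Killing-prolongation argument or by this finite-type argument together with connectedness. I would present the Killing-prolongation argument, since Proposition \ref{homo} already reduces the problem to Killing fields.
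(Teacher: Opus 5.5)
Your proof is correct and matches the argument the paper intends. The paper gives no separate proof and presents the statement as an immediate consequence of Proposition \ref{homo}; the natural way to fill it in is exactly yours: a field vanishing on $U$ is Killing there, hence Killing on all of $M$, and a Killing field whose $1$-jet vanishes at a point is identically zero because $(X,\nabla X)$ is parallel for the Kostant connection \eqref{kostant} recalled in the Appendix.
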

%
\begin{Rem}
Actually the above proposition is true in any dimension greater than $1$, as a projectively invariant linear system with similar properties as system \eqref{lin1} holds in arbitrary dimension, see \cite{eastwood}.
\end{Rem}

\weg{
Let us also note the relation between metrics  projectively equivalent to a  metric $g$  and integrals that are
 quadratic in velocities for the geodesic flow of $g$. They are closely related: if the $\bar g $ is projectively equivalent to $g$, then the function

\begin{equation}
F:TM\to \mathbb{R} \ , \ \ F(x,\xi)= \left(\frac{\det(g)}{\det(g)}\right)^{2/3} \bar g(\xi, \xi)
\end{equation}
 is an integral, see for example  \cite{MT}. Other way around, any quadratic in momenta function could be written as
 $F(x,\xi)=  F_{ij}(x) \xi^i \xi^j$, where $F_{ij}$ is a symmetric (0,2) tensor field. If the function is an integral for the geodesic flow of the metric $g$ and if the corresponding $F$ is non-degenerate at every point, then
the metric $\bar g_{ij} = \left(\frac{\det(g)}{\det(\bar g)}\right)^2     F_{ij}$ is projectively equivalent to $g$,
see \cite{MT,pucacco}.  Moreover, if $v$ is projective vector field, then the function
$$
F:TM\to \mathbb{R}\ , \ \   F(x,\xi):=({\cal L}_vg)(\xi,\xi)-\frac{2}{3} \frac{1}{\det(g)}d(\det(g))(v)g(\xi,\xi),
$$
is also an integral for the geodesic flow of $g$. }


\subsection{On the existence of modelled geodesics issuing from singular points}\label{sec:modelled.geod}

The main purpose of this section is to prove Proposition \ref{prop:admissible.arc.2}, concerning the existence of 
modelled geodesics issuing from singular points. 
This will play a key role in the analysis performed in Sections \ref{sec.behaviour}-\ref{sec.extendability}.

\medskip
As a first step, we recall that in all the standard models, the coordinate vector field $\partial_y$ is Killing (see Remark \ref{rem.proj.v.f.standard}). Hence, for any geodesic 
$$
\gast(t)=(x(t),y(t))
$$ 
of a standard model, 
the quantities $g\left(\dgast,\partial_y\right)$ and $g(\dgast,\dgast)$ are constant along $\gast$. 
Thus, we have that
\begin{equation}\label{syst.principal}
\gst_{22}(x)\,\dot y = C_1,
\qquad
\gst_{11}(x)\,\dot x^{\,2} + \gst_{22}(x)\,\dot y^{\,2} = C_2, \quad C_i\in\R\,.
\end{equation}
If $C_1=0$, then $\dot y\equiv 0$ and the geodesic is horizontal, i.e.\ $y=\mathrm{const}$.  In the case $C_1\neq 0$, eliminating $\dot x$ and $\dot y$ from \eqref{syst.principal}, $\dot x\neq 0$, yields
\begin{equation*}
\gst_{11}(x)+\gst_{22}(x)\,(y')^{2}
=
C\,\frac{\gst_{11}(x)\,\gst_{22}(x)}{C\,\gst_{22}(x)-1}\,,
\quad 
C:=\frac{C_2}{C_1^{2}},
\quad
y'=\frac{\dot y}{\dot x}\,.
\end{equation*}
As a consequence of the first equation of \eqref{syst.principal} we have the following lemma.
\begin{Lemma}\label{lemma.self.inters}
Let $\gast(t)=(x(t),y(t))$ be a geodesic of a standard model $(\mathcal{M},\gst)$. Then the function $y(t)$ is either a constant or strictly monotone. In particular, any geodesic of a standard model is not self-intersecting.
\end{Lemma}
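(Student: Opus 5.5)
The proof rests on the first equation of \eqref{syst.principal}, namely the conservation law $\gst_{22}(x(t))\,\dot y(t)=C_1$. This holds because $\partial_y$ is a Killing vector field of every standard model (Remark \ref{rem.proj.v.f.standard}), so $g(\dgast,\partial_y)$ is constant along any geodesic.

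First I show that $y(t)$ is constant or strictly monotone. Along $\gast$ the coefficient $\gst_{22}(x(t))$ never vanishes, since $\partial_y$ has nonzero length at every point of $\mathcal M$ (again Remark \ref{rem.proj.v.f.standard}). I split into two cases according to $C_1$.
\begin{itemize}
\item If $C_1=0$, then $\dot y\equiv 0$ on the whole interval, so $y$ is constant.
\item If $C_1\neq 0$, then $\dot y(t)=C_1/\gst_{22}(x(t))$ is a nowhere-vanishing continuous function on a connected interval. Hence it has constant sign, and $y(t)$ is strictly monotone.
\end{itemize}

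Next I show that $\gast$ does not self-intersect. Self-intersection here means $\gast(t_1)=\gast(t_2)$ with $t_1\neq t_2$ for a non-constant geodesic; constant curves are excluded because geodesics have nonzero velocity.
\begin{itemize}
\item In the strictly monotone case, $y(t_1)\neq y(t_2)$ whenever $t_1\neq t_2$, so $\gast$ is injective.
\item In the constant case, $\gast$ lies on a horizontal line $y=y_0$, and $\dot x$ never vanishes because $\dot\gamma\neq 0$. Thus $x(t)$ is strictly monotone and $\gast$ is again injective.
\end{itemize}
One subtlety remains: for a null geodesic with $C_1=0$, a reparametrization issue could arise in principle. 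It does not, because a geodesic is an immersed curve, so $\dot x\neq0$ whenever $\dot y\equiv 0$.

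I do not expect any real obstacle. The only point requiring care is justifying that $\gst_{22}$ never vanishes along the curve, and this is exactly the nonvanishing of the length of $\partial_y$ noted in Remark \ref{rem.proj.v.f.standard}. It can also be read off directly from the explicit formulas in Theorem \ref{th.Bryant.Manno.Matveev}, on the domain where the metric is defined and nondegenerate.
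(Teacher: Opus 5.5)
Your proof is correct and follows the paper's route. The paper states the lemma as a direct consequence of the conservation law $\gst_{22}(x)\,\dot y=C_1$ together with the nonvanishing of $\gst_{22}$ on $\mathcal M$, exactly as you argue. Your extra observation for the horizontal case $C_1=0$ (nonvanishing $\dgast$ forces $x(t)$ to be strictly monotone) supplies a detail the paper leaves implicit in the non-self-intersection claim.
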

%
%
%
A direct computation shows another property of standard models $(\mathcal{M},\gst)$, contained in the following lemma.
\begin{Lemma}\label{lemma.iso}
Any isometry between open subsets $V$ and $V'$ of the same standard model $(\mathcal{M},\gst)$ is of the form
$$
(x,y)\in V\to\big(x,\epsilon y+k\big)\in V'\,,\quad \epsilon\in \{-1,1\}\,, \,\,k\in\R\,.
$$
In particular, it sends bounded open subsets into  bounded open subsets (in the Euclidean sense).
\end{Lemma}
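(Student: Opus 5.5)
Any isometry $\varphi:V\to V'$ preserves every isometry invariant, and also sends Killing vector fields to Killing vector fields. The proof will use both facts. The approach is to show that $\varphi$ preserves the coordinate $x$, and then that it acts on $y$ by a translation or a reflection.

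\textbf{Step 1 (Killing field).} By Remark \ref{rem.proj.v.f.standard}, the Killing vector fields of $\gst|_V$ form the one-dimensional space spanned by $\partial_y$, restricted to $V$; this uses $\mathfrak{p}(\gst|_V)=\mathfrak{p}(\gst)|_V$ together with Proposition \ref{homo}. The same holds on $V'$. Hence $\varphi_*\partial_y=\lambda\,\partial_y$ for some constant $\lambda\neq 0$, at least on each connected component. Since $\varphi$ is an isometry, $\gst(\partial_y,\partial_y)\circ\varphi^{-1}=\lambda^2\,\gst(\partial_y,\partial_y)$.

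\textbf{Step 2 ($x$ is preserved).} The scalar curvature $R$ of a standard model depends on $x$ only, because $\partial_y$ is Killing. Let $\varphi(x,y)=(X,Y)$. Then $R(X)=R(x)$. Direct computation for each of the models \ref{case1a}--\ref{case2c} should show that $R$ is strictly monotone in $x$, or at least locally injective away from isolated critical values. Where $R$ fails to be injective, I would use a second invariant instead, such as $|\nabla R|^2$ or $\gst_{22}=|\partial_y|^2$, paired with $R$. From this I expect $X=x$, possibly together with $\lambda=\pm1$. More simply, $X$ is a function of $x$ alone, since $\varphi$ commutes with the flow of $\partial_y$ up to scaling. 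Comparing $\gst_{22}(X)=\lambda^2\gst_{22}(x)$ with $R(X)=R(x)$ then forces $X=x$ and $\lambda^2=1$. This step is the main obstacle: it requires a case-by-case check, over the parameters $b,\varepsilon_i,a,c$, that the pair $(R,\gst_{22})$ separates the values of $x$. The most delicate case is \ref{case2c}, where $R$ may be constant-like or non-monotone. There I would rely on the fact that $\gst$ has non-constant curvature (else the point would be of constant curvature type), and on the unboundedness and explicit rational form of the invariants.

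\textbf{Step 3 (form in $y$).} With $X=x$ and $\varphi_*\partial_y=\epsilon\partial_y$, $\epsilon=\pm1$, we get $\partial_y Y=\epsilon$ and $\partial_yX=0$. The map also preserves $\gst_{11}dx^2$, and $\partial_xY$ must vanish because there is no cross term in the metric. Therefore $Y=\epsilon y+k$ on each connected component; if $V$ is connected, $k$ is a single constant. The boundedness claim then follows at once, since $(x,y)\mapsto(x,\epsilon y+k)$ is a Euclidean isometry of $\R^2$.
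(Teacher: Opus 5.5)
Your outer framework is sound: $\varphi_*\partial_y=\lambda\partial_y$ gives $X=X(x)$ and $\partial_yY=\lambda$, and once $X=x$ is known, Step~3 finishes the argument. But Step~2 carries the whole content of the lemma, and you only announce it (``should show'', ``I expect''). The mechanism you propose also does not work as stated. Since $\gst_{22}$ is transported only up to the unknown factor $\lambda^2$, the pair $(R,\gst_{22})$ cannot separate $x$-values pointwise, and nothing you write forces $X=x$.

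The fact you are missing is that $R'$ has no zeros on $\pi_x(\mathcal M)$. For instance:
\begin{itemize}
\item $R'=-\frac{\veps_2 b(b+2)}{a}e^{-(b+2)x}(e^{bx}+\veps_2)$ for \ref{case1b};
\item $R'=\frac{2x e^{-2x}}{a}$ for \ref{case1c};
\item $R'=\frac{6(2x^2+cx+\veps_2)}{a}$ for \ref{case2c};
\item similarly for \ref{case1a}, \ref{case2a}, \ref{case2b}.
\end{itemize}
In every case the zeros lie exactly on $\partial\mathcal M$. Hence $R$ is strictly monotone on each component. If the connected sets $V,V'$ lie over the same component of $\pi_x(\mathcal M)$, then $R\circ\varphi=R$ alone gives $X=x$; the Killing field is not even needed there.

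What you do not handle is $V$ and $V'$ lying in different components of $\mathcal M$ with overlapping $R$-ranges. The paper needs this case, since the lemma is applied to transition maps between arbitrary open subsets (Lemma~\ref{lemma.isometry.1}, Proposition~\ref{prop:admissible.arc}). Most instances are easy:
\begin{itemize}
\item For \ref{case1b}, \ref{case1c}, \ref{case2b} with $\veps_2=-1$, $\gst_{22}$ changes sign across $x=0$, while $\gst_{22}(x)=\lambda^2\gst_{22}(X)$, so no such isometry exists.
\item In \ref{case2c}, most pairs of components are separated in the same way by the sign of $\gst_{22}$ or of $I$.
\end{itemize}
The exception is \ref{case2c} with $\veps_2=1$ and $c>2\sqrt2$. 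Let $r_1<r_2<0$ be the roots of $2x^2+cx+1$. Then the components $\{x<r_1\}$ and $\{r_2<x<0\}$ have the same signature, the same signs of $I$ and $\gst_{22}$, and overlapping $R$-ranges (analogously for $c<-2\sqrt2$).

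Here an actual argument is required. One option: on the overlap, $X=(R|_{(r_2,0)})^{-1}\circ R$ is real-analytic. So $\gst_{22}(x)/\gst_{22}(X(x))$ would have to equal the constant $\lambda^2$ on the whole overlap interval. But it tends to $0$ as $R\to R(r_2)^{+}$, because $\gst_{22}(X)$ blows up at $X=r_2$ while the matching point of $\{x<r_1\}$ stays interior.

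Your appeal to non-constant curvature and unboundedness does not touch this. The difficulty is not non-monotonicity of $R$ inside a component, but distinct components that no sign invariant distinguishes.
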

%
\subsubsection{A first look at the geometry of regular and singular points}\label{sec:first.look}

In this section we  record a simple observation on  the arrangement of regular points corresponding to different standard models. To this purpose, and for practical reasons, we introduce the following sets:
$$
M_{\mathrm{reg}}:=\text{Regular locus of $M$}\,,
$$
$$
M_{\mathcal M}:= \{ \text{points of $M_{\mathrm{reg}}$ locally isometric to $\mathcal{M}$}\}\,,\,\,\text{for each standard model $(\mathcal M,\gst)\,,$}
$$ 
$$
M_{\mathrm{cc}}:= \{ \text{points of $M_{\mathrm{reg}}$ admitting a neighbourhood of constant curvature}\}.
$$
Simple observations lead to the following lemma.
\begin{Lemma}\label{lem:regular_boundary}
Let $(M,g)$ be a $2$-dimensional pseudo-Riemannian manifold with $\dim \mathfrak{p}(g)\ge 2$. 
Then all sets $M_{\mathcal M}$ and $M_{\mathrm{cc}}$ are open and  pairwise disjoint.
Moreover, whenever
$
\partial A \cap \partial B \neq \emptyset,
$
with $A,B \in \{ M_{\mathrm{cc}} \} \cup\bigcup_{\mathcal M} \{ M_{\mathcal M}\}$ and $A \neq B$,
such intersection consists only of singular points. In particular,  $M_{\mathcal M}$ and $M_{\mathrm{cc}}$ form a partition of  $M_{\mathrm{reg}}$.
\end{Lemma}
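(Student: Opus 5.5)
Openness is immediate from the definitions. If $p\in M_{\mathcal M}$, there is a neighbourhood $U$ of $p$ and an isometry $\varphi\colon U\to V\subseteq\mathcal M$. Every $q\in U$ then has $U$ itself as a neighbourhood isometric to an open subset of $\mathcal M$, so $U\subseteq M_{\mathcal M}$. Likewise, if $p$ has a neighbourhood of constant curvature, every point of that neighbourhood does, so $M_{\mathrm{cc}}$ is open. By definition every regular point lies in $M_{\mathrm{cc}}$ or in some $M_{\mathcal M}$, so these sets cover $M_{\mathrm{reg}}$. The partition statement therefore reduces to pairwise disjointness.

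For disjointness I will argue by contradiction and use the final uniqueness statement of Theorem \ref{th.Bryant.Manno.Matveev}. Suppose $p\in M_{\mathcal M}\cap M_{\mathcal M'}$. Intersecting the two defining neighbourhoods gives an open $W\ni p$ isometric both to an open subset of $\mathcal M$ and to an open subset of $\mathcal M'$. Hence some open subset of $\mathcal M$ is isometric to an open subset of $\mathcal M'$, and the uniqueness clause ("no two metrics from different entries, or with different parameters, are locally isometric") forces $\mathcal M=\mathcal M'$.

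Now suppose $p\in M_{\mathcal M}\cap M_{\mathrm{cc}}$. Then some open subset of $\mathcal M$ has constant curvature. This is impossible: each standard model has non-constant curvature on every open set. The curvature is a function of $x$ alone, and one checks from the explicit formulas that it is non-constant, indeed real-analytic and non-constant. I would justify this by the same direct computation used in \cite{BMM}. A cleaner argument: a constant-curvature metric has an $8$-dimensional projective algebra, while by Remark \ref{rem.proj.v.f.standard} one has $\mathfrak p(\gst|_V)=\mathfrak p(\gst)|_V$, which has dimension $2$ or $3$.

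For the boundary statement, let $q\in\partial A\cap\partial B$ with $A\ne B$. Since $A$ is open, $q\notin A$, and similarly $q\notin B$. Suppose $q$ were regular. Then $q$ would lie in some member $C$ of the family, which is open, so $C$ would be a neighbourhood of $q$. Because $q\in\partial A$, this neighbourhood meets $A$, and by disjointness $C=A$. This contradicts $q\notin A$. Hence $q$ is singular.

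The only step with real content is the non-constancy of curvature, equivalently the dimension comparison for $\mathfrak p$ on open subsets. Invoking Remark \ref{rem.proj.v.f.standard} makes it immediate.
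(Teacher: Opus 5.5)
Your proof is correct. The paper gives no written argument beyond calling the lemma ``simple observations'', and your steps (openness directly from the definitions, disjointness from the uniqueness clause of Theorem \ref{th.Bryant.Manno.Matveev} together with the non-constancy of $R$ on every open subset of a standard model, as visible in Tables \ref{table1}--\ref{table2}, and the boundary claim from openness plus disjointness) are exactly the intended reasoning; incidentally, your boundary argument shows the slightly stronger fact that $\partial A\cap M_{\mathrm{reg}}=\emptyset$ for every single member $A$ of the family.
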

%
%
%
%
%
\begin{Rem}\label{lem:several.reg}
 If $M$ contains regular points corresponding to at least two different standard models (or to a standard model and constant curvature) and has no singular points, then $M = M_{\mathrm{reg}}$ would be the disjoint union of at least two non-empty open subsets. This contradicts the connectedness of $M$. Therefore, $M$ must admit singular points.
\end{Rem}

\subsubsection{Extension of isometries along projective vector fields}\label{sec.extension.iso}
We now focus our attention on a general mechanism that allows us to propagate isometries along the flow of projective vector fields.  The results contained in the subsequent Lemmas \ref{lemma.tricky} and \ref{lemma.isometry.1} make this principle precise and will serve as a basic tool for the proof of Propositions \ref{prop:admissible.arc} and \ref{prop:admissible.arc.2}.
\begin{Lemma}\label{lemma.tricky}
Let $(M,g)$ and $(M',g')$ be $2$-dimensional pseudo-Riemannian manifolds, and 
$U \subset M$, $U' \subset M'$ be open subsets. Let $X$ and $X'$ be projective
vector fields on $(M,g)$ and $(M',g')$, respectively, and denote by $\phi_t$ and
$\phi'_t$ their local flows.  
Assume that
$$
\varphi :  (U,g) \to (U',g')
$$
is an isometry (resp. a surjective local isometry\,\footnote{By a local isometry we mean a smooth map $f$ between pseudo-Riemannian manifolds $M$ and $N$ with the property that each point of $M$ possesses a neighbourhood $U$ such that $f|_U$ is an isometry. Of course, this does not guarantee that $f$ is injective.}) such that $\varphi_*(X|_U)=X'|_{U'}$. Then the map
$$
\phi'_{t_0} \circ \varphi \circ \phi_{-t_0} :  \phi_{t_0}(U) \to \phi'_{t_0}(U')
$$
is an isometry (resp. a surjective local isometry) sending $X$ to $X'$, for each $t_0$ for which it is well defined.
\end{Lemma}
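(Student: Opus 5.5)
The plan is to show directly that $\psi:=\phi'_{t_0}\circ\varphi\circ\phi_{-t_0}$ pulls $g'$ back to $g$ near every point of $\phi_{t_0}(U)$. I will not rely on the flows being isometries; they are not. Instead I use two facts: the flows are projective, and $\varphi$ intertwines $X$ and $X'$.

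Since isometry is a pointwise, local condition, fix $q=\phi_{t_0}(p)$ with $p\in U$. By the definition of the local flows, $\phi_s$ is defined on a neighbourhood $W\subset U$ of $p$ for all $s\in[0,t_0]$. Likewise $\phi'_s$ is defined on $\varphi(W)$ (shrinking $W$ if needed), since the composite is assumed well defined. Let $a=\det(g)^{-\frac23}g$ and $a'=\det(g')^{-\frac23}g'$ be the associated sections of $S^2T^*M\otimes\mathrm{Vol}^{-\frac43}$ as in \eqref{a.from.g}. The correspondence $g\leftrightarrow a$ is invertible and natural under diffeomorphisms, so it suffices to prove
$$
(\phi_{t_0})^*a=\varphi^*(\phi'_{t_0})^*a' \quad\text{on } W .
$$
This identity is equivalent to $\psi^*a'=a$ near $q$, because $\psi\circ\phi_{t_0}=\phi'_{t_0}\circ\varphi$ on $W$.

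Set $b_s:=(\phi_s)^*a$ and $b'_s:=\varphi^*(\phi'_s)^*a'$ on $W$, for $s\in[0,t_0]$. At $s=0$ we have $b_0=a|_W$ and $b'_0=\varphi^*a'=a|_W$, because $\varphi$ is an isometry. Differentiating the flows gives $\partial_s b_s=(\phi_s)^*\mathcal L_Xa=\mathcal L_X b_s$. For the primed family, $\partial_s b'_s=\varphi^*(\phi'_s)^*\mathcal L_{X'}a'=\varphi^*\mathcal L_{X'}(\phi'_s)^*a'$. Naturality of the Lie derivative together with $\varphi_*X=X'$ turns this into $\mathcal L_X b'_s$. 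Both families therefore solve the transport equation $\partial_s\beta_s=\mathcal L_X\beta_s$ with the same initial datum. Its solution is unique: $(\phi_{-s})^*\beta_s$ has zero $s$-derivative, hence is constant and equal to $\beta_0$, on the part of $W$ where the flow stays defined. So $b_{t_0}=b'_{t_0}$, and $\psi$ is an isometry near $q$.

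The property $\psi_*X=X'$ follows because each factor intertwines the fields: $(\phi_{-t_0})_*X=X$, $\varphi_*X=X'$ and $(\phi'_{t_0})_*X'=X'$. Bijectivity in the isometry case is clear, since $\psi$ is a composition of bijections between the stated domains. In the local-isometry case the same pointwise argument applies on small neighbourhoods where $\varphi$ is an isometry. Surjectivity onto $\phi'_{t_0}(U')$ is immediate from the surjectivity of $\varphi$.

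The only genuinely delicate step is the domain bookkeeping: I must guarantee that the whole segment $s\in[0,t_0]$ is defined on a common neighbourhood of $p$ for both flows, so that the transport argument is legitimate. I would handle this by compactness of $[0,t_0]$ and openness of flow domains. Notably, projectivity of $X$ and $X'$ enters only implicitly, through the meaningfulness of the objects involved; the core identity is purely a consequence of the intertwining relation.
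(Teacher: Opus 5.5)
There is a genuine gap, and it sits in the step you treat as routine: the claimed uniqueness for $\partial_s\beta_s=\mathcal{L}_X\beta_s$ on $W$. The family $b'_s=\varphi^*(\phi'_s)^*a'$ exists only on $W\subset U$, so $(\phi_{-s})^*b'_s$ is defined at a point $r$ only if $\phi_{-s}(r)\in W$. Your constancy argument therefore gives $b_{t_0}(p)=b'_{t_0}(p)$ only for points $p$ whose whole orbit segment $\phi_{[0,t_0]}(p)$ stays in $W$. For such $p$ one has $\phi_{t_0}(p)\in U$, and $\psi=\varphi$ there by the intertwining relation, which is the trivial case. In the case the lemma is about, the orbits leave $U$. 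A solution of the transport equation on $W$ at time $t_0$ is then governed by data along $\phi_{[0,t_0]}(W)$, which is precisely where $g$ and $\psi^*g'$ have not been compared. Knowing $b_0=b'_0$ on $W$ says nothing there.

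Your closing remark, that projectivity enters only implicitly, is the symptom. Take $M=M'=\R^2$, $g$ flat, and $g'$ equal to $g$ on a disc $U$ about the origin but different near $(5,0)$. Let $X=X'=\partial_x$ and $\varphi=\mathrm{id}_U$. Every line of your argument applies verbatim, yet $\phi'_5\circ\varphi\circ\phi_{-5}=\mathrm{id}$ is not an isometry from $(\phi_5(U),g)$ to $(\phi_5(U),g')$. What fails is exactly the hypothesis you never used: that $X'$ is projective for $g'$.

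The missing ingredient is projectivity combined with the finite-type rigidity of \eqref{lin1} (Proposition \ref{prop:solutions}). The paper works near points with $X\neq 0$, which form a dense set by Corollary \ref{cor.proj.field.vanishing}. There it builds flow-box coordinates $(s,t)$ on tubes around $\phi_{[0,t_0]}(p)$ and $\phi'_{[0,t_0]}(\varphi(p))$ in which $X=X'=\partial_t$. It then compares $g$ with $\Psi^*g'$, where $\Psi$ equals $\varphi$ near $t=0$ and $\psi$ near $t=t_0$. Projectivity means the coefficients $F^i$, $F'^i$ of the two projective connections are independent of $t$, so their equality near $t=0$ propagates along the whole tube. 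Then $a$ and $\Psi^*a'$ solve the same linear system \eqref{lin1} and agree on an open set, hence agree on the whole tube.

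Your transport idea can be repaired with the same ingredient. Take $W$ connected and let $\mathrm{Sol}(W)$ be the finite-dimensional space of solutions of \eqref{lin1} on $W$. Both $b_s$ and $b'_s$ lie in $\mathrm{Sol}(W)$, because $(\phi_s)^*g$ and $(\phi'_s\circ\varphi)^*g'$ are metrics on $W$ projectively equivalent to $g|_W$ ($\phi_s,\phi'_s$ are projective transformations and $\varphi$ is an isometry). Moreover $L:=\mathcal{L}_X|_{\mathrm{Sol}(W)}$ maps $\mathrm{Sol}(W)$ into itself since $X$ is projective. Your equation then becomes the linear ODE $\dot\beta=L\beta$ on a finite-dimensional vector space. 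There uniqueness holds with no domain-of-dependence issue and gives $b_{t_0}=e^{t_0L}a=b'_{t_0}$. Without this step, the proposal's uniqueness claim is false and the proof does not go through.
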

\begin{proof} 
We shall prove the lemma in the case where $\varphi$ is an isometry; the case where it is a surjective local isometry can be handled with minor straightforward changes.

\smallskip\noindent
If $X\equiv 0$, the statement is trivial. Otherwise, in view of Corollary \ref{cor.proj.field.vanishing}, $X$ cannot vanish identically on nonempty open sets.
Therefore, $X\neq 0$ on a dense open subset of $M$.
Since the statement concerns an identity between smooth tensor fields, precisely $g=(\phi'_{t_0} \circ \varphi  \circ \phi_{-t_0})^*(g')$, it suffices to prove it
in neighbourhoods of points $p\in U$ where $X(p)\neq 0$. Then,
let $p\in U$ and set $p':=\varphi(p)$. Consider the trajectories
$$
\gamma(t):=\phi_t(p)\,,\quad \gamma'(t):=\phi'_t(p'),
$$
that we assume defined on an interval containing $[0,t_0]$.
Choose a smooth curve $c:(-\delta,\delta)\to M$ with $c(0)=p$ and transverse to the trajectory
$\gamma$ at $p$. For $\delta>0$ sufficiently small, the map
$$
C:(-\delta,\delta)\times[0,t_0]\to M,\quad C(s,t):=\phi_t(c(s)),
$$
is well defined and it turns out to be a diffeomorphism onto a tubular neighbourhood of
$\gamma([0,t_0])$.
In particular, $(s,t)$ are coordinates on this neighbourhood. Now, let $c'(s):=\varphi(c(s))$. Similarly,  shrinking $\delta$ if necessary, the map
$$
C':(-\delta,\delta)\times[0,t_0]\to M',\quad C'(s,t):=\phi'_t(c'(s)),
$$
is a diffeomorphism onto a tubular neighbourhood of $\gamma'([0,t_0])$. We identify these neighbourhoods by declaring that points with the same coordinates $(s,t)$ correspond via the map
\begin{equation}\label{eq:Psi.app}
\Psi:=C'\circ C^{-1}.
\end{equation}
By construction, for any $(s,t)$ in the domain of the coordinates, we have
$
\Psi\big(\phi_t(c(s))\big)=\phi'_t\big(\varphi(c(s))\big).
$
Moreover, since $\varphi_*(X|_U)=X'|_{U'}$, it follows that
$
\varphi\circ\phi_t=\phi'_t\circ\varphi
$
whenever both sides are defined.
Choosing $\delta>0$ small enough so that $\phi_t(c(s))\in U$ for $t$ in a neighbourhood
of $0$, it follows that $\Psi=\varphi$ on a neighbourhood $\mathcal{T}$
containing the curve $\{t=0\}$.
Similarly, shrinking $\delta$ if necessary, we have
\begin{equation}\label{eq.Psi}
\Psi=\phi'_{t_0}\circ\varphi\circ\phi_{-t_0}
\end{equation}
on a neighbourhood containing the curve $\{t=t_0\}$.
In these coordinates we have $X=\tfrac{\partial}{\partial t}$ and $X'=\tfrac{\partial}{\partial t}$.
Since $X$ and $X'$ are projective with respect to $g$ and $g'$, respectively, the coefficients $F^0,\mathellipsis,F^3$ and $F'^{0},\mathellipsis,F'^3$ of the associated projective connections (cf. Equation \eqref{eq.proj.conn.associated}) do not depend on the coordinate $t$.
Since $\Psi^*(g')=g$ in the neighbourhood $\mathcal{T}$, the associated projective connections coincide there, and hence $F^i=F'^i$ on such neighbourhood. By $t$-independence, it follows that
$$
F^i(s)=F'^i(s)
$$
on the whole tubular neighbourhood of $\gamma([0,t_0])$ where $\Psi$ given by \eqref{eq:Psi.app} was defined. 
Then $a=  \det(g)^{-\frac23}g$ and $a' =  \det(g')^{-\frac23}g'$ (cf. \eqref{a.from.g}) solve the same system \eqref{lin1} and coincide on a neighbourhood. By Proposition \ref{prop:solutions}, they coincide on the whole tubular neighbourhood. Therefore $g=\Psi^*(g')$ in a neighbourhood containing the curve $\{t=t_0\}$, with $\Psi$ given by \eqref{eq.Psi}. The proposition is proved. 
\end{proof}
\begin{Rem}\label{rem:isometry.ext} 
Under the assumptions of Lemma \ref{lemma.tricky}, the  isometries (resp. surjective local isometries)
$$
\varphi_t:=\phi'_t \circ \varphi \circ \phi_{-t} : \phi_t(U) \to \phi'_t(U')
$$
are compatible in the sense  that, for different values of $t$, they coincide on the intersections of their domains, whenever they are well defined.
Indeed, the condition $\varphi_{t*}(X) = X'$ implies that $\varphi_t$ intertwines the
local flows of $X$ and $X'$.
Consequently, for any $t_0$ such that all these compositions are defined for
$t \in [0,t_0]$, they glue together to define a surjective local isometry
\begin{equation*}
\widetilde{\varphi}:\bigcup_{t\in[0,t_0]} \phi_t(U) \to \bigcup_{t\in[0,t_0]} \phi'_t(U')
\end{equation*}
still sending $X$ to $X'$ and extending $\varphi$ (i.e.  $\widetilde{\varphi}\,|_{U}=\varphi$). In particular, $\widetilde\varphi$ is a covering map.
\end{Rem} 

\begin{Lemma}\label{lemma.isometry.1}  
Let $(M,g)$ be a $2$-dimensional pseudo-Riemannian manifold with $\dim\mathfrak{p}(g)\geq 2$. Assume that every regular point admits a neighbourhood isometric to an open subset of one of the standard models. Let $\gamma:[0,1]\to M$ be a curve such that points $\gamma(t)$ are regular for any $t>0$. Assume that there exist two
projective vector fields in $\mathfrak{p}(g)$ which are non-proportional at $\gamma(t)$  for every $t>0$. Then there exists a neighbourhood of $\gamma((0,1])$ which is locally isometric, via a covering map, to an open subset of one of the standard models $(\mathcal{M},\gst)$.
\end{Lemma}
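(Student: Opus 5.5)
We would start at the endpoint $\gamma(1)$, which is regular by assumption, and propagate an isometry backwards along $\gamma$ by repeatedly applying Lemma \ref{lemma.tricky} and Remark \ref{rem:isometry.ext}. Let $X,Y\in\mathfrak p(g)$ be the two projective vector fields that are non-proportional along $\gamma((0,1])$.

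First we fix the model. Choose a neighbourhood $U_1$ of $\gamma(1)$ with an isometry $\varphi_1:U_1\to V_1\subset\mathcal M$ onto an open subset of a standard model $(\mathcal M,\gst)$. We may shrink $U_1$ so that $X,Y$ span $T_qM$ for all $q\in U_1$.

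The push-forwards $\varphi_{1*}X$ and $\varphi_{1*}Y$ are projective vector fields of $\gst|_{V_1}$. By the last claim of Remark \ref{rem.proj.v.f.standard}, they are restrictions of globally defined projective fields $X',Y'\in\mathfrak p(\gst)$ on $\mathcal M$. This global extendability is the key input. It lets us apply Lemma \ref{lemma.tricky} with $X,X'$, obtaining an isometry
\[
\phi^{X'}_t\circ\varphi_1\circ\phi^X_{-t}
\]
that still sends $X$ to $X'$. We also need it to send $Y$ to the corresponding $Y'$. This holds because the push-forward of $Y$ under $\phi^X_t$ is a projective field, namely $\mathrm{Ad}$ of $Y$, and Lemma \ref{lemma.tricky}, applied with the relevant fields, intertwines the whole Lie algebra generated. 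We should check this using Corollary \ref{cor.proj.field.vanishing}: both sides are projective fields that agree on an open set. Consequently the domain of the extended local isometry can be enlarged along flow lines of both $X$ and $Y$, alternately.

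Next comes the covering argument. Let $T$ be the set of $s\in(0,1]$ such that a neighbourhood of $\gamma([s,1])$ admits a surjective local isometry onto an open subset of $\mathcal M$ intertwining $X,Y$ with $X',Y'$. The set $T$ is nonempty, and it is open by construction. For closedness in $(0,1]$, take $s_*=\inf T>0$. The point $\gamma(s_*)$ is regular, and $X,Y$ span the tangent plane on a small neighbourhood $W$ of it. Hence any point of $W$ can be reached from a point $\gamma(s)$, with $s>s_*$ close to $s_*$, by composing short flows of $X$ and $Y$. Applying Lemma \ref{lemma.tricky} finitely many times (an orbit-chart argument) and gluing via Remark \ref{rem:isometry.ext}, we extend the local isometry to $W$. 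Compatibility on overlaps follows from Proposition \ref{prop:solutions}: two local isometries into $\mathcal M$ that agree on an open set agree on connected overlaps, by the analytic-type rigidity of the system \eqref{lin1} together with Lemma \ref{lemma.iso}. Thus $s_*\in T$ and $T=(0,1]$.

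The main obstacle is the well-definedness of the glued map, since going around loops may produce monodromy. We handle this by working on a thin tubular neighbourhood of $\gamma((0,1])$, which is simply connected. Where monodromy persists, we accept only a local isometry that is a covering onto its image, which is exactly what the statement allows. A further subtlety is that the flows in $\mathcal M$ must remain in $\mathcal M$. For this we rely on the fact that the target fields are globally defined on $\mathcal M$, and we shrink the neighbourhoods accordingly.
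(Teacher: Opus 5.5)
Your architecture matches the paper's. You start from a model chart at $\gamma(1)$, use the global extendability of projective fields on $(\mathcal M,\gst)$ to feed Lemma~\ref{lemma.tricky}, propagate backwards with an infimum argument, and end with a covering. Intertwining both $X$ and $Y$ (via the pull-back of $Y'$ and Corollary~\ref{cor.proj.field.vanishing}) and using a two-field orbit chart instead of a single field with $X(\gamma(t_k))=\dot\gamma(t_k)$ are harmless variations.

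The gap is in the closedness step, exactly at the ``further subtlety'' you dismiss. Lemma~\ref{lemma.tricky} only yields $\phi^{X'}_t\circ\varphi\circ\phi^{X}_{-t}$ for those $t$ for which $\phi^{X'}_t$ is defined on the image. The fact that $X',Y'$ are defined on all of $\mathcal M$ says nothing about how long their integral curves stay in $\mathcal M$; they can reach $\partial\mathcal M$ or infinity in finite time. For instance, for the standard model \ref{case2c} with $\varepsilon_1\varepsilon_2=-1$, the field $-xe^y\partial_x+e^y\partial_y$ carries $(x_0,y_0)$ to $x\to 0$, $y\to+\infty$ at time $e^{-y_0}$. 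If $\varphi(\gamma(s))$ drifts towards $\partial\mathcal M$ or to infinity as $s\downarrow s_*$, the flow time available at $\varphi(\gamma(s))$ may shrink faster than the time needed on the $M$ side to get past $\gamma(s_*)$. Shrinking neighbourhoods does not change this.

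Your step uses the regularity of $\gamma(s_*)$ only through the spanning of $X,Y$ near it, which is already a hypothesis. Run verbatim at a singular point where $\mathfrak p(g)$ still spans, it would give a model chart around a singular point, which is absurd. An example is a point of $\{x^2-y^2=0\}\setminus\{(0,0)\}$ for the normal form \ref{case2c.singular.bis} of Theorem~\ref{th.main}, approached through regular points. What actually fails there is precisely that the image runs into $\{x=0\}\subset\partial\mathcal M$.

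The missing ingredient is what the paper supplies by keeping $\overline{\varphi_k(U_k)}$ compact in $\mathcal M$, using the regularity of the next point together with Lemma~\ref{lemma.iso}. Concretely, take an isometry $\psi$ from a neighbourhood $W$ of $\gamma(s_*)$ onto an open set with compact closure in a standard model; by Lemma~\ref{lem:regular_boundary} this model is $\mathcal M$. On the connected component of the overlap containing $\gamma((s_*,s_*+\delta))$, Lemma~\ref{lemma.iso} (applied locally) gives $\varphi=A\circ\psi$ with $A(x,y)=(x,\epsilon y+k)$, a global isometry of $\mathcal M$. Hence $\varphi(\gamma(s))\to A(\psi(\gamma(s_*)))\in\mathcal M$, and the flow times of $X',Y'$ near this limit are uniform. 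More simply, you can extend by $A\circ\psi$ directly, after thinning the tube so that the overlap is connected.

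Two smaller repairs are also needed. First, $T=(0,1]$ by itself only gives unrelated maps near each $\gamma([s,1])$, not one map on a neighbourhood of $\gamma((0,1])$. You should define $T$ through extensions of a fixed germ at $\gamma(1)$ on nested thin tubes, as the paper does with $\varphi_{k+1}|_{U_k}=\varphi_k$. Second, ``accepting a covering'' does not cure monodromy, since a multivalued map is not a map on the neighbourhood at all. What removes the issue is the simply connected thin tube around an embedded arc, which is also why the paper restricts to non-self-intersecting $\gamma$.
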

\begin{proof}
We shall prove the lemma in the case where the curve $\gamma$ is not self-intersecting in order to provide a clearer and more concise proof; the case where it is self-intersecting can be handled with minor straightforward changes.

\smallskip\noindent
For each $t\in(0,1]$, the point $\gamma(t)$ is regular. Hence, there exists an open
neighbourhood of $\gamma(t)$ which is  isometric to an open subset of a
standard model  $(\mathcal M,\gst)$. 
In particular, there exists an isometry
$$
\varphi_1 :  U_1 \to V_1
$$
from a neighbourhood  $U_1\subset M$ of $\gamma(1)$ into an open subset $V_1 \subset \mathcal M$. If necessary, we replace $U_1$ by a smaller neighbourhood, that we continue denoting by $U_1$, such that $\overline{U_1}$ and $\overline{\varphi(U_1)}$ are compact, respectively, in $M$ and $\mathcal M$
and, by considering  that we are assuming that $\gamma$ has no self-intersections,
$\gamma^{-1}(U_1)\subset(0,1]$ is an interval.

\medskip\noindent
If $\gamma(0)\in\partial U_1$, then
$U_1$ is already a neighbourhood of $\gamma((0,1])$ and there is nothing to prove.

\medskip\noindent
Otherwise, let $t_1\in(0,1]$ be such that $\gamma^{-1}(U_1)=(t_1,1]$, namely $\gamma(t_1)\in\partial U_1$. 
By hypothesis, there exist two projective vector fields in $\mathfrak p(g)$ which
are non-proportional at $\gamma(t_1)$. Therefore, we may choose a projective
vector field $X\in\mathfrak p(g)$ such that its local flow $\phi_\tau$ enters $U_1$ at $\gamma(t_1)$. For instance, we may consider $X\in\mathfrak p(g)$ such that $X(\gamma(t_1))=\dot \gamma(t_1)$.
Hence, there exists $\varepsilon>0$ for which
$$
\phi_\tau(\gamma(t_1)) \in U_1 \,\,\text{for all } \tau \in (0,\varepsilon).
$$
Restricting $X$ to $U_1$ and pushing it forward by the  isometry
$\varphi_1 : U_1 \to V_1$, we obtain a projective vector field on $V_1$.
Any projective vector field defined on an open subset of a standard model $(\mathcal{M},\gst)$ is the
restriction of a globally defined projective vector field, cf. Remark \ref{rem.proj.v.f.standard}.
Denote by $Y$ the corresponding projective vector field on the whole standard model
$\mathcal M \subseteq \mathbb{R}^2$, so that
$
Y|_{V_1} = (\varphi_1)_* \,X|_{U_1}.
$
Since $\overline{U_1}$ and $\overline{V_1}$ are compact and $X$, $Y$ are global smooth vector fields,
there exists $\varepsilon_0>0$ such that the local flows $\phi_\tau$ of $X$ and
$\phi'_\tau$ of $Y$ are defined for all points in $U_1$ and $V_1$, respectively, and
all $\tau\in(-\varepsilon_0,\varepsilon_0)$. Thus, the assumptions of Lemma \ref{lemma.tricky} are satisfied.
Applying Lemma \ref{lemma.tricky} together with Remark \ref{rem:isometry.ext}, we obtain a
local isometry
$$
\varphi_2:  U_2\to V_2,
$$
where $U_2\subset M$ is an open set  containing $U_1$ and point
$\gamma(t_1)$. If $\partial U_2\cap\gamma=\emptyset$, then $U_2$ is already the neighbourhood we are looking for. In the other case, i.e., if $\partial U_2\cap\gamma=\gamma(t_2)$ for some $t_2\in (0,1]$, if necessary we replace $U_2$ by a smaller neighbourhood of $U_1 \cup \{\gamma[t_1,t_2)\}$, that we continue denoting by $U_2$, such that its closure is compact. Furthermore, $\overline{\varphi_2(U_2)}$ is bounded and then compact. Indeed, by contradiction, let $\overline{\varphi_2(U_2)}$ be unbounded. Since $\gamma(t_2)$ is a regular point, there exists an isometry $\psi$ between a (suitable small) neighbourhood of $\gamma(t_2)$  into a bounded open subset of the standard model $(\mathcal{M},\gst)$. Such open subset would contain a bounded open subset of $(\mathcal{M},\gst)$ that would be isometric, via $\varphi_2\circ\psi^{-1}$, to an unbounded open subset of  $(\mathcal{M},\gst)$. 
This is not possible in view of Lemma \ref{lemma.iso}.


\smallskip\noindent
Iterating the above argument, we construct two increasing sequences of open sets
$$
U_1 \subset U_2 \subset \cdots \subset M,
\qquad\quad
V_1 \subset V_2 \subset \cdots \subset \mathcal M,
$$
together with local isometries $\varphi_k : U_k \to V_k$ such that
$\varphi_{k+1}|_{U_k}=\varphi_k$ for every $k$. Indeed, whenever
$$
t_k := \inf\{\, t\in(0,1] \mid \gamma((t,1]) \subset U_k \,\} \, >0,
$$
the above construction can be repeated at the point $\gamma(t_k)$. By the hypothesis that at $\gamma(t_k)$ there exist two non-proportional projective vector fields, we may choose $X\in\mathfrak p(g)$ pointing into $U_k$ at $\gamma(t_k)$. Applying Lemma \ref{lemma.tricky}, we obtain a local
isometry $\varphi_{k+1}:  U_{k+1}\to V_{k+1}$, with $U_{k+1}$ containing $U_k$
and $\gamma(t_k)$, extending $\varphi_k$. 
By compatibility, all local isometries obtained through the above procedure glue together to a maximal local isometry defined on a neighbourhood of $\gamma((\tilde t,1])$, for some $\tilde t\geq0$. We claim that $\tilde t=0$. Indeed, if $\tilde t>0$, then $\gamma(\tilde t)$ is regular, and the same extension argument can be applied there, extending the local isometry beyond $\gamma(\tilde t)$, in contradiction with its maximality.
 By Remark \ref{rem:isometry.ext}, such local isometry is a covering map. This concludes the proof.
\end{proof}
\begin{Rem}\label{rem.no.self.inters}
The assumption that the projective vector fields are non-proportional at every point of the curve is made for simplicity of  exposition: in the proof, non-proportionality is actually required only
at the set of points where the (local) isometry is extended.
\end{Rem}
\subsubsection{An obstruction to the applicability of Lemma \ref{lemma.isometry.1}: the non-transitivity locus}\label{sec.obstruction}
The condition that the projective vector fields be non-proportional along the curve
$\gamma$ in Lemma \ref{lemma.isometry.1} is not an artificial requirement: it does not follow from the mere regularity of the points of $\gamma$. Indeed, although at every point of a standard model there exist two projective vector fields which are non-proportional at that point (see Remark \ref{rem.proj.v.f.standard}), this does not imply the existence of a pair of globally defined projective vector fields which are non-proportional at every point.
The obstruction arises precisely for $2$-dimensional pseudo-Riemannian manifolds admitting a $2$-dimensional  projective algebra  and having regular points of type \ref{case2a}, \ref{case2b} or \ref{case2c}. Indeed, only in this situation two non-proportional projective vector fields may become pointwise linearly dependent along curves of regular points. Such phenomenon is illustrated in the next example and is more systematically analysed in the subsequent lemma.
\begin{Ex} \label{ex.1}
We exhibit a $2$-dimensional subalgebra of the projective Lie algebra of the standard model $\gst$
of type \ref{case2c} whose generators become collinear along curves of the domain of the metric.\\
Assume $\varepsilon_1\varepsilon_2=1$, so that a basis of $\mathfrak p(\gst)$, see Remark \ref{rem.proj.v.f.standard}, is given by
$$
X_1=\partial_y,\qquad
X_2=-x\cos(y)\,\partial_x+\sin(y)\,\partial_y,\qquad
X_3=x\sin(y)\,\partial_x+\cos(y)\,\partial_y .
$$
Vector fields $X_1+X_2$ and $X_3$ form a $2$-dimensional subalgebra $\mathfrak h(\gst)\subset\mathfrak p(\gst)$ as $[X_1+X_2,X_3]=-(X_1+X_2)$:
%
%
its generators become collinear
precisely along the lines
$y=-\tfrac{\pi}{2}+2k\pi$, $k\in\mathbb Z$,
recalling that we are dealing with the points of the domain of the standard metric \ref{case2c} of Theorem \ref{th.Bryant.Manno.Matveev}.
\end{Ex}
%
For the reasons explained above, we need to introduce the \emph{non-transitivity locus} $\Sigma_N(\mathfrak{h})$ of a Lie subalgebra $\mathfrak{h}$  of vector fields on a $2$-dimensional manifold $N$ as follows:
\begin{equation}\label{eq:non.trans.locus}
\Sigma_N(\mathfrak{h}):=\{p\in N\,\,|\,\,\dim\operatorname{span}\mathfrak{h}_p<2\}\,.
\end{equation}
%
Since we will study this notion in the context of $2$-dimensional subalgebras of $\mathfrak{p}(g)$, we first establish their existence in the next lemma.
\begin{Lemma}\label{lem:exist.hg}
Let $(M,g)$ be a $2$-dimensional pseudo-Riemannian manifold with $\dim\mathfrak{p}(g)\geq 2$.  Assume that there exists a regular point admitting a neighbourhood isometric to an open subset of one of the standard models. Then there exists a $2$-dimensional Lie subalgebra $\mathfrak{h}(g)$ of $\mathfrak{p}(g)$.
\end{Lemma}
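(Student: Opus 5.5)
Fix a regular point $p$ together with an isometry $\varphi$ from a neighbourhood $U$ of $p$ onto an open subset $V$ of a standard model $(\mathcal M,\gst)$. The plan is to transport a $2$-dimensional subalgebra from the model to $M$ and then show it lies in $\mathfrak p(g)$.

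\textbf{Step 1: a subalgebra of the model.} By Remark \ref{rem.proj.v.f.standard}, $\mathfrak p(\gst|_V)=\mathfrak p(\gst)|_V$, and this algebra has dimension $2$ or $3$. If it is $2$, the whole algebra, spanned by $\partial_y$ and $\partial_x+y\partial_y$, is $2$-dimensional. If it is $3$, take $\partial_y,\ \partial_x+y\partial_y$ for models \ref{case2a}, \ref{case2b}, or the pair from Example \ref{ex.1} and its analogue for $\varepsilon_1\varepsilon_2=-1$ for model \ref{case2c}. Any non-abelian $3$-dimensional real algebra of this kind ($\mathfrak{sl}_2$ type) contains a Borel subalgebra, so a $2$-dimensional subalgebra $\mathfrak h_V$ always exists.

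\textbf{Step 2: transport to $U$.} Since $\varphi$ is an isometry, $\varphi^{-1}_*$ maps $\mathfrak p(\gst|_V)$ isomorphically onto $\mathfrak p(g|_U)$, so $\mathfrak p(g|_U)$ contains a $2$-dimensional subalgebra $\mathfrak h_U$.

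\textbf{Step 3: compare $\mathfrak p(g|_U)$ with $\mathfrak p(g)$.} Restriction $\mathfrak p(g)\to\mathfrak p(g|_U)$ is a Lie algebra homomorphism, and it is injective by Corollary \ref{cor.proj.field.vanishing}. Hence $\dim\mathfrak p(g)\le\dim\mathfrak p(g|_U)\le 3$.

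1. If $\dim\mathfrak p(g)=2$, then $\mathfrak p(g)$ itself serves as $\mathfrak h(g)$.
2. If $\dim\mathfrak p(g)=3$, then $\dim\mathfrak p(g|_U)=3$ as well, so restriction is an isomorphism. Pulling $\mathfrak h_U$ back through this isomorphism gives a $2$-dimensional subalgebra of $\mathfrak p(g)$.

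\textbf{Main obstacle.} Injectivity does not by itself let an element of $\mathfrak h_U$ extend to all of $M$. The dimension count in Step 3 avoids the extension problem, except when $\dim\mathfrak p(g)=2$ but $\dim\mathfrak p(g|_U)=3$; case 1 already covers that situation. The only facts that must be checked are the ones used above: $\dim\mathfrak p(g|_U)\le 3$, which comes from Remark \ref{rem.proj.v.f.standard}, and injectivity of restriction, which comes from Corollary \ref{cor.proj.field.vanishing}.
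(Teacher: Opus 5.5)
Your proof is correct and follows essentially the same route as the paper. The case $\dim\mathfrak p(g)=2$ is trivial; when $\dim\mathfrak p(g)=3$, injectivity of restriction (Corollary \ref{cor.proj.field.vanishing}) together with $\dim\mathfrak p(g|_U)\le 3$ makes restriction an isomorphism, so the model's $2$-dimensional subalgebra pulls back to $\mathfrak p(g)$. The paper checks $[Y,Z]=Y$ globally by applying the Corollary to $[Y,Z]-Y$, which is exactly your observation that restriction is an injective Lie algebra homomorphism.
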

\begin{proof}
Under our assumptions, one has $\dim\mathfrak{p}(g) \in \{2,3\}$. 
If $\dim\mathfrak{p}(g)=2$, the statement is trivial.

Assume now that $\dim\mathfrak{p}(g)=3$. Then there exists an open subset $U \subseteq M$ which is isometric to an open subset $V$ of one of the standard models $(\mathcal M,\gst)$ of type \ref{case2a}, \ref{case2b}, or \ref{case2c}. Let $\phi: U \to V$ be such an isometry. 
Since the Lie algebra $\mathfrak{p}(\gst)$ contains a $2$-dimensional Lie subalgebra $\mathfrak{h}(\gst)$ (cf.\ Remark \ref{rem.proj.v.f.standard}), pushing forward its generators via $\phi^{-1}$ we obtain a $2$-dimensional Lie algebra of projective vector fields on $U$. 
Since $\phi_*\,\mathfrak{p}(g|_U)=\mathfrak{p}(\gst|_V)$ and, by Remark \ref{rem.proj.v.f.standard},
every projective vector field on $\gst|_V$ is the restriction of a projective vector field on $(\mathcal M,\gst)$, we have
$$
\dim\mathfrak{p}(g|_U)=\dim\mathfrak{p}(\gst|_V)=\dim \mathfrak{p}(\gst)=3.
$$
Since $\dim\mathfrak{p}(g)=3$ and by considering that  $\mathfrak{p}(g)|_U\subseteq \mathfrak{p}(g|_U)$, these vector fields extend uniquely to global projective vector fields $Y$ and $Z$ on $M$, i.e. $Y, Z \in \mathfrak{p}(g)$. Choosing the generators appropriately, we may assume that
$[Y,Z]\big|_U = Y\big|_U.$
Then the vector field $[Y,Z] - Y$ is a projective vector field on $M$ vanishing on $U$. By Corollary \ref{cor.proj.field.vanishing}, it follows that $[Y,Z] = Y$ on $M$. Therefore, $Y$ and $Z$ generate a $2$-dimensional Lie subalgebra $\mathfrak{h}(g) \subset \mathfrak{p}(g)$.
\end{proof}
Now we are in position to give a more precise description of the non-transitivity locus in our context.
\begin{Lemma}\label{lem:noopen.collinearity}
Let $(M,g)$ be a $2$-dimensional pseudo-Riemannian manifold with $\dim\mathfrak p(g)\geq 2$.
Assume that every regular point admits a neighbourhood isometric to an open subset of one of the standard models.
Let $\mathfrak h(g)\subseteq\mathfrak p(g)$ be an arbitrary $2$-dimensional Lie subalgebra of $\mathfrak p(g)$ (it exists by Lemma \ref{lem:exist.hg}).
Then the non-transitivity locus $\Sigma_{M_\mathrm{reg}}(\mathfrak h(g))$ of $\mathfrak h(g)$ is either empty 
or a union of disjoint smooth curves.
\end{Lemma}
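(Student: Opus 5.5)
The statement is local on $M_{\mathrm{reg}}$, so I would work near an arbitrary regular point $p$. Fix generators $Y,Z$ of $\mathfrak h(g)$, and let $\Omega=Y\wedge Z$ be the smooth function $\det(Y,Z)$ computed in local coordinates (equivalently $\Omega=\omega(Y,Z)$ for a local area form $\omega$). Then locally $\Sigma_{M_{\mathrm{reg}}}(\mathfrak h(g))=\{\Omega=0\}$. Choose an isometry $\varphi:U\to V$ from a neighbourhood $U$ of $p$ onto an open subset $V$ of a standard model $(\mathcal M,\gst)$. By Remark \ref{rem.proj.v.f.standard}, $\varphi_*Y$ and $\varphi_*Z$ are restrictions of global projective vector fields of $\mathcal M$, and they span a $2$-dimensional subalgebra $\mathfrak h'$ of $\mathfrak p(\gst)$. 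The local structure of $\Sigma$ near $p$ is therefore that of $\Sigma_{\mathcal M}(\mathfrak h')$ near $\varphi(p)$. This reduces the lemma to a finite computation on the standard models.

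For models \ref{case1a}, \ref{case1b} and \ref{case1c}, $\mathfrak p(\gst)$ is $2$-dimensional, so $\mathfrak h'=\mathrm{span}\{\partial_y,\partial_x+y\partial_y\}$ and $\Omega'=\det(\partial_y,\partial_x+y\partial_y)=-1\neq0$. Hence $\Sigma$ is empty near such points.

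For models \ref{case2a} and \ref{case2b}, $\mathfrak p(\gst)\cong\mathfrak{sl}(2,\R)$ acts by $\partial_y$, $\partial_x+y\partial_y$, $2y\partial_x+y^2\partial_y$. Up to conjugation by the group generated by $\mathfrak p$, the $2$-dimensional subalgebras of $\mathfrak{sl}(2,\R)$ are the Borel subalgebras. I would parametrize them concretely as stabilisers of a point of $\mathbb{RP}^1$: the pairs $\{\partial_y,\ \partial_x+y\partial_y\}$, or $\{X_\lambda, \ldots\}$ with $X_\lambda$ vanishing-type at $y=\lambda$. For the first pair $\Omega'=-1$. For a general Borel subalgebra, obtained by the $y$-Möbius action, I would compute the determinant directly. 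For example, $\{\partial_x+y\partial_y,\ 2y\partial_x+y^2\partial_y\}$ gives $\Omega'=y^2-2y^2=-y^2$. This vanishes on $y=0$, but only to second order, so the zero set is still the smooth line $\{y=0\}$. In general I expect $\Omega'=-c(y-\lambda)^2$, with zero set a single horizontal line.

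Model \ref{case2c} is handled the same way using the bases of Remark \ref{rem.proj.v.f.standard}. A general $2$-dimensional subalgebra is conjugate to one like Example \ref{ex.1}. There $\Omega'$ is a function $x\,F(y)$ whose zero set in $\mathcal M$ (where $x\neq0$) is a family of vertical lines $y=y_0+2k\pi$ (respectively a single line when $\varepsilon_1\varepsilon_2=-1$). These are disjoint smooth curves.

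The main obstacle is that $\Omega$ may vanish to higher order, as in $-y^2$ above, so the implicit function theorem cannot be applied to $\Omega$ directly. I would bypass this by using the explicit description: in every case the zero set in the model is a union of lines $\{y=\mathrm{const}\}$ (or $\{y=\mathrm{const}\}$-type curves), which are locally closed and disjoint. To classify the subalgebras up to the model's isometries (Lemma \ref{lemma.iso}) and automorphisms, I would use $\mathfrak{sl}(2,\R)$-conjugacy together with the form of $\Omega'$ under a change of basis of $\mathfrak h'$. That change of basis multiplies $\Omega'$ by a nonzero constant, so the zero set does not depend on the chosen generators. Finally, these local pieces glue: $\Sigma$ is closed in $M_{\mathrm{reg}}$ and locally a disjoint union of embedded curves, hence a disjoint union of smooth curves (its connected components).
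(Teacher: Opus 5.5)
Your proposal is correct and follows essentially the same route as the paper: transport $\mathfrak h(g)$ to a standard model via a local isometry and Remark \ref{rem.proj.v.f.standard}, then check case by case that the locus is empty for types \ref{case1a}--\ref{case1c} and consists of lines $y=\mathrm{const}$ for types \ref{case2a}--\ref{case2c}. Your Borel-subalgebra/Wronskian computation is a concrete version of the paper's ``long computation''. The one step you leave implicit is that $\varphi_*Y$ and $\varphi_*Z$ remain linearly independent on $V$, without which $\Sigma\cap U=U$; this holds because a constant-coefficient combination vanishing on $U$ vanishes on all of $M$ by Corollary \ref{cor.proj.field.vanishing}, which is how the paper (via Proposition \ref{homo}) rules out this degenerate case.
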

\begin{proof}
Let $\mathfrak h(g)$ be spanned by $X,Y$. Choose a neighbourhood $U\subseteq M$ of an
arbitrary regular point $p\in M$ together with an isometry
$$
\varphi:(U,g)\to (V,\gst)\,.
$$
Then, in view of  Remark \ref{rem.proj.v.f.standard}, there exist $X_{\mathrm{std}},Y_{\mathrm{std}}\in\mathfrak p(\gst)$ such that their restrictions to $V$ coincide with the push-forwards of $X|_U$ and $Y|_U$ via $\varphi$, respectively, i.e.,
$X_{\mathrm{std}}|_V=\varphi_*(X|_U)$, $Y_{\mathrm{std}}|_V=\varphi_*(Y|_U).$
By considering again Remark \ref{rem.proj.v.f.standard}, a direct computation shows that $X_{\mathrm{std}}\wedge Y_{\mathrm{std}}\equiv 0$ on a non-empty open set $W\subseteq V$ if and only if $X_{\mathrm{std}}$ and $Y_{\mathrm{std}}$ are proportional on $W$ by a constant.
In this case, $X$ and $Y$ are proportional by the same constant on the open set $\varphi^{-1}(W)\subseteq M$, and hence on all of $M$ by
Proposition \ref{homo}, contradicting that we are assuming $\dim\mathfrak h=2$. Therefore $X_{\mathrm{std}}\wedge Y_{\mathrm{std}}$ cannot vanish on any non-empty open subset of
$V$.

When $(V,\gst)$ is of type $\ref{case1a}$, $\ref{case1b}$ or $\ref{case1c}$, by
Remark \ref{rem.proj.v.f.standard}, 
if $X_{\mathrm{std}}\wedge Y_{\mathrm{std}}$ vanishes at a point, then $X_{\mathrm{std}}$ and $Y_{\mathrm{std}}$ are proportional by a constant, that is a contradiction.
Hence
$$
\Sigma_{M_\mathrm{reg}}(\mathfrak h(g))\cap U
=\varphi^{-1}\big(\{X_{\mathrm{std}}\wedge Y_{\mathrm{std}}=0\}\big)=\emptyset.
$$
Assume now that $(V,\gst)$ is of type $\ref{case2a}$, $\ref{case2b}$ or $\ref{case2c}$ with
$\varepsilon_1\varepsilon_2=-1$. A long computation shows that, when non-empty, the
non-transitivity locus $\Sigma_{\mathcal M}\big({\mathfrak h(\gst)}\big)$ of an arbitrary $2$-dimensional Lie subalgebra $\mathfrak h(\gst)$ of $\mathfrak p(\gst)$ is a single straight line of the form
$
y=y_0,
$
where the value $y_0\in\R$ depends on the considered Lie subalgebra $\mathfrak h(\gst)$. 
In the remaining case
of the model of type $\ref{case2c}$, i.e., when $\varepsilon_1\varepsilon_2=1$, the corresponding
non-transitivity locus $\Sigma_{\mathcal M}\big({\mathfrak h(\gst)}\big)$, when non-empty, is a discrete family of parallel lines, namely
either
$y=y_0+2h\pi$ or $y=y_0+h\pi$, $ h\in\Z$,
where the value of $y_0$ and the minimal period are determined by the specific choice of
the $2$-dimensional Lie subalgebra  $\mathfrak h(\gst)$. Therefore, in all these cases,
$\Sigma_{M_\mathrm{reg}}(\mathfrak h(g))\cap U$ is either empty or a union of disjoint smooth curves in $U$.
Since $p$ was arbitrary, the lemma follows.
\end{proof}
\begin{Rem}\label{rem:Sigma.notempty}
As already shown in the proof of Lemma \ref{lem:noopen.collinearity}, the condition
$\Sigma_{M_\mathrm{reg}}(\mathfrak h(g))=\emptyset$ holds whenever the regular points are of type $\ref{case1a}$,
$\ref{case1b}$ or $\ref{case1c}$. 
Moreover, if $\dim\mathfrak{p}(g)=3$ (hence there exist regular points  of type $\ref{case2a}$, $\ref{case2b}$ or $\ref{case2c}$), then  there exists a (nontrivial) linear combination of generators of $\mathfrak p(g)$ whose push-forward by an isometry is a Killing vector field of the corresponding standard model. Therefore, by Proposition \ref{homo}, such linear combination turns out to be a (global) Killing vector field on $M$.
Since in the standard models of type $\ref{case2a}$, $\ref{case2b}$ and $\ref{case2c}$ any $2$-dimensional subalgebra $\mathfrak h(\gst)$ containing the Killing field satisfies  $\Sigma_{\mathcal M}\big(\mathfrak h(\gst)\big)=\emptyset$,  the condition $\Sigma_{M_{\mathrm{reg}}}(\mathfrak h(g))\neq\emptyset$ implies that  $\dim \mathfrak{p}(g)=2$ and that regular points are of type $\ref{case2a}$, $\ref{case2b}$ or $\ref{case2c}$.
\end{Rem}
%
%
%
As we said in the beginning of this section, our target is to prove Proposition \ref{prop:admissible.arc.2}, that concerns geodesics of $(M,g)$ with $\dim\mathfrak{p}(g)\geq 2$ rather than general curves.
In order to do it, we need a last technical lemma, concerning the behaviour of geodesics of standard models $(\mathcal{M}, \gst)$ near the non-transitivity locus $\Sigma_{\mathcal M}(\mathfrak h (\gst))$ of a $2$-dimensional Lie subalgebra $\mathfrak h (\gst) \subseteq \mathfrak{p} (\gst)$ (recall the definition given by \eqref{eq:non.trans.locus}). 
Indeed, the non-transitivity locus is the only possible place where the
direct application of Lemma \ref{lemma.isometry.1}  may fail.  
The next lemma shows that, along geodesics of the standard models,
this possible failure is either persistent, when the geodesic lies in a
component of the non-transitivity locus, or occurs only at isolated points.

\begin{Lemma}\label{lem:geodesic.Sigma}
Let $(\mathcal M,\gst)$ be a standard model, and let
$\mathfrak h(\gst)\subseteq\mathfrak p(\gst)$ be a $2$-dimensional Lie subalgebra such that $\Sigma_{\mathcal M}\big(\mathfrak h(\gst)\big)\neq \emptyset$.  Let  $\gast:(0,1)\to \mathcal M$ be a geodesic. Then the following properties hold:
\begin{enumerate}
\item\label{lem:geodesic.Sigma:tangent} If  $\gast$  is tangent to a connected
component of $\Sigma_{\mathcal M}\big(\mathfrak h(\gst)\big)$ at some point, then $\gast$ is entirely contained
in that component.
\item\label{lem:geodesic.Sigma:transverse} If $\gast$ intersects  $\Sigma_{\mathcal M}\big(\mathfrak h(\gst)\big)$
transversely at some point, then it intersects each connected component of $\Sigma_{\mathcal M}\big(\mathfrak h(\gst)\big)$ at most once. In particular,
$
\gast\cap \Sigma_{\mathcal M}\big(\mathfrak h(\gst)\big)
$
is a discrete set.
\end{enumerate}
\end{Lemma}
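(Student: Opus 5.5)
By Remark~\ref{rem:Sigma.notempty} and the proof of Lemma~\ref{lem:noopen.collinearity}, the hypothesis $\Sigma_{\mathcal M}(\mathfrak h(\gst))\neq\emptyset$ forces $(\mathcal M,\gst)$ to be of type \ref{case2a}, \ref{case2b} or \ref{case2c}. In these cases every connected component of $\Sigma_{\mathcal M}(\mathfrak h(\gst))$ is a horizontal line $\{y=y_0\}$ intersected with $\mathcal M$ (for \ref{case2c} with $\varepsilon_1\varepsilon_2=1$, one of the lines $y=y_0+2h\pi$ or $y=y_0+h\pi$). The plan is to reduce both items to properties of the $y$-coordinate along geodesics, using the Killing field $\partial_y$ and the first integrals \eqref{syst.principal}.

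\emph{Item \ref{lem:geodesic.Sigma:tangent}.} First I check that each horizontal line $y=y_0$ in these models is, after reparametrisation, a geodesic. The cleanest argument uses the isometry $(x,y)\mapsto(x,2y_0-y)$ from Lemma~\ref{lemma.iso}: it is an isometry of $(\mathcal M,\gst)$ whose fixed-point set is exactly $\{y=y_0\}$, and fixed-point sets of isometries are totally geodesic. Alternatively, one can note that in the projective connections \eqref{eq.ref.proj.conn.2} and \eqref{eq.ref.proj.conn.3} the term $F^0$ vanishes, so $y=\mathrm{const}$ solves the geodesic ODE. Now suppose $\gast$ is tangent to the component $\{y=y_0\}$ at $\gast(t_0)$. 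Then $\gast$ and the horizontal geodesic through $\gast(t_0)$ have the same initial point and proportional initial velocities. By uniqueness of geodesics, $\gast$ is a reparametrisation of that horizontal geodesic on the connected domain $(0,1)$, so $\gast$ lies in the component. Equivalently, tangency gives $\dot y(t_0)=0$, hence $C_1=0$ in \eqref{syst.principal}, hence $\dot y\equiv0$.

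\emph{Item \ref{lem:geodesic.Sigma:transverse}.} Transversality at some point means $\dot y(t_0)\neq 0$, so $C_1\neq0$. By Lemma~\ref{lemma.self.inters}, $y(t)$ is then strictly monotone on $(0,1)$. A strictly monotone function attains each value $y_0$ at most once, so $\gast$ meets each line $\{y=y_0\}$, and in particular each connected component of $\Sigma_{\mathcal M}(\mathfrak h(\gst))$, at most once. For discreteness, the components are the lines $y=y_0+k\pi$ (or $2k\pi$), a discrete set of values of $y$, possibly just one value. Since $y(t)$ is continuous and strictly monotone, the preimage $\{t: y(t)\in y_0+\pi\Z\}$ is discrete in $(0,1)$. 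Hence $\gast\cap\Sigma_{\mathcal M}(\mathfrak h(\gst))$ is discrete.

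\emph{Main obstacle.} The main difficulty is the claim, taken from the "long computation" in the proof of Lemma~\ref{lem:noopen.collinearity}, that the components of the non-transitivity locus are horizontal lines. I would verify this directly from the bases in Remark~\ref{rem.proj.v.f.standard}. A $2$-dimensional subalgebra not containing $\partial_y$ has generators whose wedge product, up to a nonvanishing factor in $x$, is a trigonometric or exponential function of $y$ alone. For example, in Example~\ref{ex.1} $(X_1+X_2)\wedge X_3=-x(1+\sin y)$. Its zero set is therefore a union of horizontal lines, and once this is confirmed the two items above follow.
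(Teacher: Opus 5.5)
Your proposal is correct and follows essentially the same route as the paper. You use the description of $\Sigma_{\mathcal M}(\mathfrak h(\gst))$ as (pieces of) horizontal lines, then the first integral $\gst_{22}(x)\dot y=C_1$ for the tangent case and the strict monotonicity of $y(t)$ from Lemma~\ref{lemma.self.inters} for the transverse case. Your extra check that the wedge of the generators depends on $y$ alone usefully fills in the ``long computation'' the paper only cites: it is $x$ times a function of $y$ for type \ref{case2c}, and a polynomial in $y$ (not trigonometric or exponential) for types \ref{case2a}, \ref{case2b}.
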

\begin{proof}
By the explicit description of the non-transitivity locus $\Sigma_{\mathcal M}\big(\mathfrak h(\gst)\big)$ 
(cf. proof of Lemma \ref{lem:noopen.collinearity}), whenever $\Sigma_{\mathcal M}\big(\mathfrak h(\gst)\big)\neq\emptyset$, it is a union of straight lines. More precisely, for $\ref{case2a}$, $\ref{case2b}$, and for $\ref{case2c}$ with $\varepsilon_1\varepsilon_2=-1$, one has
$
\Sigma_{\mathcal M}\big(\mathfrak h(\gst)\big)=\{y=y_0\}$,
whereas for $\ref{case2c}$ with $\varepsilon_1\varepsilon_2=1$ one has
$
\Sigma_{\mathcal M}\big(\mathfrak h(\gst)\big)=\{\,y=y_0+mT\,\}_{m\in\mathbb Z}$,
with $T\in\{\pi,2\pi\}$ depending on the considered Lie subalgebra $\mathfrak h(\gst)$.

If $\gast(t)=\big( x(t), y(t)\big)$ is tangent to $\Sigma_{\mathcal M}\big(\mathfrak h(\gst)\big)$ at some point $t_0$,
then $\dot y(t_0)=0$. By the first integral $g^{\rm std}_{22}(x(t))\dot y(t)=0$, it
follows that $y$ is constant along $\gast$. Therefore, $\gast$ is entirely contained in the same
connected component of $\Sigma_{\mathcal M}\big(\mathfrak h(\gst)\big)$.
Assume now that $\gast$ intersects $\Sigma_{\mathcal M}\big(\mathfrak h(\gst)\big)$ transversely. By Lemma \ref{lemma.self.inters}, whenever $y$ is not constant, it is strictly monotone along $\gast$.
Hence, $\gast$ can intersect each connected component of $\Sigma_{\mathcal M}\big(\mathfrak h(\gst)\big)$ at most once. Therefore, in the cases $\ref{case2a}$, $\ref{case2b}$, and $\ref{case2c}$ with
$\varepsilon_1\varepsilon_2=-1$,  $\gast\cap\Sigma_{\mathcal M}\big(\mathfrak h(\gst)\big)$ consists of a single point.
Moreover, in  the remaining case $\ref{case2c}$, i.e. when $\varepsilon_1\varepsilon_2=1$, since the set of values $\{y_0+mT\}_{m\in\mathbb Z}$ is discrete, the set $\gast\cap\Sigma_{\mathcal M}\big(\mathfrak h(\gst)\big)$  is discrete.
\end{proof}

\subsubsection{Proof of the existence of modelled geodesics issuing from singular points}\label{sec.prop.3.4}

\begin{Prop}\label{prop:admissible.arc}
Let $(M,g)$ be a $2$-dimensional pseudo-Riemannian manifold with $\dim\mathfrak{p}(g)\ge 2$. Assume that every regular point admits a neighbourhood isometric to an open subset of one of the standard models.
Let $\gamma:[0,1]\to M$ be a geodesic such that $\gamma(0)$ is singular and $\gamma(t)$ are regular points for $t\in (0,1]$.
Then $\gamma$ is a modelled geodesic.
\end{Prop}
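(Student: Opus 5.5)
The plan is to apply Lemma \ref{lemma.isometry.1} to the curve $\gamma$. That lemma produces a neighbourhood $U$ of $\gamma((0,1])$ together with a covering-type local isometry onto an open subset of a standard model. It will then remain to upgrade this to an isometry on a neighbourhood of $\gamma((0,1))$, as Definition \ref{def:modelled} requires.

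\textbf{Step 1: a good subalgebra.} By Lemma \ref{lem:exist.hg} we fix a $2$-dimensional subalgebra $\mathfrak h(g)\subseteq\mathfrak p(g)$, spanned by $X,Y$. The hypothesis of Lemma \ref{lemma.isometry.1} is that two projective fields are non-proportional at every $\gamma(t)$ with $t>0$; by Remark \ref{rem.no.self.inters}, this is only needed at the points where the isometry is extended. If $\Sigma_{M_{\mathrm{reg}}}(\mathfrak h(g))=\emptyset$, the hypothesis holds and we are done with this step. By Remark \ref{rem:Sigma.notempty}, this covers all models of types \ref{case1a}, \ref{case1b}, \ref{case1c}, as well as the case $\dim\mathfrak p(g)=3$, where we take $\mathfrak h$ to contain the global Killing field.

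In the remaining case, $\dim\mathfrak p(g)=2$ and the regular points are of type \ref{case2a}, \ref{case2b} or \ref{case2c}. Here $\Sigma_{M_{\mathrm{reg}}}(\mathfrak h(g))$ is a union of disjoint smooth curves by Lemma \ref{lem:noopen.collinearity}, and locally these curves are the images of the lines $y=\mathrm{const}$ of Lemma \ref{lem:geodesic.Sigma}. We will handle this case by extending along $\gamma$ step by step. Near a point $\gamma(t_k)$ in $\Sigma$ we use the local isometry $\psi$ to the model given by regularity: the full model algebra $\mathfrak p(\gst)$ is $3$-dimensional and transitive there. We will therefore extend the isometry across $\gamma(t_k)$ directly, comparing $\psi$ with the current isometry on the overlap. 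Lemma \ref{lemma.iso} forces them to differ by $(x,y)\mapsto(x,\pm y+k)$, which is itself an isometry of the model, so the two glue after composing. This replaces the flow argument at such points, and since, by Lemma \ref{lem:geodesic.Sigma}, $\gamma$ meets $\Sigma$ in a discrete set or lies entirely inside it, the extension proceeds along all of $(0,1]$. If $\gamma$ lies inside a component of $\Sigma$, the same gluing argument along a tube around it applies.

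\textbf{Step 2: injectivity.} Lemma \ref{lemma.isometry.1} gives only a covering map $\widetilde\varphi:U\to V\subseteq\mathcal M$. We shrink $U$ to a thin tube around $\gamma((0,1))$. The image $\widetilde\varphi\circ\gamma$ is a geodesic of the model, hence not self-intersecting by Lemma \ref{lemma.self.inters}. A covering restricted to a thin enough tube around an embedded arc, mapping onto a tube around a non-self-intersecting arc, is injective: a tube around an arc is simply connected, so the covering is trivial over it, and we select the sheet containing $\gamma$. This requires $\gamma$ itself to be non-self-intersecting on $(0,1)$; if it were self-intersecting, then, $\widetilde\varphi$ being a local isometry along $\gamma$, the image geodesic would self-intersect or retrace itself, contradicting Lemma \ref{lemma.self.inters}. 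A closed periodic geodesic is excluded in the same way, since $y$ is strictly monotone or constant, and in the constant case $x$ is monotone along a horizontal geodesic.

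\textbf{Main obstacle.} I expect the main difficulty to be Step 1 in the case where $\Sigma\neq\emptyset$. The gluing must be compatible with the previously built isometry on the whole tube, not just along $\gamma$. The flows of $\mathfrak h$ degenerate along $\Sigma$, so tube widths might shrink to zero as the $t_k$ accumulate. To rule this out I would use the $3$-dimensional transitive model algebra near each point, together with Lemma \ref{lemma.iso}, to keep the tube width bounded below on compact subarcs $[\delta,1]$. Exhausting $(0,1]$ by such subarcs then completes the argument.
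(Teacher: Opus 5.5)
Your proposal is essentially the paper's proof. Both fix a $2$-dimensional $\mathfrak h(g)$, propagate the isometry with Lemma \ref{lemma.isometry.1} wherever $\mathfrak h(g)$ is transitive along $\gamma$, and cross the points of $\Sigma_{M_{\mathrm{reg}}}(\mathfrak h(g))$ by gluing a regular chart to the current isometry via Lemma \ref{lemma.iso}. Both then turn the covering into an isometry by restricting to a simply connected tube around the non-self-intersecting model geodesic.

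The one real difference is the case $\gamma((0,1])\subset\Sigma$. The paper flows along a field $Z\in\mathfrak h(g)$ tangent to $\gamma$ (Lemma \ref{lemma.tricky}), while you glue charts via Lemma \ref{lemma.iso}. Your variant works, since finitely many charts with connected overlaps cover each $\gamma([\delta,1])$. It also shows that the flows are dispensable at every regular point. Your worry about tube widths is unfounded. In the transverse case the parameter set $T$ is closed and discrete in $(0,1]$, so each $[\delta,1]$ needs only finitely many gluings, and widths shrinking as $t\to0^+$ are allowed by Definition \ref{def:modelled}.

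Two of your justifications need repair.

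(i) When $\dim\mathfrak p(g)=3$ you cannot always put the Killing field into $\mathfrak h$. For type \ref{case2c} with $\varepsilon_1\varepsilon_2=1$ one has $[\partial_y,X_2]=X_3$ and $[\partial_y,X_3]=-X_2$, so $\partial_y$ lies in no $2$-dimensional subalgebra. Indeed, Example \ref{ex.1} has $\Sigma\neq\emptyset$ with $\dim\mathfrak p=3$. This is harmless, because your argument for $\Sigma\neq\emptyset$ never uses $\dim\mathfrak p(g)=2$.

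(ii) Your reason why $\gamma$ cannot self-intersect fails as stated. If $\gamma(t_1)=\gamma(t_2)$, continuation along $\gamma|_{[t_1,t_2]}$ may return as $\varphi_{t_2}=A\circ\varphi_{t_1}$ with $A(x,y)=(x,\epsilon y+k)$. Then $\gast(t_2)=A(\gast(t_1))$, which need not equal $\gast(t_1)$. What is true is that $A$ must centralize $\varphi_*\mathfrak p(g)$. That forces $A=\mathrm{id}$ for every type except \ref{case2c} with $\varepsilon_1\varepsilon_2=1$, where $y\mapsto y+2\pi m$ commutes with all of $\mathfrak p(\gst)$. In that case you need either an extra argument or the assumption that $\gamma$ is embedded. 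The latter holds in the application, since Proposition \ref{prop:admissible.arc.2} uses a radial geodesic in a normal neighbourhood. The paper's proof is equally silent here: it relies on Lemma \ref{lemma.isometry.1}, whose self-intersecting case is left to ``minor straightforward changes''.
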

\begin{proof}

Since $\dim\mathfrak p(g)\geq 2$, there exists a $2$-dimensional Lie
subalgebra $\mathfrak h(g)\subseteq\mathfrak p(g)$
(see Lemma \ref{lem:exist.hg}). Let $\{X,Y\}$ be a basis of
$\mathfrak h(g)$. If
$(X\wedge Y)(\gamma(t))\neq 0$
for every $t\in(0,1]$,
then Lemma \ref{lemma.isometry.1} applies and provides a neighbourhood
of $\gamma((0,1])$ which is locally isometric, via a covering map, to
an open subset of a standard model. Since local isometries map
geodesics to geodesics and geodesics in the standard models have no
self-intersections (see Lemma \ref{lemma.self.inters}), by shrinking
the neighbourhood if necessary we may assume that its image is simply
connected. The covering map is then an isometry onto its image, and
we are done.
Assume now that
$$
T:=\left\{
t\in(0,1]\mid
\gamma(t)\in
\Sigma_{M_{\mathrm{reg}}}\bigl(\mathfrak h(g)\bigr)
\right\}\neq\emptyset.$$

Suppose first that $\gamma$ is tangent to
$\Sigma_{M_{\mathrm{reg}}}\bigl(\mathfrak h(g)\bigr)$ at some point.
In view of Lemma \ref{lem:geodesic.Sigma}
(\ref{lem:geodesic.Sigma:tangent}), we obtain
$
\gamma((0,1])
\subset
\Sigma_{M_{\mathrm{reg}}}\bigl(\mathfrak h(g)\bigr)$.
Along this set the distribution generated by $\mathfrak h(g)$ has
rank one (cf. Remark \ref{rem.proj.v.f.standard}) and is tangent to
$\Sigma_{M_{\mathrm{reg}}}\bigl(\mathfrak h(g)\bigr)$, since the rank of this distribution is preserved by the local flows of $\mathfrak h(g)$. Hence, at every point of $\gamma((0,1])$, one can
choose $Z\neq 0 \in\mathfrak h(g)$ such that
$Z(\gamma(t))$
is parallel to $\dot\gamma(t)$.
Starting from a neighbourhood of $\gamma(1)$ isometric to an open
subset of the corresponding standard model and suitably choosing the
sign of $Z$ whenever necessary, Lemma \ref{lemma.tricky}, together
with Remark \ref{rem:isometry.ext}, allows us to extend the isometry
successively along $\gamma$, as in the proof of
Lemma \ref{lemma.isometry.1}. The resulting map is a covering local
isometry. As above, restricting it over a sufficiently small simply
connected neighbourhood of the corresponding standard geodesic
yields an isometry onto its image. Thus $\gamma$ is modelled.

\smallskip

From now on, we assume that every intersection of $\gamma$ with
$\Sigma_{M_{\mathrm{reg}}}\bigl(\mathfrak h(g)\bigr)$ is transverse.
By point \ref{lem:geodesic.Sigma:transverse} of Lemma \ref{lem:geodesic.Sigma}, the set $T$ is discrete.
Moreover, $T$ is closed in $(0,1]$, being the zero set of
$(X\wedge Y)\circ\gamma$. It follows that
$
T\cap[\delta,1]
$
is finite for every $\delta>0$.
By Lemma \ref{lem:regular_boundary}, all points of $\gamma((0,1])$
are locally isometric to open subsets of the same standard model
$(\mathcal M,\gst)$. Starting from a standard neighbourhood of
$\gamma(1)$, we extend the corresponding isometry along $\gamma$ by
the procedure used in the proof of Lemma \ref{lemma.isometry.1}
(see also Remark \ref{rem.no.self.inters}). Indeed, whenever the
boundary of its domain meets $\gamma$ at a point $\gamma(t)$ with
$t>0$ and $t\notin T$, the vector fields $X$ and $Y$ are
non-proportional at $\gamma(t)$, and the same extension argument
applies.
At every such step, Remark \ref{rem:isometry.ext} gives a covering
local isometry. Since the image of the relevant portion of $\gamma$
is a geodesic of $(\mathcal M,\gst)$ without self-intersections, we
may restrict the covering to the component lying over a sufficiently
small simply connected tubular neighbourhood of this geodesic
portion and containing the previously constructed domain. On this component the covering is an isometry
$
\varphi:(U,g)\to(V,\gst)$.
Therefore, either $\partial U$ meets $\gamma$ only at $\gamma(0)$,
in which case we are done, or $\partial U$ meets $\gamma$ at a
regular point $\gamma(t_0)$ with $t_0\in T$.
Since $\gamma(t_0)$ is regular, there exist a neighbourhood $U_0$ of
$\gamma(t_0)$ and an isometry
$
\psi:(U_0,g)\to(\mathcal M,\gst)$.
After restricting to a sufficiently small open subset $W$ of
$U\cap U_0$, the transition map
$
A:=\psi|_W\circ(\varphi|_W)^{-1}$
is an isometry between open subsets of $(\mathcal M,\gst)$. By
Lemma \ref{lemma.iso},
$
A(x,y)=(x,\epsilon y+k)$,
with $\epsilon\in\{-1,1\},\, k\in\R$.
The same formula defines an isometry of the whole standard model.
Replacing $\psi$ by $A^{-1}\circ\psi$, we may therefore assume that
$
\psi=\varphi
$ on $W$.
Since the corresponding  geodesic in the standard model has no self-intersections,
after shrinking $U$ and $U_0$ to suitable tubular neighbourhoods we
may assume that $U\cap U_0$ is simply connected and
$\psi=\varphi$ on $U\cap U_0$,
so that the two maps glue to an isometry on $U\cup U_0$, extending
$\varphi$ across $\gamma(t_0)$.
We may now resume the extension along $\gamma$ and repeat the same
argument whenever a point of $T$ is encountered. Since
$T\cap[\delta,1]$ is finite for every $\delta>0$, only finitely many
such gluings are required on each arc $\gamma([\delta,1])$. Choosing
the tubular neighbourhoods successively so that the resulting
isometries extend the previously constructed ones, we obtain
increasing open sets
$
U_1\subset U_2\subset\cdots
$
whose union is a neighbourhood of $\gamma((0,1])$, together with
isometries
$
\varphi_n:(U_n,g)\to (V_n,\gst)$, where $
\varphi_{n+1}|_{U_n}=\varphi_n$.
Their union
$
\varphi:=\bigcup_n\varphi_n
$
is therefore an isometry onto its image. Hence $\gamma$ is a modelled
geodesic.
\end{proof}
%
%
%
We observe that modelled geodesics necessarily arise also in the presence of singular points. More precisely, we have the following proposition.
\begin{Prop}\label{prop:admissible.arc.2}
Let $(M,g)$ be a $2$-dimensional pseudo-Riemannian manifold with $\dim\mathfrak{p}(g)\ge 2$. Let $s\in M$ be a singular point. Then there exists a geodesic issuing from $s$ containing a modelled geodesic arc issuing from a (possibly different) singular point.
\end{Prop}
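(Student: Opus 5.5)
The strategy is to reduce to Proposition \ref{prop:admissible.arc}. First, the regular locus $M_{\mathrm{reg}}$ is open and dense; density is the result of \cite{BMM} recalled in Section \ref{sec.outline}. Hence every neighbourhood of the singular point $s$ contains regular points. Take a normal (convex) neighbourhood $W$ of $s$, so that every point of $W$ is joined to $s$ by a unique geodesic contained in $W$. Choose a regular point $r\in W$ and let $\gamma:[0,1]\to W$ be the geodesic with $\gamma(0)=s$ and $\gamma(1)=r$. This $\gamma$ will be the geodesic issuing from $s$ in the statement.

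Next, set
$$
\bar t:=\sup\{t\in[0,1]\mid \gamma(t)\ \text{is singular}\}.
$$
The singular locus is closed, being the complement of the open set $M_{\mathrm{reg}}$. Therefore $\gamma(\bar t)$ is singular, and $\bar t<1$ because $r$ is regular. By construction, $\gamma(t)$ is regular for every $t\in(\bar t,1]$. Reparametrize the arc $\gamma|_{[\bar t,1]}$ affinely onto $[0,1]$. The result is a geodesic starting at the singular point $\bar s=\gamma(\bar t)$ whose remaining points are all regular. This is exactly the configuration in Figure \ref{fig.mod.geod}.

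It remains to apply Proposition \ref{prop:admissible.arc} to this reparametrized arc, which makes it a modelled geodesic and proves the claim. That proposition has an extra hypothesis: every regular point must admit a neighbourhood isometric to an open subset of one of the standard models. I need to handle the regular points of constant curvature, for which this fails.

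This is where I expect the main obstacle, and I would split into cases.

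\emph{Case 1.} Some regular point near $s$ is not of constant curvature. I choose $r$ among such points, inside $M_{\mathcal M}$ for some standard model $\mathcal M$. Only the points of $\gamma((\bar t,1])$ matter. Since $\gamma((\bar t,1])$ is connected and contained in $M_{\mathrm{reg}}$, Lemma \ref{lem:regular_boundary} gives $\gamma((\bar t,1])\subset M_{\mathcal M}$. I would then run the proof of Proposition \ref{prop:admissible.arc} with $M$ replaced by the open set $M':=M_{\mathcal M}\cup(M\setminus M_{\mathrm{reg}})$, or by a neighbourhood of the arc inside it. That proof only uses the standard-model hypothesis at points of the arc, together with global projective vector fields, which restrict to $M'$.

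\emph{Case 2.} Every regular point near $s$ has constant curvature. Then the regular points near $s$ have constant curvature, and by density and continuity of the curvature so does a whole neighbourhood of $s$. In that case $s$ would be regular, a contradiction.

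The delicate point in Case 1 is checking that the extension argument of Lemma \ref{lemma.isometry.1} indeed never leaves $M_{\mathcal M}$. I would justify this via Lemma \ref{lem:regular_boundary}, which forbids two distinct regular types from meeting except through singular points.
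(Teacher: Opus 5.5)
Your argument is correct and is essentially the paper's proof: choose a regular $r$ in a normal neighbourhood of $s$, take the last singular time $\bar t$ on the geodesic from $s$ to $r$, and apply Proposition \ref{prop:admissible.arc} to $\gamma|_{[\bar t,1]}$. The paper also chooses $r$ off the non-transitivity locus $\Sigma_{M_{\mathrm{reg}}}(\mathfrak h(g))$, which is harmless but not needed, since Proposition \ref{prop:admissible.arc} already handles both tangent and transverse intersections with it.

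Your case split on constant-curvature regular points is a genuine improvement, because it checks a hypothesis of Proposition \ref{prop:admissible.arc} that the paper uses tacitly. Two small corrections:
\begin{itemize}
\item In Case 2, argue via $dR=0$ on a dense subset of the connected set $W$. Continuity of $R$ alone would not force $R$ to be constant.
\item In Case 1, work on a neighbourhood of $\gamma((\bar t,1])$ inside the open set $M_{\mathcal M}$. Do not use $M'$, which need not be open.
\end{itemize}
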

\begin{proof}
Since $\dim\mathfrak{p}(g)\ge 2$, there exists a $2$-dimensional Lie subalgebra $\mathfrak h(g)\subseteq\mathfrak p(g)$ (see Lemma \ref{lem:exist.hg}). Let $\{X,Y\}$ be a basis of $\mathfrak h(g)$.
By Lemma \ref{lem:noopen.collinearity}, the set $M_{\mathrm{reg}}\setminus \Sigma_{M_{\mathrm{reg}}}(\mathfrak h(g))$ is open and dense in $M$. Hence, we can choose a point $r\in M_{\mathrm{reg}}\setminus \Sigma_{M_{\mathrm{reg}}}(\mathfrak h(g))$ arbitrarily close to $s$. Taking $r$ inside a normal neighbourhood of $s$, there exists a geodesic
$\gamma:[0,1]\to M$ such that $\gamma(0)=s$ and $\gamma(1)=r$. 
Let
$$
T:=\{t\in[0,1]\mid \gamma(t)\ \text{is singular}\}.$$
Since the singular locus is closed, $T$ is closed. Since $T$ is a non-empty compact subset of $[0,1]$, it admits a maximum $\bar{t}:=\max T$.
After restricting to $[\bar{t},1]$ and reparametrizing, we may assume that $\gamma(0)$ is singular and $\gamma(t)$ is regular for every $t\in(0,1]$. By Proposition \ref{prop:admissible.arc}, such a geodesic arc is modelled.
\end{proof}
\begin{Cor}\label{cor:different.types}
Let $(M,g)$ be a $2$-dimensional pseudo-Riemannian
manifold with $\dim \mathfrak p(g)\geq 2$. Assume that $M_{\rm reg}$
contains regular points  either of at least of two different types or of the same type with different
values of the parameters. Then, for each non-empty connected component $A$ of $M_{\rm reg}$, there exists a modelled geodesic arc
$\gamma:[0,1]\to M$ issuing from a singular point such that $\gamma((0,1))\subset A$.
\end{Cor}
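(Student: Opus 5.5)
The plan is to first show that the singular locus touches every component of the regular locus, and then to reuse the construction in the proof of Proposition \ref{prop:admissible.arc.2}, launched from a regular point of $A$ rather than from a singular point.

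\emph{Step 1: existence of singular points and $\partial A\neq\emptyset$.} By assumption, $M_{\rm reg}$ meets at least two distinct sets among $\{M_{\rm cc}\}\cup\bigcup_{\mathcal M}\{M_{\mathcal M}\}$, where different parameters count as different standard models. By Lemma \ref{lem:regular_boundary}, these sets are open and pairwise disjoint. Remark \ref{lem:several.reg} then guarantees that singular points exist, so $M\neq M_{\rm reg}$.

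Let $A$ be a connected component of $M_{\rm reg}$. Since $M_{\rm reg}$ is open and $M$ is a manifold, $A$ is open in $M$. If $A$ were also closed in $M$, connectedness of $M$ would force $A=M$, which contradicts the existence of singular points. Hence $\partial A\neq\emptyset$. Any $s\in\partial A$ is not in $M_{\rm reg}$: otherwise it would lie in some component of $M_{\rm reg}$, which is open, and this would contradict $s\in\partial A$ together with $s\notin A$ (components are disjoint). So $s$ is singular.

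\emph{Step 2: choice of the endpoint.} Fix a normal (convex) neighbourhood $W$ of $s$. Because $s\in\overline A$ and, by Lemma \ref{lem:noopen.collinearity}, $\Sigma_{M_{\rm reg}}(\mathfrak h(g))$ is nowhere dense in $M_{\rm reg}$, we can pick $r\in A\cap W$ outside this locus. Let $\gamma:[0,1]\to W$ be the geodesic with $\gamma(0)=s$ and $\gamma(1)=r$.

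The set $T$ of parameters $t$ with $\gamma(t)$ singular is closed, since the singular locus is closed, and it contains $0$. Put $\bar t=\max T<1$. On $(\bar t,1]$ the curve consists of regular points and is connected, and it ends at $r\in A$. Hence $\gamma((\bar t,1])\subset A$.

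\emph{Step 3: modelling.} Reparametrize $[\bar t,1]$ to $[0,1]$. Proposition \ref{prop:admissible.arc} then shows that the resulting arc is a modelled geodesic issuing from the singular point $\gamma(\bar t)$, and its interior lies in $A$.

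One hypothesis of Proposition \ref{prop:admissible.arc} needs checking: every regular point must have a neighbourhood isometric to an open subset of a standard model. I expect this to be the main (mild) obstacle, because points of $M_{\rm cc}$ are regular only through constant curvature. Two cases arise.
\begin{itemize}
\item If $A\subset M_{\mathcal M}$ for a standard model, I would observe that the proof of Proposition \ref{prop:admissible.arc} uses this hypothesis only along $\gamma((0,1])\subset A$, so it applies as written.
\item If $A\subset M_{\rm cc}$, the statement should be read with the constant curvature metrics in the role of a model. The same extension argument goes through, with Lemma \ref{lemma.tricky} supplying the isometries, since constant curvature models have projective algebra acting transitively.
\end{itemize}
I would state this restriction explicitly and, if needed, confine the corollary's use to components of $M_{\mathcal M}$-type.
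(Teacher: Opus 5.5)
Your proposal is correct and follows the paper's route: singular points exist by Lemma \ref{lem:regular_boundary} and Remark \ref{lem:several.reg}, and the arc comes from the construction of Proposition \ref{prop:admissible.arc.2}, finished by Proposition \ref{prop:admissible.arc}. Your choice of $s\in\partial A$ with $r\in A$ close to $s$ usefully makes explicit how the arc is placed in the prescribed component $A$, which the paper's one-line proof leaves implicit.

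Your caveat about constant-curvature components is well founded. Standard models have non-constant $R$, so no modelled arc can lie in a component of $M_{\rm cc}$. Simply restricting to components of type $M_{\mathcal M}$ suffices, since that is the only case the paper uses (in Proposition \ref{impossible}). Your sketched constant-curvature analogue, by contrast, would need more than Lemma \ref{lemma.tricky}: Lemmas \ref{lemma.iso} and \ref{lemma.self.inters}, on which Lemma \ref{lemma.isometry.1} and Proposition \ref{prop:admissible.arc} rely, fail for constant-curvature models.
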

\begin{proof}
The existence of singular points follows from Lemma \ref{lem:regular_boundary} and Remark \ref{lem:several.reg}.
The existence of the required modelled geodesic arc follows from
Propositions \ref{prop:admissible.arc} and \ref{prop:admissible.arc.2}.
\end{proof}
\begin{Rem}\label{lem:two.singular}
In the situation described above (see also Figure \ref{fig.mod.geod}), a geodesic issuing from a singular point may meet other singular points. Hence, there can  exist geodesic arcs whose endpoints are singular while all interior points are regular. Any such arc is a modelled geodesic:
this follows from the same argument of Proposition \ref{prop:admissible.arc}. When needed, we will specify that they have \emph{singular endpoints} to distinguish them from those of Proposition \ref{prop:admissible.arc}.
\end{Rem}

\subsection{Behaviour of metric invariants along suspicious geodesics 
}\label{sec.behaviour}

The previous section showed that the study of singular points is closely related to that of modelled geodesic arcs. We now look at the realizations of such arcs in the standard models under modelling isometries. The goal
of this section is to prove that these realizations are necessarily suspicious geodesics.
To this purpose, let $\gamma$ be a modelled geodesic of $(M,g)$ issuing from a singular point and let $\gast$ be its image in a standard model $(\mathcal M, \gst)$.
 The  geodesic $\gast$ is either bounded in $\R^2$ or escapes every compact subset of $\R^2$. Some considerations when it is bounded are in order since, a priori, some accumulation phenomena of points of $\gast$ can appear. A more precise explanation is given below.

\smallskip\noindent
Suppose  that $\gast$ is bounded. If, in addition, there exists a projective vector field whose flow
points from the singular point $\gamma(0)$ towards the neighbourhood of the regular part where the  isometry is defined, then points $\gast(t)$ cannot accumulate at an interior point
of $\mathcal M$. Indeed, if the closure of $\gast$ were contained in $\mathcal M$,
Lemma \ref{lemma.tricky} would allow the  isometry to extend up to $\gamma(0)$, producing a neighbourhood of the singular point that is isometric to a neighbourhood of some standard model, a contradiction.
Hence, in this situation, the singular point $\gamma(0)$ corresponds to a boundary
point of $\mathcal M$ in $\R^2$.
A priori, it remains possible that the singular point $\gamma(0)$ corresponds to an interior accumulation point of the model.
In that case, the limit point must lie in a non-transitivity locus $\Sigma_{\mathcal M}(\mathfrak{h}(\gst))\subset \R^2$.
The analysis of this possibility and its exclusion is carried out in Proposition \ref{lem:no-interior-accumulation}, whose proof requires some technical lemmas contained in the Appendix.

\begin{Prop}\label{lem:no-interior-accumulation}
Let $(M,g)$ be a $2$-dimensional pseudo-Riemannian manifold with $\dim\mathfrak{p}(g)\ge 2$.
Let $\gamma$ be a modelled geodesic in $(M,g)$ issuing from a singular point, and let $\gast$ denote its image in the standard model $(\mathcal M,\gst)$ under an isometry. Then $\gast$ is a suspicious geodesic.
\end{Prop}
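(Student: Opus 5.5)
We argue by contradiction. Let $\varphi:U\to V\subset\mathcal M$ be the isometry of Definition \ref{def:modelled}, with $\gamma((0,1))\subset U$ and $\gast=\varphi\circ\gamma|_{(0,1)}$. Suppose $\gast$ is not suspicious. Then $\gast$ is bounded as $t\to0^+$ and stays away from $\partial\mathcal M$, so its set of accumulation points $L$ for $t\to0^+$ is a nonempty compact subset of $\mathcal M$. Since $\gast$ is a geodesic of the smooth metric $\gst$ on the interior, a standard ODE argument shows that $L$ consists of a single point $q\in\mathcal M$ and that $\gast$ extends smoothly to $t=0$ with $\gast(0)=q$. Indeed, the geodesic flow is complete near compact subsets of the unit-speed (or fixed-energy) level: bounded affine parameter together with bounded $\dot\gast$ gives extendibility. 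Boundedness of $\dot\gast$ follows from the first integrals \eqref{syst.principal}, since $\gst_{ij}$ are bounded and nondegenerate near $L$.

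The plan is to show that $\varphi^{-1}$ then extends to an isometry from a neighbourhood of $q$ onto a neighbourhood of $\gamma(0)$, which contradicts the singularity of $\gamma(0)$. We split into two cases according to whether $q\notin\Sigma_{\mathcal M}(\mathfrak h(\gst))$ for the subalgebra $\mathfrak h(\gst)=\varphi_*\mathfrak h(g)$ (with $\mathfrak h(g)$ from Lemma \ref{lem:exist.hg}).

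\textbf{Case $q\notin\Sigma_{\mathcal M}$.} Two fields of $\mathfrak h(\gst)$ are independent at $q$. Hence some $Y\in\mathfrak h(\gst)$ has flow carrying a small neighbourhood $W\ni q$ into $V$ in a time $\tau$, namely the flow of $-Y$ transported along $\gast$. Let $X=\varphi^{-1}_*Y$, which is a global field on $M$ because it lies in $\mathfrak h(g)$. Then Lemma \ref{lemma.tricky} and Remark \ref{rem:isometry.ext} extend $\varphi$ along the flows to a local isometry defined on a neighbourhood of $\gamma(0)$. Here we need the flow of $X$ near $\gamma(0)$ to be defined, which holds for short times because $X$ is smooth on $M$. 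Conversely, starting from $\varphi^{-1}$ and the flow of $Y$, we obtain an isometry from a neighbourhood of $q$ into $M$ whose image contains $\gamma(0)$. This is legitimate because, by continuity of geodesics, $\gamma(0)=\lim\varphi^{-1}(\gast(t))$ is the image of $q$. Since this map is injective near $q$ (shrink as in the proof of Proposition \ref{prop:admissible.arc}), $\gamma(0)$ is regular, a contradiction.

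\textbf{Case $q\in\Sigma_{\mathcal M}$.} By Remark \ref{rem:Sigma.notempty}, this only happens if $\dim\mathfrak p(g)=2$ and the model is \ref{case2a}, \ref{case2b} or \ref{case2c}. Here $\mathfrak h(g)=\mathfrak p(g)$ contains no Killing field, and $\Sigma_{\mathcal M}$ is a union of lines $y=y_0$. By Lemma \ref{lem:geodesic.Sigma}, either $\gast\subset\Sigma$ or $\gast$ meets $\Sigma$ transversally at isolated points. In the transverse case, along the line $\{y=y_0\}$ the rank-one distribution of $\mathfrak h(\gst)$ is tangent to the line. Near $q$, the flow of a field $Z\in\mathfrak h(\gst)$ with $Z(q)\neq 0$ is therefore transversal to $\gast$ and sweeps a neighbourhood of $q$ from points of $\gast$ with small $t>0$. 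This works provided $Z(q)\neq0$; the points where $\mathfrak h(\gst)$ vanishes entirely must be checked from the explicit fields to be absent or to be handled as in the tangent case. Lemma \ref{lemma.tricky} then extends the isometry as before. In the tangent case, $\gast$ lies in the line and $Z$ can be chosen parallel to $\dot\gast$ near $q$, but then $Z$ only flows along the line. To get transversality we instead use the third projective field of the model: it is not a global field on $M$, but the auxiliary Appendix lemmas should allow transporting the local isometry. I expect to defer this subcase to those Appendix lemmas, and it is the main obstacle.

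The main difficulty is therefore exactly the interior accumulation at a point of $\Sigma_{\mathcal M}$, especially when $\gast$ runs inside $\Sigma_{\mathcal M}$ or when all of $\mathfrak h(\gst)$ vanishes at $q$. There the propagation Lemma \ref{lemma.tricky} gives no transversal sweep, and one needs the explicit form of the fields from Remark \ref{rem.proj.v.f.standard} to produce a full neighbourhood of $q$.
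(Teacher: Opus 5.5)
Your first case ($q\notin\Sigma_{\mathcal M}(\mathfrak h(\gst))$) is essentially the paper's argument for types \ref{case1a}--\ref{case1c}, extended to any $q$ off $\Sigma$. The case $q\in\Sigma_{\mathcal M}(\mathfrak h(\gst))$, however, has a genuine gap, and your transverse sub-case argument is wrong.

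\textbf{Why the transverse sub-case fails.} The locus $\Sigma_{\mathcal M}(\mathfrak h(\gst))$ is invariant under the local flows of $\mathfrak h(\gst)$: the flow of $Z\in\mathfrak h$ maps $\mathfrak h$ to itself, so the rank of $\mathfrak h$ is constant along orbits. Hence these flows can carry to $q$ only points of $\Sigma_{\mathcal M}$. In the transverse case, $y$ is strictly monotone along $\gast$ with limit $\bar y$. So $\gast(t)$, $t>0$, lies strictly on one side of the line $\{y=\bar y\}\ni q$. Flowing a neighbourhood of such a point by fields of $\mathfrak h(\gst)$ sweeps only that one-sided region and never reaches $q$. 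Nothing tells you that the domain of $\varphi$ meets that line.

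The same obstruction is visible on $M$. Since $\operatorname{vol}_g(X,Y)(\gamma(t))=\operatorname{vol}_{\gst}(X_{\mathrm{std}},Y_{\mathrm{std}})(\gast(t))\to 0$, the point $\gamma(0)$ lies in the non-transitivity locus of $\mathfrak h(g)$ in $M$. So Lemma~\ref{lemma.tricky} with fields of $\mathfrak h(g)$ alone cannot reach $\gamma(0)$. You need a projective field, defined on a neighbourhood of $\gamma(0)$, that crosses $\Sigma$ transversally.

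By contrast, the tangent sub-case you defer is the harmless one. Take $Z\in\mathfrak h(\gst)$ with $Z(q)\neq 0$ and flow an open neighbourhood of $\gast(t_1)$ along the line. This produces an open set containing $q$, exactly as in Proposition~\ref{prop:admissible.arc}; no transversality is needed.

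\textbf{The missing idea.} The paper uses a single argument for all of types \ref{case2a}, \ref{case2b}, \ref{case2c}, with no case split on $\Sigma$:
\begin{itemize}
\item By Table~\ref{table2}, $I\neq 0$ everywhere on $\mathcal M$. Hence $I(\gamma(0))=I(q)\neq 0$, and so $(dR)_{\gamma(0)}\neq 0$.
\item Regular points are dense and each has a local Killing field. So Lemma~\ref{prop.principal} gives a nowhere vanishing Killing field $K$ on a neighbourhood $U$ of $\gamma(0)$. What the Appendix supplies is specifically this Killing field, under the hypothesis $dR\neq0$; it is not ``the third field of the model''.
\item On the modelled part, $K$ is a constant multiple of $\partial_y$, by uniqueness of the model's Killing field. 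This field is transverse to the lines $\{y=\mathrm{const}\}$.
\item Therefore $\mathfrak p(g|_U)$, which contains $K$ and $\mathfrak h(g)|_U$, is identified either with all of $\mathfrak p(\gst)$ or with a $2$-dimensional subalgebra containing $\partial_y$. In both cases it acts transitively at $q$.
\item Lemma~\ref{lemma.tricky} only needs projective fields on the open sets involved, not global ones. So the isometry extends across $\gamma(0)$, a contradiction.
\end{itemize}

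\textbf{A smaller point.} ``Not suspicious'' means only that $\gast$ does not tend to $\partial\mathcal M$, not that it stays away from $\partial\mathcal M$. You must first show that $\lim_{t\to0^+}\gast(t)$ exists. The paper does this using Lemma~\ref{lemma.self.inters} and the boundedness of $\dot x$ from \eqref{syst.principal}. Your bound on $\dot\gast$ via $\gst_{ij}$ near $L$ presupposes $L\subset\mathcal M$.
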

\begin{proof}
In order to get our statement, we have to show that, if $\gast(t)$ is bounded as  $t\to 0^+$, then  it tends to $\partial M$, cf. Definition \ref{def:gast}.

Let $\gast(t)=(x(t),y(t))$. Since $\gast$ is assumed to be bounded in $\R^2$ as  $t\to 0^+$, both $x(t)$ and $y(t)$ are also bounded. By Lemma \ref{lemma.self.inters}, $y(t)$ is either constant or strictly monotone. Hence
the limit $\lim_{t\to 0^+}y(t)$ exists, it is finite and it will be denoted by $\bar{y}$.
In view of \eqref{syst.principal}, we have
$$
\dot x(t)^{2}=\frac{\gst_{22}( x(t))\, C_2-C_1^2}{\gst_{22}( x(t))\; \gst_{11} (x(t))}\,.
$$
Since, by assumption, $x(t)$ is bounded on $(0,1]$, in view of the above formula, considering that the denominator is nowhere vanishing on $\R^2$, it follows that $\dot x(t)$ is also bounded on $(0,1]$. Therefore, $\lim_{t\to 0^+}x(t)$ exists, it is finite and it will be denoted by $\bar{x}$.
Thus, 
$$
\lim_{t\to 0^+}\gast(t)=(\bar{x},\bar{y})\in\R^2\,.
$$
Now assume by contradiction that $(\bar{x},\bar{y})\in\mathcal M$.

If points of $\gast$ are of type $\ref{case1a}$, $\ref{case1b}$ or $\ref{case1c}$, then in the corresponding standard model there exist two projective vector fields $X_{\mathrm{std}},Y_{\mathrm{std}}$ which are non-proportional at every point of $\mathcal M$.
In particular, $(X_{\mathrm{std}}\wedge Y_{\mathrm{std}})(\gast(t))\neq 0$ for any $t$.
Pulling back $X_{\mathrm{std}},Y_{\mathrm{std}}$ along the isometry defined near the part of $\gamma$ consisting of regular points,
we obtain two projective vector fields on $M$ which are non-proportional along $\gamma((0,1])$. By continuity, they are non-proportional at $\gamma(0)$ as well.
Lemma \ref{lemma.isometry.1} then yields a neighbourhood of $\gamma(0)$ isometric to a standard model,
contradicting its singularity.

Assume now that points of $\gast$ are of type $\ref{case2a}$, $\ref{case2b}$ or $\ref{case2c}$.
By Table \ref{table2}, the length of the differential of the curvature is non-zero at every point where the metric is defined, in particular at $(\bar{x},\bar{y})$.
By the isometry from the regular part of $\gamma$ into the standard model, and by continuity, the same holds at $\gamma(0)$. Hence, by Lemma \ref{prop.principal} contained in the Appendix, there exists a nowhere vanishing Killing vector field in a neighbourhood $U$ of $\gamma(0)$.
{In particular, $\mathfrak p(g|_U)$ contains a Killing field. Via the isometry with a standard model, it can be identified with a Lie subalgebra of $\mathfrak p(\gst)$. By Remark \ref{rem:Sigma.notempty}, any Lie subalgebra containing a Killing field acts locally transitively. Hence $\mathfrak p(g|_U)$ acts locally transitively at $\gamma(0)$.}
Applying Lemma \ref{lemma.isometry.1}, after possibly restricting to a neighbourhood of $\gamma((0,\varepsilon])$ with $\varepsilon \leq 1$, the isometry of the regular portion of $\gamma$ extends across $\gamma(0)$, contradicting its singularity.
\end{proof}
Thus, suspicious geodesics are the only possible candidates for the realizations, in standard models, of modelled geodesic arcs issuing from singular points; however, this condition is only necessary.
 Indeed, for instance, along such a realization, all  scalar invariants of $g$ must remain bounded.
This provides a restriction that will be used repeatedly in the subsequent analysis.  To this end, we will mainly work with the following scalar invariants:
\begin{enumerate}
\item the scalar curvature $R$
of $g$;
\item the square of the length of the differential of the scalar curvature
    $I=\sum_{i,j} g{}^{ij}
    \frac{\partial R}{\partial x^i}\frac{\partial R}{\partial
    x^j}$;
\item the laplacian of the scalar curvature $\Delta
R=\frac{1}{\sqrt{\det(g{})}} \sum_{i,j} \frac{\partial }{\partial
x^i} \left( g^{ij} \sqrt{\det(g)}\frac{\partial R}{\partial
x^j}\right)$.
\end{enumerate}
For the standard metrics of Theorem \ref{th.Bryant.Manno.Matveev}, we computed the scalar invariants $R$, $I$, and $\Delta R$, and collect their expressions in the tables below for the reader’s convenience.

\begin{table}[ht]
\centering
\small
\renewcommand{\arraystretch}{1.1}
\setlength{\tabcolsep}{4pt}
\begin{tabular}{@{}cccc@{}}
\toprule
& Metric $\ref{case1a}$ & Metric $\ref{case1b}$& Metric $\ref{case1c}$ \\
\midrule
$R$ 
& $\epsilon_{1} b e^{-(b+2)x}$
& $\dfrac{\epsilon_{2} b e^{-(b+2)x}\bigl((b+2)e^{bx}+2\epsilon_{2}\bigr)}{2a}$
& $-\dfrac{e^{-2x}(2x+1)}{2a}$ \\[0.6em]
\midrule
$I$
& $\epsilon_{1} e^{-3(b+2)x} b^{2}(b+2)^{2}$
& $\dfrac{b^{2}(b+2)^{2}e^{-3(b+2)x}(e^{bx}+\epsilon_{2})^{4}}{a^{3}}$
& $\dfrac{4x^{4}e^{-6x}}{a^{3}}$ \\[0.6em]
\midrule
$\Delta R$
& $b(b+2)(b+3)e^{-2(b+2)x}$
& $-\dfrac{\epsilon_{2}b(b+2)(e^{bx}+\epsilon_{2})^{2}e^{-2(b+2)x}
\bigl(be^{bx}-6e^{bx}-6\epsilon_{2}-2b\epsilon_{2}\bigr)}{2a^{2}}$
& $-\dfrac{3x^{2}e^{-4x}(2x-1)}{a^{2}}$ \\
\bottomrule
\end{tabular}
\caption{Scalar invariants $R$, $I$ and $\Delta R$ for the standard metrics $\ref{case1a}$, $\ref{case1b}$ and $\ref{case1c}$.}
\label{table1}
\end{table}

\medskip
\begin{table}[ht]
\centering
\small
\renewcommand{\arraystretch}{1.1}
\setlength{\tabcolsep}{4pt}
\begin{tabular}{@{}cccc@{}}
\toprule
& Metric $\ref{case2a}$ & Metric $\ref{case2b}$& Metric $\ref{case2c}$ \\
\midrule
$R$ 
& $\epsilon_{1}e^{-3x}$
& $\dfrac{\epsilon_{2}(3e^{-2x}+2\epsilon_{2}e^{-3x})}{2a}$
& $\dfrac{8x^{3}+6cx^{2}+12\epsilon_{2}x+\epsilon_{2}c}{2a}$ \\[0.6em]
\midrule
$I$ 
& $9\epsilon_{1}e^{-9x}$
& $9\dfrac{(e^{x}+\epsilon_{2})^{4}e^{-9x}}{a^{3}}$
& $36\dfrac{x(2x^{2}+cx+\epsilon_{2})^{4}}{a^{3}}$ \\[0.6em]
\midrule
$\Delta R$ 
& $12e^{-6x}$
& $3\dfrac{\epsilon_{2}(e^{x}+\epsilon_{2})^{2}e^{-6x}(5e^{x}+8\epsilon_{2})}{2a^{2}}$
& $3\dfrac{(2x^{2}+cx+\epsilon_{2})^{2}(2\epsilon_{2}+16x^{2}+5cx)}{a^{2}}$ \\
\bottomrule
\end{tabular}
\caption{Scalar invariants $R$, $I$ and $\Delta R$ for the standard metrics  $\ref{case2a}$, $\ref{case2b}$ or $\ref{case2c}$.}
\label{table2}
\end{table}
We now record a further restriction on suspicious geodesics which are realizable as image under isometries of modelled geodesics issuing from singular points:  their suspicious behaviour can be investigated by the first coordinate alone.
\begin{Lemma}\label{lem:reduction.x}
Let  $(M,g)$  be a pseudo-Riemannian manifold with $\dim\mathfrak{p}(g)\geq 2$.
Let $\gast(t)=(x(t),y(t))$, be the image under an isometry in a standard model $(\mathcal M,\gst)$ of a modelled geodesic issuing from a singular point of $M$. Then, as $t\to 0^+$, either
$$
x(t)\to \pi_x(\partial\mathcal M)
\qquad\text{or}\qquad
x(t)\to \pm\infty,
$$
where $\pi_x:\mathbb R^2\to\mathbb R$ denotes the projection onto the first coordinate.
\end{Lemma}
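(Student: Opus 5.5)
We combine Proposition \ref{lem:no-interior-accumulation}, which says that $\gast$ is suspicious, with the structure of the standard models. In every model of Theorem \ref{th.Bryant.Manno.Matveev} the metric coefficients depend only on $x$. Hence $\mathcal M$ is (a union of) strips $J\times\R$, where $J\subseteq\R$ is the open set on which the coefficients are defined and nondegenerate. So $\partial\mathcal M=\partial J\times\R$ and $\pi_x(\partial\mathcal M)=\partial J$, a finite set of points: the zeros of $e^{bx}+\veps_2$, or $x=0$, or the zeros of $2x^2+cx+\veps_2$. We split into two cases, according to whether $x(t)$ is bounded as $t\to0^+$.

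\emph{Case 1: $x(t)$ bounded.} We claim $x(t)$ has a limit. By \eqref{syst.principal}, $\dot x^2=(\gst_{22}C_2-C_1^2)/(\gst_{22}\gst_{11})$. Suppose first that $x(t)$ stays in a compact subset of $J$. Then $\dot x$ is bounded, so $x(t)$ converges to some $\bar x$. If $y(t)$ is also bounded, then Proposition \ref{lem:no-interior-accumulation} gives $\gast(t)\to\partial\mathcal M$, forcing $\bar x\in\partial J$, which contradicts compactness in $J$. So $y(t)$ must be unbounded while $x$ stays in a compact subset of $J$. We rule this out as follows. By the first integral, $\dot y=C_1/\gst_{22}(x)$ is bounded, and the parameter interval is finite, so $y$ is bounded, a contradiction. (Alternatively, $y$ is monotone by Lemma \ref{lemma.self.inters}, and a bounded derivative on a finite interval keeps it bounded.)

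The remaining subcase is that $x(t)$ is bounded but leaves every compact subset of its component of $J$. Since $x(t)$ is continuous and that component is an interval, the only way out is to approach an endpoint. What must be checked is that $x$ does not oscillate, i.e.\ that it does not accumulate at two different boundary points. It cannot, because between two such points it would have to cross the interior of $J$, and by connectedness the whole curve lies in a single component of $J$. So we need to show $x(t)$ converges rather than oscillating between the interior and the boundary. Here we use that $\dot x^2$ is an explicit function $\Phi(x)$. Turning points of $x(t)$ occur only at zeros of $\Phi$, and these are isolated. Between turning points $x$ is monotone. If there were infinitely many turning points as $t\to0^+$, then $x$ would sweep repeatedly across a fixed compact interval on which $|\dot x|$ is bounded. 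Each sweep then costs a definite time, which is impossible on a finite parameter interval. Hence $x$ is eventually monotone and converges, and by the argument above its limit lies in $\partial J=\pi_x(\partial\mathcal M)$.

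\emph{Case 2: $x(t)$ unbounded.} Again $x$ is eventually monotone, by the same turning-point argument: infinitely many oscillations across any fixed compact range in finite time are impossible. Hence $x(t)\to\pm\infty$.

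The main obstacle is this "eventual monotonicity" step, i.e.\ excluding infinite oscillation of $x(t)$. The plan is to handle it uniformly through the one-dimensional reduction $\dot x^2=\Phi(x)$, with $\Phi$ real-analytic on $J$, together with the finiteness of the affine parameter interval.
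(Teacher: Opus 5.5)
\textbf{Verdict.} Your proof is correct. It obtains the key step, convergence of $x(t)$ as $t\to0^+$, by a different mechanism from the paper's.

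\textbf{The paper's route.} The paper argues by contradiction using curvature. $R(\gamma(t))$ has a finite limit because $R$ is continuous on $M$. In every standard model, $R$ depends only on $x$ and is strictly monotone on each component of $\mathcal M$ (Tables \ref{table1} and \ref{table2}), so $x(t)$ cannot oscillate. The case of finite $\lim y$ is then closed by Proposition \ref{lem:no-interior-accumulation}, and the case $|y|\to\infty$ through $\dot y=C_1/\gst_{22}(x)$.

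\textbf{Your route.} You use only the first integrals \eqref{syst.principal} and the finiteness of the affine parameter. $|\dot x|$ is bounded on compact subintervals of $J$, so $x$ cannot cross such an interval infinitely often. Likewise $\dot y$ stays bounded while $x$ remains in a compact subset of $J$.

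\textbf{What each buys.} Your argument makes the lemma independent of the curvature tables. It also treats the unbounded-$y$ case more carefully than the paper. There, the deduction ``$\gst_{22}(x(t))\to0$, namely $x(t)\to\pi_x(\partial\mathcal M)$'' holds only along a sequence. It also overlooks that $\gst_{22}$ may tend to $0$ as $x\to\pm\infty$ (e.g.\ $\gst_{22}=\veps_2e^{bx}$ in model \ref{case1a}). The paper's route is shorter once the tables are available.

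\textbf{Two small points on your write-up.}
\begin{enumerate}
\item The turning-point discussion can be replaced by a one-line argument. If $\liminf_{t\to0^+}x<\limsup_{t\to0^+}x$, choose $[p,q]$ strictly between them inside the component $J_0$ of $J$ containing the curve. Each of the infinitely many crossings of $[p,q]$ takes time at least $(q-p)/\max_{[p,q]}\sqrt{\Phi}$, which is impossible on a finite parameter interval. This also spares you from justifying that infinitely many isolated turning points force sweeps across a \emph{fixed} interval.
\item Your sentence that $x$ cannot accumulate at two different boundary points ``by connectedness'' is false when $J_0$ is a bounded interval whose two endpoints both lie in $\partial J$. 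This happens in model \ref{case2c}, between $0$ and a root of $2x^2+cx+\veps_2$. The crossing argument already covers this situation, so the proof does not break.
\end{enumerate}
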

\begin{proof}
Assume by contradiction that $x(t)$ neither tends to $\pi_x(\partial\mathcal M)$ nor to $\pm\infty$ as $t\to0^+$. 
Since, for geodesics of standard models, the function $y(t)$ is either constant or strictly monotone (see Lemma \ref{lemma.self.inters}), the limit $\lim_{t\to0^+} y(t)$ always exists.

\smallskip
If $y(t)$ admits a finite limit, then $x(t)$ cannot admit any limit as $t\to0^+$. Indeed, by assumption, $|x(t)|$ does not tend to $+\infty$. Moreover, if it converged to some finite value $\bar x\in\mathbb R$, namely
$$
\gast(t)=(x(t),y(t))\to (\bar x,\bar y)\in \R^2,
$$
being $\gast$  suspicious, such limit point could not belong to $\mathcal M$. Hence necessarily $(\bar x,\bar y)\in\partial\mathcal M$, that is, $\bar x\in\pi_x(\partial\mathcal M)$, contradicting our assumption. Therefore $x(t)$ admits no limit.

Since, for all standard models, the scalar curvature $R$ depends only on $x$ and is monotone on each connected component of $\mathcal M$  (cf. Tables \ref{table1} and \ref{table2}), it follows that $R(\gast(t))$ cannot admit a finite limit as $t\to0^+$. This contradicts the aforementioned necessary condition about scalar invariants.

\smallskip
Assume now that $|y(t)|\to +\infty$ as $t\to0^+$. Hence,  $\dot y(t)$ cannot remain bounded as $t\to 0^+$. By considering the first equation of \eqref{syst.principal}, it follows that $\gst_{22}(x(t))\to 0$ as $t\to 0^+$, namely
$
x(t)\to \pi_x(\partial\mathcal M)$,
a contradiction.

\smallskip
Since in both cases we arrive at a contradiction,  the claim follows.
\end{proof}

\subsection{On the extendability to singular points of metrics of Theorem \ref{th.Bryant.Manno.Matveev}}\label{sec.extendability}

In this section, as a first step, we use the results obtained so far (together with those of the Appendix),
to select which standard metrics are candidates to be glued in a larger domain containing singular points. We will do it by studying metric invariants, introduced in the previous section, along suspicious geodesics.

The first result is proved in Section \ref{sec.1c.2a.2b.not}, where we show that if $(M,g)$ contains regular points of type  \ref{case1c}, \ref{case2a} or \ref{case2b}, then it cannot contain singular ones. 

The remainder of the section is devoted to the remaining cases, to which we apply the same strategy:
we study the behaviour of metric invariants, also along canonically constructed vector fields,
to obtain necessary restrictions on the possible types of regular points and on the parameters of the standard metrics which can occur near singular points. The global conclusion that regular points are all of the same type will be proved in Section \ref{sec.regular.singular}.

\subsubsection{Non-extendability to singular points of metrics of type \ref{case1c}, \ref{case2a}  and \ref{case2b}}\label{sec.1c.2a.2b.not}

\begin{Prop}\label{impossible}
Let $(M,g)$ be a $2$-dimensional pseudo-Riemannian manifold with $\dim\mathfrak{p}(g)\ge 2$. Assume that $M$ contains regular points admitting a neighbourhood isometric to an open subset of one of the standard models \ref{case1c}, \ref{case2a} or \ref{case2b}. Then $M$ has no singular points. Furthermore, all regular points of $M$ are locally isometric to the same standard model.
\end{Prop}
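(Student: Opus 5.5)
The plan is to argue by contradiction: assume $M$ has a singular point while containing regular points of type \ref{case1c}, \ref{case2a} or \ref{case2b}. The second claim then follows from the first. If $M$ has no singular points, Remark \ref{lem:several.reg} together with Lemma \ref{lem:regular_boundary} forces all regular points into a single set $M_{\mathcal M}$ (or $M_{\mathrm{cc}}$). That single set must be the one of the given type, because distinct parameter values give distinct standard models and would again split $M$ into disjoint open sets.

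For the first claim, consider the connected component $A$ of $M_{\rm reg}$ containing a point of the given type. By Lemma \ref{lem:regular_boundary}, $A\subset M_{\mathcal M}$ for that model. Since $M$ is connected and has singular points, $\overline A$ meets the singular locus. Choose $r\in A$ close to a singular point $s\in\partial A$, and take the last singular point along a geodesic from $s$ to $r$, exactly as in Proposition \ref{prop:admissible.arc.2}. Proposition \ref{prop:admissible.arc} then gives a modelled geodesic $\gamma$ issuing from a singular point with $\gamma((0,1])\subset A$. By Proposition \ref{lem:no-interior-accumulation} its image $\gast$ is a suspicious geodesic. By Lemma \ref{lem:reduction.x}, $x(t)\to\pi_x(\partial\mathcal M)$ or $x(t)\to\pm\infty$ as $t\to0^+$. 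Along $\gast$ the invariants $R$, $I$, $\Delta R$ must stay bounded, since they are continuous functions on $M$ near $\gamma(0)$.

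The core is a case check using Tables \ref{table1} and \ref{table2}.

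\textbf{Type \ref{case2a}.} Here $\mathcal M=\R^2$, so $x\to\pm\infty$. As $x\to-\infty$, $R=\veps_1e^{-3x}$ blows up. As $x\to+\infty$, all invariants tend to $0$, so this end needs a separate argument. I would show it lies at infinite affine distance. From \eqref{syst.principal}, $\dot x^2=(e^{x}C_2-C_1^2)e^{-4x}$ up to signs, so $|\dot x|\lesssim e^{-3x/2}$, and reaching $x=+\infty$ takes infinite parameter. Horizontal geodesics behave the same way. This contradicts $\gamma$ being defined on $[0,1]$.

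\textbf{Type \ref{case2b}.} The boundary consists of the zeros of $e^x+\veps_2$ (when $\veps_2=-1$, namely $x=0$) together with $x\to\pm\infty$. As $x\to-\infty$, $R\sim e^{-3x}$ blows up. At $x\to0$ with $\veps_2=-1$, $R\to\frac{-1}{2a}(3-2)\neq\infty$, so the curvature stays bounded there and one must instead use the metric degenerating: $\gst_{11}\sim e^{3x}/(e^x-1)^2$ blows up. I would check that either the distance to $x=0$ is infinite, since $\int\sqrt{|\gst_{11}|}\,dx\sim\int dx/|x|$ diverges, or that a higher invariant fails; the divergence of the integral should suffice. The end $x\to+\infty$ is handled by the affine parameter estimate as in type \ref{case2a}.

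\textbf{Type \ref{case1c}.} As $x\to-\infty$, $R\sim -e^{-2x}(2x+1)$ blows up. The point $x=0$ (where the metric $e^{2x}dx^2/x^2$ blows up) is at infinite distance because $\int dx/|x|$ diverges. The end $x\to+\infty$ needs the parameter estimate: $\dot x^2\approx x^2e^{-2x}\cdot\mathrm{const}$, which gives infinite affine length.

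The main obstacle is making the affine-parameter (infinite distance) arguments rigorous for the ends where $R$, $I$ and $\Delta R$ stay bounded. This must be done uniformly for non-horizontal geodesics with arbitrary $C_1,C_2$, including null geodesics in the Lorentzian case, where $C_2=0$ and the signs change. My first attempt there would bound $|\dot x|$ above by the explicit formula from \eqref{syst.principal} and integrate $dt=dx/\dot x$. Should an endpoint survive this test, I would fall back on Lemma \ref{prop.principal}: $I\neq0$ along $\gast$ near the limit gives a nonvanishing Killing field near $\gamma(0)$, and the extension argument of Proposition \ref{lem:no-interior-accumulation} then yields a contradiction.
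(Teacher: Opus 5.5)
Your proposal is correct and follows essentially the same route as the paper. You argue by contradiction to obtain a modelled geodesic into the given model (the paper simply cites Corollary~\ref{cor:different.types}), reduce via Lemma~\ref{lem:reduction.x} to the behaviour of $x(t)$, rule out $x\to-\infty$ by the blow-up of $R$ and the remaining ends by divergence of the affine parameter computed from \eqref{syst.principal}, and get the last claim from Remark~\ref{lem:several.reg}. The only deviation is at $x\to0$ for type \ref{case1c}: the paper uses the blow-up of the length $\gst_{22}=a\veps/x$ of the global Killing field, whereas your infinite-distance argument also works, because $|\gst_{11}|\dot x^{2}=|C_2-C_1^2/\gst_{22}|$ stays bounded there. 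As a minor correction, for $C_1\neq0$ one has $\dot x^2\sim x^3e^{-2x}$ as $x\to+\infty$ rather than $x^2e^{-2x}$, which still gives infinite affine length.
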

\begin{proof}
Assume by contradiction that $M$ admits singular points. By Corollary \ref{cor:different.types}, there exists a modelled geodesic whose image $\gast(t)=(x(t),y(t))$ lies in one of the standard models $\mathcal M$ of type \ref{case1c}, \ref{case2a} or \ref{case2b} and reaches a singular point.
By Lemma \ref{lem:reduction.x}, as $t\to0^+$, either
$x(t)\to \pi_x(\partial\mathcal M)$
or $x(t)\to \pm\infty$.
We analyse these possibilities case by case.

\smallskip
\noindent\textbf{Case of metrics \ref{case1c}.}

\noindent
Here  $\mathcal M=\mathbb R^2\setminus\{x=0\}$.
In this case $\dim\mathfrak{p}(\gst)=2$, hence $\dim\mathfrak{p}(g)=2$. 
Since the standard model admits a Killing vector field, $M$ carries a projective vector field that is locally Killing. By Proposition \ref{homo}, this field is actually globally Killing, and its length defines a scalar invariant of $g$ that diverges as $x \to 0$.
Therefore, this invariant is unbounded along any geodesic approaching the line $x=0$, contradicting the boundedness of scalar invariants along modelled geodesics.
As $x\to -\infty$, the scalar curvature $R$ diverges (cf. Table \ref{table1}).
Since, by \eqref{syst.principal}, one has
$$
\dot x^{\, 2}=\frac{\left( aC_2-\varepsilon C_1^2x\right) x^{2}}{a^2\, e^{2x}},
$$
the affine parameter is unbounded as $x\to+\infty$, so these geodesics are not suspicious.
Thus, no suspicious geodesic can arise as the image of a modelled geodesic.

\smallskip
\noindent\textbf{Case of metrics \ref{case2a}.}

\noindent
The standard model is defined on the whole $\mathbb{R}^2$.
As $x \to -\infty$, the scalar curvature $R$ diverges (cf. Table \ref{table2}). On the other hand,
it follows from  \eqref{syst.principal}  that
$$
\dot x^{\, 2}=\frac{\varepsilon_1\left( C_2 e^x -\varepsilon_2 C_1^2\right)}{e^{4x}}.
$$
Therefore, the affine parameter is unbounded as $x\to+\infty$.
Thus, no suspicious geodesic can arise as the image of a modelled geodesic.

\smallskip
\noindent\textbf{Case of metrics \ref{case2b}.}

\noindent
If $\epsilon_2=-1$, then $\mathcal M=\mathbb R^2\setminus\{x=0\}$; otherwise $\mathcal M=\mathbb R^2$.
As $x\to -\infty$, the scalar curvature $R$ diverges (cf. Table \ref{table2}). Moreover, as $x\to 0$ or $x\to +\infty$, the affine parameter is unbounded. Indeed, in view of  \eqref{syst.principal}, one has
$$
\dot x^{\,2}= \left(\frac{e^x+\varepsilon_2}{a\, e^{2x}}\right)^2\,\left( a C_2 e^x -\varepsilon_1 C_1^2(e^x+\varepsilon_2)\right).
$$
Thus, no suspicious geodesic can arise as the image of a modelled geodesic.

\medskip
This shows that regular points locally isometric to the standard models \ref{case1c}, \ref{case2a} or \ref{case2b} cannot occur in the presence of singular points.
Moreover, the final statement follows from Remark \ref{lem:several.reg}.
\end{proof}

\subsubsection{On the extendability to singular points of metrics of type \ref{case1a}}\label{sec:1a}


In this section we  analyse scalar invariants along modelled geodesics approaching singular points and then we determine the admissible parameters of regular points of type \ref{case1a} under the assumption that also singular points are present on $M$. 

We preliminarily notice that, since $(M,g)$ contains regular points of type \ref{case1a}, necessarily $\dim \mathfrak{p}(g)=2$ as every standard model of type \ref{case1a} has a $2$-dimensional projective algebra. In particular, $(M,g)$ admits a global Killing vector field $K$, unique up to a multiplicative constant.
Moreover, the projective algebra of the model \ref{case1a} contains also a non-Killing homothety. Hence, in view of Proposition \ref{homo}, $(M,g)$ admits a global non-Killing homothety.

\smallskip
To sum up, \emph{$\mathfrak p(g)$ is generated by a Killing vector field and a non-Killing homothety}.

\medskip
\noindent
In what follows, $\gamma$ will denote a modelled geodesic of $(M,g)$  issuing from a singular point,  whose image $\gast(t)=(x(t),y(t))$ lies in a standard model $(\mathcal M,\gst)$ of type \ref{case1a}.
Since $\mathcal M=\R^2$, the geodesic $\gast$ must be escaping. Also, in view of Lemma \ref{lem:reduction.x}, one necessarily has $\lim_{t\to 0^+}x(t)=\pm\infty$.
In order to understand whether singular points can occur, we first study the behaviour of the invariants listed in Table \ref{table1} along $\gast$. In particular, $R$ diverges in the following cases:
$$
 b<-2 \ \text{and}\  x\to +\infty
\quad\text{or}\quad
 b>-2 \ \text{and}\  x\to -\infty.
$$
In order to further restrict the admissible cases, we now take into account \eqref{syst.principal}. Eliminating $\dot y$ from the second equation, we obtain that, along $\gast$,
$$
\dot x^{\,2}
=
\frac{C_2\,\varepsilon_2 e^{b x}-C_1^{2}}
{\varepsilon_1\varepsilon_2\, e^{2(b+1)x}}.
$$
Combining the requirement that the affine parameter be finite with the previous restrictions, we conclude that the only possible case is
\begin{equation}\label{eq:cond.1a}
-2<b<-1
\qquad \text{and} \qquad
x\to +\infty.
\end{equation}
Moreover, in this range one necessarily has $C_1\neq 0$. Indeed, if $C_1=0$, then the affine parameter is unbounded. In particular, along any geodesic $\sigma$ of $(M,g)$ containing at least one regular point of type  \ref{case1a}, one has $g(\dot\sigma,K)=:C_1\neq 0$. 
Hence, \emph{along any such geodesic, the Killing vector field $K$ is nowhere vanishing}.

\smallskip
When condition \eqref{eq:cond.1a} holds, the length of the Killing vector field $\partial_y$ along $\gast$ tends to zero. If the metric $g$ is Riemannian, this implies that $K$ vanishes at the singular point $\gamma(0)$. This contradicts our previous observation. 
Hence, \emph{the Riemannian case is excluded, and from now on we restrict to Lorentzian metrics}.

\smallskip
Now, in order to further restrict the admissible values of the parameter $b$, we exploit the Lorentzian structure. Being $K(\gamma(0))$ non-zero and isotropic, we can choose a vector field $Z$ defined in a neighbourhood of $\gamma$ such that
\begin{equation}\label{eq:ZZ0.ZK1}
g(Z,Z)=0, \quad g(Z,K)=1.
\end{equation}
Since the scalar curvature $R$ is a smooth function on $M$, it follows that, for every $m\in\mathbb N$, the function
$$
Z^m(R):=\underbrace{Z(Z(\dots Z(R)\dots))}_{m\text{-times}}
$$
is well-defined and smooth. In particular, we can choose $Z$ in such a way that, along the regular part of $\gamma$, it reads
\begin{equation}\label{eq:Z.choice}
Z = e^{-(b+1)x}\,\partial_x - \varepsilon_1 e^{-bx}\, \partial_y.
\end{equation}
A direct computation shows that
\begin{equation}\label{july}
Z^m(R)= \varepsilon_1 b \prod_{h=1}^{m}(hb+h+1)\; e^{-\left((m+1)b+m+2\right)x}.
\end{equation}
Since $-2<b<-1$, the exponent in \eqref{july} is positive for $m$ sufficiently large. Hence, boundedness of $Z^m(R)$ along $\gast$  as $x\to +\infty$ forces
\begin{equation}\label{eq:b.h}
b=-\frac{h+1}{h}, \qquad h\in\mathbb N\setminus\{1\}.
\end{equation}

\subsubsection{On the extendability to singular points of metrics of type \ref{case1b}}\label{sec:1b}

In this section we treat the case in which the regular locus of $(M,g)$ contains points of type \ref{case1b}.
We follow the same general strategy as in Section \ref{sec:1a}.

\smallskip\noindent
Throughout the section, $\gamma$  denotes a modelled geodesic of $(M,g)$  issuing from a singular point, whose image $\gast(t)=(x(t),y(t))$ lies in a standard model $(\mathcal M,\gst)$ of type \ref{case1b}.

\medskip\noindent
We preliminarily notice that, since $(M,g)$ contains regular points of type \ref{case1b}, necessarily $\dim \mathfrak{p}(g)=2$. Indeed, every standard model of type \ref{case1b} has $2$-dimensional projective algebra. In particular, by considering Proposition \ref{homo}, $(M,g)$ admits a globally defined Killing vector field $K$, unique up to a multiplicative constant.

%
%
\medskip
We first observe that, if $\varepsilon_2 = 1$, the standard model $\mathcal M$ is defined on the whole $\R^2$, whereas if $\varepsilon_2 = -1$, one has $\mathcal M = \R^2 \setminus \{x = 0\}$.
 In the latter case, Lemma \ref{lem:reduction.x} allows, a priori, the possibility that $x(t) \to 0$ as $t \to 0^+$. We now exclude this possibility. Indeed, the standard model admits the Killing vector field $\partial_y$, whose squared length is $\gst_{22}(x)$, and $\gst_{22}(x) \to \infty$ as $x \to 0$.  Since the squared length of the globally defined Killing field is a smooth function on $M$, it must remain bounded along a modelled geodesic approaching a singular point.
Therefore, $\gast$ cannot approach the line $\{x=0\}$.
Thus, $\lim_{t \to 0^+} x(t) = \pm \infty$.
We now impose the boundedness of scalar invariants along $\gast$. The case $x(t) \to -\infty$ is excluded, since the scalar curvature $R$ diverges as $x \to -\infty$ (cf. Table \ref{table1}).
From Table \ref{table1}, we also notice that $R$ admits a finite limit as $x \to +\infty$ if and only if $b > -2$. To refine this condition, we consider also the Laplacian $\Delta R$. One obtains that it converges as $x \to +\infty$ for
$$
-2 < b < 0, \quad 0 < b < 1, \quad 1 < b \leq 4, \quad \text{or} \quad b = 6.
$$
We now combine these conditions with \eqref{syst.principal}.
From the second equation of \eqref{syst.principal}, we obtain that, along $\gast$,
$$\dot x^{\,2}
=
\frac{(e^{bx}+\varepsilon_2)^2}{a\,e^{(b+2)x}}
\left(
C_2-\frac{\varepsilon_1 C_1^2\,(e^{bx}+\varepsilon_2)}{a\,e^{bx}}
\right)\,.
$$
Requiring the affine parameter to remain finite as $x \to +\infty$ yields the following alternatives:
\begin{align}
\text{either} \qquad 2 < b \leq 4 \ \text{or} \ b = 6, \qquad aC_2 - \varepsilon_1 C_1^2 > 0,
\label{cond:1b-case1}
\\[0.3ex]
\text{or } \qquad-2 < b < -1, \qquad C_1 \neq 0, \qquad \varepsilon_1 \varepsilon_2 = -1.
\label{cond:1b-case2}
\end{align}
Below we shall analyse these two cases separately.

\bigskip\noindent\textbf{The case corresponding to \eqref{cond:1b-case1}.}
In this situation, \emph{the Killing vector field $K$ does not vanish at the singular point $\gamma(0)$} as its length admits a finite non-zero limit along $\gamma$ as $t\to 0^+$.
To further restrict the admissible values of $b$, we introduce a vector field $Z$ defined near $\gamma$ such that
\begin{equation}\label{eq:Z.1b.I.global}
g(Z,K)=0 \,,\quad |g(Z,Z)|=1.
\end{equation}
Such a vector field is well-defined up to the sign of the component transversal to $K$. In particular, on the regular part of $\gamma$, in the coordinates of the standard model, $Z$ can be written explicitly as
\begin{equation}\label{eq:Z.1b.I}
Z =  \frac{\delta}{\sqrt{|a|}} \, \frac{e^{bx} + \varepsilon_2}{e^{\frac{b+2}{2}x}} \, \partial_x \,, \quad \delta\in\{-1, 1\}.
\end{equation}
We now consider the iterated derivatives $Z^h(R)$ of the scalar curvature $R$ along $Z$.
A direct computation shows that,
for every $h\in\N$, there exists a polynomial $P_h$ such that
\begin{equation}\label{eq:ZhR.1b.I}
Z^h(R)=
\frac{\delta^h\varepsilon_2 b}{2a\,|a|^{h/2}}\,
e^{\frac{h(b-2)-4}{2}x}\,
P_h(e^{-bx}).
\end{equation}
These polynomials $P_h$ are defined recursively by
$$
\begin{cases}
P_0(t)=b+2+2\varepsilon_2 t&\\
P_{h+1}(t)
=
(1+\varepsilon_2 t)
\left(
\frac{h(b-2)-4}{2}\,P_h(t)
-
bt\,P_h'(t)
\right)&\\
\end{cases}
$$
Since, as $x\to +\infty$,
$$
P_h(e^{-bx})\to P_h(0)=
\frac{b+2}{2^h}
\prod_{j=0}^{h-1}\bigl(j(b-2)-4\bigr),
$$
boundedness of $Z^h(R)$ necessarily requires that, for a suitable $h\in\N$,
\begin{equation}\label{eq:b.1b.I}
b =2+ \frac{4}{h}.
\end{equation}

\bigskip\noindent\textbf{The case corresponding to \eqref{cond:1b-case2}.}
Since $x \to +\infty$ as $t\to 0^+$, condition $\varepsilon_1\varepsilon_2=-1$ is equivalent to requiring that the metric be \emph{Lorentzian}.
Moreover, taking into account that
$g\bigl(K(\gamma(0)), K(\gamma(0))\bigr)=0$
and considering that along $\gamma$ one has $g(\dot\gamma, K) =: C_1 \neq 0$,
it follows that \emph{$K(\gamma(0))$ is  non-zero and isotropic}. Therefore, we can choose a vector field $Z$ defined in a neighbourhood of $\gamma$ such that
\begin{equation}\label{eq:Z.1b.II.global}
g(Z,Z)=0\,, \quad g(Z,K)=1.
\end{equation}
There are two possible choices for the vector field $Z$. On the regular part of $\gamma$, in the coordinates of the standard model, they read as
\begin{equation}\label{eq:Z.1b.II}
Z
=
\delta\, \frac{(1-\varepsilon_1 e^{bx})^{\frac32}}{a\,e^{(b+1)x}}\,\partial_x
+
\frac{\varepsilon_1-e^{-bx}}{a}\,\partial_y\,, \quad \delta\in\{-1, 1\}.
\end{equation}
A direct computation shows that, for every $h \in \N$,
\begin{equation}\label{eq:ZhR.1b.II}
Z^h(R)=
\frac{\delta^h\, b}{2a^{h+1}}
e^{-\bigl((b+2)+h(b+1)\bigr)x}
\left(1-\varepsilon_1 e^{bx}\right)^{\frac h2}
Q_h\!\left(\varepsilon_1 e^{bx}\right),
\end{equation}
where the polynomials $Q_h$ are recursively defined by
$$
\begin{cases}
Q_0(t)=2-(b+2)t &\\
Q_{h+1}(t)
=
bt(1-t)Q_h'(t)
+
\Bigl(
-(b+2+h(b+1))(1-t)-\tfrac{bh}{2}\,t
\Bigr)Q_h(t)&\\
\end{cases}
$$
Since, as $x\to +\infty$,
$$
Q_h\!\left(\varepsilon_1 e^{bx}\right)\to Q_h(0)=2(-1)^h\prod_{j=0}^{h-1}\bigl(b+2+j(b+1)\bigr),
$$
boundedness of $Z^h(R)$ necessarily requires that, for a suitable $h\in\N$,
\begin{equation}\label{eq:b.1b.II}
(b+2)+h(b+1)=0
\quad \Longrightarrow \quad
b=-\frac{h+2}{h+1}.
\end{equation}

\subsubsection{On the extendability to singular points of metrics of type \ref{case2c}}\label{sec:2c}
In this section we treat the case in which the regular locus of $(M,g)$ contains points of type \ref{case2c}. 
%
%
%
Below, $\gast(t)=(x(t),y(t))$ denotes the image, in a
standard model $(\mathcal M,\gst)$  of type \ref{case2c}, of a modelled geodesic of
$(M,g)$  issuing from a singular point.

%
%

\smallskip\noindent
By Lemma \ref{lem:reduction.x}, as $t \to 0^+$ one has either $x(t) \to \pm\infty$ or $x(t) \to \pi_x(\partial \mathcal M)$. 
The first possibility is immediately excluded, since, in view of Table \ref{table2}, the scalar curvature $R$ diverges as $x \to \pm\infty$. We now analyse the possible behaviour near the boundary of $\mathcal M$.  
We recall that
$$
\mathcal M = \{ (x,y) \in \mathbb{R}^2 \mid x \neq 0,\; 2x^2 + cx + \varepsilon_2 \neq 0 \}.
$$
 Therefore, $\partial \mathcal M$ consists in the union of the sets $\{x=0\}$ and $\{2x^2 + cx + \varepsilon_2 = 0\}$.
We claim that the latter possibility is excluded.  Let $\tilde x$ be such that $2\tilde x^{\,2} + c\tilde x + \varepsilon_2 = 0$. By considering \eqref{syst.principal}, one obtains that, along $\gast$,
\begin{equation}\label{eq:dot.x.2c}
\dot{x}^2 = (2x^2 + cx + \varepsilon_2)^2\;\frac{aC_2 \,x - C_1^2\varepsilon_1(2x^2 + cx + \varepsilon_2)}{a^2}.
\end{equation}
Hence, if $ x \to \tilde x$ as $t \to 0^+$, then the affine parameter of $\gast$ is unbounded
and such geodesics cannot be suspicious. This is a contradiction.

\smallskip
To sum up, the only admissible behaviour is 
$x(t) \to 0$ as $t \to 0^+$.

\subsection{Regular and singular locus of $(M,g)$ and completion of the proof of Theorems \ref{th.main.3} and  \ref{th.main.2}}\label{sec.regular.singular}

In this section we refine the information obtained in Section \ref{sec.extendability} by studying separately the regular and singular loci in the admissible cases.
We underline that throughout the section we complete the proofs of Theorems \ref{th.main.3} and  \ref{th.main.2}, more precisely at the end of Section \ref{sec:regular.locus.2c}.

\smallskip
As usual $(M,g)$ is a 2-dimensional pseudo-Riemannian manifold with  $\dim\mathfrak{p}(g)\geq 2$.

\subsubsection{The regular and singular locus of $(M,g)$ containing points of type \ref{case1a}}
\label{sec:regular.locus.1a}
%
%
We now analyse the possible occurrence of regular points not of type \ref{case1a} in a manifold $(M,g)$ containing points of type \ref{case1a}. Then such points must be locally isometric to a standard model whose projective algebra contains both a Killing vector field and a non-Killing homothety since, as observed in Section \ref{sec:1a}, $\mathfrak p(g)$ is generated by a Killing vector field and a non-Killing homothety.
In view of Proposition \ref{impossible}, such a model cannot be of type \ref{case1c}, \ref{case2a} or \ref{case2b}. Furthermore, since metrics \ref{case1b} and \ref{case2c} do not admit homothetic vector fields, the only remaining possibilities are another standard model of type \ref{case1a}, possibly characterized by different parameters, or a flat metric.

Assume by contradiction that $(M,g)$ admits either regular points admitting a flat neighbourhood or regular points of type \ref{case1a} with to different values of the parameter $b$. By Lemma \ref{lem:regular_boundary} (see also Remark \ref{lem:several.reg}), we may choose a geodesic $\sigma:[0,1]\to M$ passing through a singular point  $s$ such that $\sigma(0)$ is a regular point of type \ref{case1a}, while $\sigma(1)$ either admits a flat neighbourhood or is a regular point of type \ref{case1a} with a different value of $b$.
By Section \ref{sec:1a}, the Killing vector field $K$ is nowhere vanishing along $\sigma$.
Let
\begin{equation}\label{eq:Ireg}
I_{\mathrm{reg}}:=\{t\in[0,1]\mid \sigma(t)\ \text{is a regular point}\}.
\end{equation}
Then $I_{\mathrm{reg}}$ is an open subset of $[0,1]$. Moreover, its complement $[0,1]\setminus I_{\mathrm{reg}}$ cannot contain any nontrivial interval. Indeed, if $\sigma((a,b))$ consisted only of singular points, then either $K$ would be transverse to $\sigma$ on $(a,b)$, in which case its flow would generate an open subset of $M$ consisting of singular points, or $K$ would be tangent to $\sigma$ on $(a,b)$, so that $\sigma((a,b))$ would be contained in an orbit of $K$, and hence the whole $\sigma$ would consist only of singular points. Therefore, both cases are impossible and  $I_{\mathrm{reg}}$ is dense in $[0,1]$.

For each connected component $I$ of $I_{\mathrm{reg}}$ consisting of points of type \ref{case1a}, let $h_I\in\N\setminus\{1\}$ be the integer determining the corresponding parameter $b$, as in \eqref{eq:b.h}.
Since $\sigma(0)$ is a regular point of type \ref{case1a}, the set
$$
\mathcal H
=
\left\{
h_I\mid
I \text{ is a connected component of } I_{\mathrm{reg}}
\text{ consisting of points of type \ref{case1a}}
\right\}$$
is nonempty. Let
$h_0=\min\mathcal H$ and let $I_0$ be a connected component of $I_{\mathrm{reg}}$ of type
\ref{case1a} whose associated integer is $h_0$.
Up to reparametrizing, one may assume that $\sigma|_{I_0}$ is a modelled geodesic arc.  
Being  the Killing vector field $K$ nowhere vanishing along $\sigma$, the vector field $Z$, introduced in Section \ref{sec:1a}, is defined on a sufficiently small neighbourhood of the whole geodesic $\sigma$.
Moreover, $Z$ can be chosen in such a way that, along $\sigma|_{I_0}$, it reads as in \eqref{eq:Z.choice}. 
Taking into account \eqref{eq:b.h}, we obtain, along $\sigma|_{I_0}$,
\begin{equation}\label{eq:Zm.R1a}
Z^{h_0-1}(R)\equiv \varepsilon_1 b \prod_{k=1}^{h_0-1}(k b + k + 1) \neq 0, \quad Z^m(R)\equiv0 \quad \text{for all } m>h_0-1\,.
\end{equation}
We first observe that $I_{\mathrm{reg}}$ does not contain infinitely many connected components consisting of points \ref{case1a} with the same parameter $b$. Indeed, assume by contradiction the existence of  infinitely many arcs $\gamma$ of $\sigma$ whose endpoints are singular  and whose interior points are regular of  type \ref{case1a} with the same parameter $b$. By Remark \ref{lem:two.singular}, these arcs are modelled. Let $\gast(t)=(x(t),y(t))$ be the image of such an arc in the corresponding standard model. In view of \eqref{eq:cond.1a}, one has $x(t)\to +\infty$ as $t$ tends to both endpoints.

\smallskip
We now distinguish two cases:

\begin{description}
\item[$\gast$ has zero length.]
Then $C_2=0$. Since $C_1\neq0$, \eqref{syst.principal} implies that $\dot x(t)$ never vanishes. Hence $x(t)$ is strictly monotone along $\gast$, which is incompatible with $x(t)\to+\infty$ at both endpoints.

\item[$\gast$ has non-zero length.]
Since $x(t)\to+\infty$ at both endpoints, the function $x(t)$ attains a minimum $x_{\min}$ at some interior point. Let $\phi_I$ be the isometry identifying the arc with the corresponding standard model
and write
$(\phi_I)_*K=\lambda_I\partial_y$ for a suitable constant $\lambda_I\neq0$.
Since $C_1=g(\dot\sigma,K)$ and
$C_2=g(\dot\sigma,\dot\sigma)$ are constant along $\sigma$, at the
minimum point one has, by \eqref{syst.principal},\ 
$
C_2\,\gst_{22}(x_{\min})
=
\frac{C_1^2}{\lambda_I^2}$.
Using this relation in the length integral $\ell$ and the explicit expression
of the scalar curvature $R$, one obtains
$$
\sqrt{|R(x_{\min})|}\,\ell=
2\sqrt{|b|}
\int_0^{+\infty}
\frac{e^{(b+1)s}}{\sqrt{1-e^{bs}}}\,ds
>0.
$$
Notice that the last integral depends only on $b$. Since $R\circ\sigma$ is bounded on $[0,1]$, the lengths of all such arcs are bounded from below by a positive constant independent of the arc. Thus, $\sigma$ cannot
contain infinitely many such arcs.
\end{description}
Taking into account the two cases above, we conclude that there are only finitely many arcs of $\sigma$ whose regular points are of type \ref{case1a} with the same parameter $b$.

\smallskip
Let now consider the continuous function
$$
f:[0,1]\longrightarrow\mathbb R,
\qquad
f(t)=Z^{h_0-1}(R)\,|_{\sigma(t)}.$$
If $I$ is a connected component of $I_{\mathrm{reg}}$ consisting of points of type \ref{case1a} such that its associated integer $h_I>h_0$, then, by \eqref{july}, $f(t)=0$ at every singular endpoint $\sigma|_I$.
The same conclusion holds  for  connected component of $I_{\mathrm{reg}}$ consisting of points admitting a flat neighbourhood.

Let $A$ denote the union of all connected components  of $I_{\mathrm{reg}}$ which are of type \ref{case1a} and satisfy $h_I=h_0$, and set
$B=I_{\mathrm{reg}}\setminus A$.
By the definition of $h_0$, the set $A$ is nonempty. Moreover, in view of our assumptions on $\sigma(1)$, $B$ is nonempty as well. Since $I_{\mathrm{reg}}=A\cup B$ is dense in $[0,1]$, we have
$[0,1]=\overline{A}\cup\overline{B}$.
We may therefore choose
$t_*\in\overline A\cap\overline B$.
Necessarily $t_*\notin I_{\mathrm{reg}}$.
Choose a sequence $t_n\in A$ converging to $t_*$.
Since $A$ has only finitely many connected components, after passing to a subsequence we
may assume that all $t_n$ belong to the same component.
As $f|_I$ is constant and non-zero, continuity gives
$
f(t_*)\neq0$.
 On the other hand, choose $u_n\in B$ with $u_n\to t_*$, and denote by $J_n$ the connected component of $I_{\mathrm{reg}}$ containing $u_n$. Since $t_*\notin J_n$, the closed interval with endpoints $u_n$ and $t_*$ contains a singular endpoint $s_n$ of $J_n$. Choosing $s_n$ to be the endpoint of $J_n$ lying between $u_n$ and $t_*$, we have
$
|s_n-t_*|\leq |u_n-t_*|$,
and hence $s_n\to t_*$.
Each $J_n$ is either a flat component or a component of type \ref{case1a} with $h_{J_n}>h_0$. In either case, the preceding discussion gives
$
f(s_n)=0$.
Therefore, we obtain
$
f(t_*)=0$, a contradiction.

\smallskip
It follows that, \emph{if $(M,g)$ possesses a regular point of type \ref{case1a}, all the other regular points are of type \ref{case1a} with the same value of the parameter $b$}. 
Moreover, the previous argument shows that singular points are isolated along any geodesic transverse to $K$. Since the flow of $K$ preserves the singular locus and $K$ does not vanish there, \emph{the singular points of $M$ are organized into pairwise disjoint isolated curves which are integral curves of the Killing vector field $K$}.

\subsubsection{The regular and singular locus of $(M,g)$ containing points of type \ref{case1b}}
\label{sec:regular.locus.1b}

%
We determine the possible types of regular points that may occur on a manifold $(M,g)$ admitting regular points of type \ref{case1b}. We follow the same strategy as in Section \ref{sec:regular.locus.1a} and indicate only the necessary modifications.

We first observe that, in view of Proposition \ref{impossible}, regular points admitting a neighbourhood isometric to a standard model of type \ref{case1c}, \ref{case2a} or \ref{case2b} are excluded. Moreover, by the analysis in Section \ref{sec:regular.locus.1a}, regular points of type \ref{case1a} cannot occur. 
We claim that regular points of type \ref{case2c} cannot occur either. Let $U\subset M$ be an open set consisting of regular points of type \ref{case1b}. Since
$
\dim\mathfrak p(g|_U)=2
$,
 we have $\dim\mathfrak p(g)=\dim\mathfrak p(g|_U)=2$.
Thus, there exist $K,X\in\mathfrak p(g)$ whose restrictions to $U$ correspond to $\partial_y$ and $\partial_x+y\partial_y$, respectively. By Proposition \ref{homo} and Corollary \ref{cor.proj.field.vanishing}, $K$ is a Killing vector field on $M$ and
$[K,X]=K$.
Suppose that $V\subset M$ consists of regular points of type \ref{case2c}. 
Since the Killing vector field of the corresponding standard model is
unique up to a non-zero constant factor, the restriction of $K$ to $V$
corresponds to $\lambda\partial_y$ for some $\lambda\neq0$. Hence,
$
[\partial_y,Y]=\partial_y$
for some $Y\in\mathfrak p(g|_V)$.
By considering the bases of $\mathfrak p(g|_V)$ given in Remark \ref{rem.proj.v.f.standard}, one immediately sees that this is impossible. Thus, regular points of type \ref{case2c} cannot occur in $M$.
The only remaining possibilities are points admitting a neighbourhood of constant curvature and points locally isometric to a standard model of type \ref{case1b}, possibly with different parameters.

As in Section \ref{sec:regular.locus.1a}, we first establish a finiteness property that will be used below. Let
$\sigma:[0,1]\to M$ be a geodesic. For every fixed admissible value of $b$, the set $I_{\mathrm{reg}}$, see \eqref{eq:Ireg}, contains only finitely many connected components $I$ such that $\sigma(I)$ consists of points of type \ref{case1b} with the same parameter $b$.
Assume otherwise that there are infinitely many pairwise disjoint modelled arcs $\sigma|_I$ with both endpoints singular. After passing to a subsequence, we may assume that the parameters $\varepsilon_1,\varepsilon_2$ are fixed.
 Let
$\gast_I(t)=(x_I(t),y_I(t))$ be the image of such an arc in the corresponding standard model. Then $x_I(t)\to+\infty$ at both endpoints.
Set $C_1=g(\dot\sigma,K)$ and $C_2=g(\dot\sigma,\dot\sigma)$.
The case $C_2=0$ is excluded as in Section \ref{sec:regular.locus.1a}, since then $x_I$ is strictly
monotone. Thus $C_2\neq0$ and $x_I$ attains a minimum at an interior point of $I$. Arguing as in Section \ref{sec:regular.locus.1a}, using the first integrals together with the explicit expressions of the metric and
the scalar curvature, one finds that the lengths of the arcs $\sigma_I$ are bounded from below by a positive constant independent of $I$.
This contradicts the existence of infinitely many pairwise disjoint such arcs in $\sigma([0,1])$ with the same parameter $b$ and proves the claim.

We now exclude points admitting a neighbourhood of constant curvature and, at the same time, show that all regular points of type \ref{case1b} have the same value of the parameter $b$. Assume otherwise and choose a geodesic $\sigma$ as in Section \ref{sec:regular.locus.1a}, namely containing regular points of type \ref{case1b} with a different parameter $b$ or with  a neighbourhood of constant curvature.
For each connected component $I$ of $I_{\mathrm{reg}}$ of type \ref{case1b}, let $h_I$ be the integer determining the corresponding admissible value of $b$, as in \eqref{eq:b.1b.I} or
\eqref{eq:b.1b.II}.

Assume first that components with parameter $b$ reading as \eqref{eq:b.1b.I} occur, and let $h_0$ be the minimum of their corresponding integers $h_I$. Let $A$ be the union of the components whose parameter $b$, reading as \eqref{eq:b.1b.I}, such that $h_I=h_0$. Set $B=I_{\mathrm{reg}}\setminus A$. 
If $B\neq\emptyset$, choose
$t_*\in\overline A\cap\overline B$
as in Section \ref{sec:regular.locus.1a}. 
Since $A$ has finitely many connected components, $t_*$ is a singular
endpoint of one of them. Hence $g(K,K)_{\sigma(t_*)}\neq0$. Continuity implies that
$g(K,K)$ is non-zero along $\sigma$ in a neighbourhood of $t_*$.
On the other hand, $g(K,K)$ vanishes at every singular endpoint of a
component corresponding to \eqref{eq:b.1b.II}. Hence no such endpoints
can accumulate at $t_*$. Using the vector field $Z$ defined by
\eqref{eq:Z.1b.I.global} and setting
$
f(t)=Z^{h_0}(R)|_{\sigma(t)}
$,
equation \eqref{eq:ZhR.1b.I} gives $f(t_*)\neq0$. 
On the other hand,
arguing from the $B$-side as in Section \ref{sec:regular.locus.1a}, one obtains singular endpoints
$s_n\to t_*$. For $n$ sufficiently large, the corresponding component cannot satisfy \eqref{eq:b.1b.II}; hence it either has constant curvature or satisfies \eqref{eq:b.1b.I} with $h_I>h_0$.
In both cases $f(s_n)=0$, and therefore $f(t_*)=0$, a contradiction. Thus $B=\emptyset$.

If no component corresponding to \eqref{eq:b.1b.I} occurs, all
components of type \ref{case1b} satisfy \eqref{eq:b.1b.II}. Using
\eqref{eq:Z.1b.II.global} and \eqref{eq:ZhR.1b.II}, the same argument
shows that they all have the same value of the parameter $b$, and
that no constant curvature component occurs.
Therefore, all regular points of $(M,g)$ are of type \ref{case1b}
with the same value of the parameter $b$.

\smallskip
In conclusion, we proved that \emph{all regular points of $(M,g)$ are of type \ref{case1b} with the same value of the parameter $b$}.
Moreover, the previous argument shows that singular points are isolated along any geodesic transverse to $K$. Since the flow of $K$ preserves the singular locus and $K$ does not vanish there, \emph{the singular points of $M$ are organized into pairwise disjoint isolated curves which are integral curves of the Killing vector field $K$}.

\subsubsection{The regular and singular locus of $(M,g)$ containing points of type \ref{case2c}}
\label{sec:regular.locus.2c}\label{sec:singular.locus.2c}

Let  $(M,g)$ be a manifold admitting regular points of type \ref{case2c}.
By Proposition \ref{impossible}, regular points admitting a neighbourhood isometric to a standard model of type \ref{case1c}, \ref{case2a} or \ref{case2b} are excluded. Moreover, by Sections \ref{sec:regular.locus.1a} and \ref{sec:regular.locus.1b}, regular points of type \ref{case1a} and \ref{case1b} cannot occur. Thus, the only remaining possibilities are regular points of type \ref{case2c} (possibly with different parameters) and points admitting a neighbourhood of constant curvature.

\smallskip
Let $\gamma$ be a modelled geodesic arc issuing from a singular point and let
$\gast(t)=(x(t),y(t))$ be its image in the corresponding standard model of type \ref{case2c}. 

\medskip
\noindent\textbf{Case 1: the metric $g$ is Riemannian or negative definite.}

By the analysis of Section \ref{sec:2c}, one has
$x(t)\to0$ a s $t\to0^+$.
Hence the condition that $g$ be Riemannian or negative definite is
equivalent to $\varepsilon_1\varepsilon_2=1$.

We first show that every singular endpoint of a modelled geodesic is isolated in the singular locus.  Let $s=\gamma(0)$ be a singular point. In view of Table \ref{table2}, $I(s)=0$.
Since $g$ is definite, this implies $(dR)_s=0$. Moreover, a direct computation gives
\begin{equation}\label{eq:morse}
\det(g^{-1}\nabla dR)_s=\frac{9}{a^4}\neq0.
\end{equation}
Thus $s$ is a Morse critical point of $R$. After shrinking a neighbourhood $U(s)$, we may therefore assume that
\begin{equation}\label{eq:drneq0}
dR\neq0\qquad\text{on }U(s)\setminus\{s\}.
\end{equation}
In particular, no regular point of $U(s)\setminus\{s\}$ can admit a neighbourhood of constant curvature. Hence, by the preceding exclusions, every regular point of $U(s)\setminus\{s\}$ is of type \ref{case2c}.

Assume by contradiction that there exists another singular point $s'\in U(s)$. Since the regular locus is dense, we may choose a regular point $r\in U(s)$ such that the geodesic joining $s'$ to $r$ does not pass through $s$. By Proposition \ref{prop:admissible.arc.2}, this geodesic contains a modelled arc $\tilde\gamma$. By the preceding observation, $\tilde\gamma$ is of type \ref{case2c}. Its singular endpoint $\tilde s$ is distinct from $s$ (possibly distinct to $s'$ too), whereas, by the same argument as
above, $(dR)_{\tilde s}=0$. This contradicts \eqref{eq:drneq0}. Thus every singular endpoint of a modelled geodesic is isolated.


\smallskip
In conclusion, \emph{the singular locus of $(M,g)$ consists only of isolated points and its regular locus consists only  points of type \ref{case2c}}.

\medskip\noindent\textbf{Case 2:  the metric $g$ is Lorentzian.}
Since $x(t) \to 0$ as $t\to 0^+$, requiring the metric $g$ to be Lorentzian is equivalent to $\varepsilon_1\varepsilon_2=-1$.
By considering Table \ref{table2}, at the singular endpoint $s'$ of $\gamma$ one has
$
I|_{s'}=0$.
Therefore, either $(dR)_{s'}\neq 0$ and $(dR)_{s'}$ is isotropic, or $s'$ is a Morse singularity of the scalar curvature $R$ as follows from \eqref{eq:morse}. We study these two cases separately.

\smallskip\noindent
{\textbf{Case 2.1: $s'=\gamma(0)$ is a singular point such that $(dR)_{s'}\neq 0$.}}
By Lemma \ref{prop.principal} contained in the Appendix, there exists a neighbourhood $U$ of $s'$ admitting a nowhere vanishing Killing vector field $K$. Shrinking $U$ if necessary, we may assume that $dR\neq0$
throughout $U$. In particular, no regular point of $U$ admits a neighbourhood of constant curvature, and hence \emph{every regular point of $U$ is of type \ref{case2c}}.

Let $\sigma$ be a geodesic of $M$ containing the modelled arc $\gamma$. We claim that $K$ is transverse to $\sigma$. Indeed, since
$
g(K(s'),K(s'))$ is proportional to $=\lim_{x\to 0} \gst_{22}(x)=0$,
the vector $K(s')$ is isotropic. If $K(s')$ were tangent to $\sigma$, then
$g(\dot\sigma,K)=0$ at $s'$, and hence everywhere along $\sigma|_U$ since $\sigma$ is a geodesic.  As $K$ is nowhere vanishing, this implies that $\dot\sigma$ is collinear with $K$ along $\sigma|_U$. Therefore, up to reparametrization, $\sigma|_U$ coincides with an integral curve of $K$. Since the flow of $K$ consists of local isometries, it preserves the singular locus. Hence, $\sigma|_U$ consists only of singular points. This contradicts the construction of the modelled arc $\gamma$. Hence $K$ is transverse to $\sigma|_U$, up to shrinking $U$.
It follows that $\sigma|_U$ cannot contain a nontrivial interval of singular points: otherwise, its images under the local flow of $K$ would generate an open set of singular points, contradicting the density of the regular locus. Consequently, the regular points of $\sigma|_U$ form a dense union of open intervals. Since every regular
point of $U$ is of type \ref{case2c}, the corresponding geodesic arcs with singular endpoints are modelled.

Since $K$ is Killing, one has $dR(K)=0$. As $dR$ is nowhere vanishing on $U$ and $\dim M=2$, it follows that
$\ker dR=\operatorname{span}(K)$.
Since $K$ is transverse to $\sigma|_U$, we therefore have
$dR(\dot\sigma)\neq0$
along $\sigma|_U$. Hence $R\circ\sigma$ is strictly monotone.
We claim that $\sigma|_U$ cannot contain a modelled geodesic arc with
two singular endpoints. Indeed, let $\gamma$ be such an arc and let
$
\gast(t)=(x(t),y(t))
$
be its image in a standard model of type \ref{case2c}, with parameters
$(a,c,\varepsilon,-\varepsilon)$. At both singular endpoints one has
$x(t)\to0$. 
Thus $R$ has the same value at the two endpoints of $\gamma$,
contradicting the strict monotonicity of $R\circ\sigma$.
It follows that $\sigma|_U$ contains at most one singular point. 

\smallskip
To sum up, since the flow of the Killing vector field $K$ preserves the singular locus, it follows that, \emph{away from Morse singularities of $R$, the singular locus of $(M,g)$ is the union of isolated lightlike curves: they are integral curves of a Killing vector field}.

\smallskip
It remains to compare the standard models occurring on the two sides
of the singular curve passing through $s'$. Let their parameters be
$
(a,c,\varepsilon,-\varepsilon)$ and 
$(\bar a,\bar c,\bar\varepsilon,-\bar\varepsilon)$ respectively.
By Table \ref{table2},
$$
R\to-\frac{\varepsilon c}{2a},
\qquad
\Delta R\to -\frac{6\varepsilon}{a^2}
$$
at $s'$, and analogously for the barred parameters. Since $R$ and
$\Delta R$ are smooth on $M$, the corresponding limits coincide.
As $a,\bar a>0$ and $\varepsilon,\bar\varepsilon\in\{-1,1\}$, we
obtain
$\bar\varepsilon=\varepsilon$,
$\bar a=a$ and 
$\bar c=c$.
Thus the \emph{regular regions adjacent to $s'$ are described by the same standard model \ref{case2c}}.

\medskip\noindent
{\textbf{Case 2.2: $s'=\gamma(0)$ is a Morse singularity of the scalar curvature $R$.}} 
By Lemma \ref{prop.killing.morse} contained in the Appendix, there exists a neighbourhood $U$ of $s'$ admitting a Killing vector field $K$ which vanishes only at $s'$. Shrinking $U$ if necessary, we may also assume
that
$dR\neq0$ on $U\setminus\{s'\}$.

Let $\sigma_1,\sigma_2$ be the two lightlike geodesics issuing from $s'$. Along each of them,  $g(\dot\sigma_i,K)$ is constant. Since $K(s')=0$, it follows that, along $\sigma_i$,
$
g(\dot\sigma_i,K)=0\,.
$
As $\dot\sigma_i$ is lightlike and $K$ does not vanish on $U\setminus\{s'\}$, this implies that $K$ is collinear with $\dot\sigma_i$ along $\sigma_i$. In particular, $K$ is isotropic along these curves.

Since every regular point of $U\setminus\{s'\}$ is of type \ref{case2c}  and at such points the Killing vector field is
non-isotropic, $\sigma_1$ and $\sigma_2$ consist entirely of singular points.

Since the flow of $K$ preserves the singular locus and $K$ does not vanish on $U\setminus\{s'\}$, every singular point in $U\setminus\{s'\}$ lies on an integral curve of $K$. Moreover, along the singular locus the vector
field $K$ is isotropic, hence these curves are lightlike. In dimension
two, there are only two distinct isotropic directions. Therefore, at most two such curves can pass through $s'$.
Furthermore, if infinitely many singular curves were present in $U$, one could choose a geodesic transverse to $K$ intersecting infinitely many singular points. Since $dR \neq 0$ on $U \setminus \{s'\}$, this contradicts the fact that singular points are isolated along such geodesics, see Case 2.1. 

Hence, \emph{the singular locus  near a Morse singularity of $R$ consists exactly of two lightlike curves intersecting precisely at such singularity}.

\smallskip
We also observe that the regular locus in $U$ consists only of points of
type \ref{case2c} with the same parameters. The argument is identical to the one given in Case 2.1, and therefore we omit the details.

\smallskip
We can now record that Theorem \ref{th.main.3} has been proved. Indeed, in the presence of singular points, Proposition \ref{impossible} excludes the types \ref{case1c}, \ref{case2a}, and \ref{case2b}. The cases
\ref{case1a} and \ref{case1b} were treated respectively in Sections
\ref{sec:regular.locus.1a} and \ref{sec:regular.locus.1b}, while the remaining case \ref{case2c} has
been handled in the present section. In each case, the existence of regular
points of one type forces all regular points to be of that same type.

We can also record another consequence of the preceding analysis: Theorem \ref{th.main.2} is proved. Around regular points, the existence of a nontrivial Killing vector field follows from the local models of Theorem
\ref{th.Bryant.Manno.Matveev} and Remark \ref{rem.proj.v.f.standard}. Around singular points, Proposition
\ref{impossible} excludes the types \ref{case1c}, \ref{case2a} and \ref{case2b}; the cases \ref{case1a} and \ref{case1b} were treated respectively in Sections \ref{sec:regular.locus.1a} and \ref{sec:regular.locus.1b}, while the remaining
case \ref{case2c} has just been handled above. Hence, $(M,g)$ admits a nontrivial Killing vector field in a neighbourhood of every point of $M$.

\subsection{Completion of  the proof of Theorem \ref{th.main}}\label{sec:final.th.main}

In the present section 
we construct some local coordinates reflecting the geometric properties and restrictions obtained in the previous sections, thus completing the proof of Theorem \ref{th.main}.

\smallskip
In view of Theorem \ref{th.main.3} (proved at the end of Section \ref{sec:regular.locus.2c}), if $(M,g)$ is a $2$-dimensional pseudo-Riemannian manifold with $\dim\mathfrak{p}(g)\geq 2$, regular points of $M$ are all of the same type. In view of this, we denote  by $M^{\ref{case1a}}$, $M^{\ref{case1b}}$ and $M^{\ref{case2c}}$ manifolds whose regular points are, respectively, of type $\ref{case1a},\ref{case1b}$ and $\ref{case2c}$.

\subsubsection{Local forms of $(M^{1a},g)$  near singular points}\label{sec:local.1a}

%
Let $s\in M$ be a singular point and let $\mathcal S$ be the orbit of the Killing vector field $K$ through $s$. 
If $U$ is a sufficiently small neighbourhood of $s$, then $\mathcal{S}$ separates $U$ into two connected components, denoted by $U_+$ and $U_-$, consisting only of regular points.
In view of Sections \ref{sec:1a} and \ref{sec:regular.locus.1a}, each of these components is locally isometric to an open subset of a standard model of type \ref{case1a}, corresponding respectively to the parameter triples
$$
(b,\varepsilon,-\varepsilon)
\qquad \text{and} \qquad
(b,\bar\varepsilon,-\bar\varepsilon).
$$
Fix coordinates $(x,y)$ on $U_-$ and $(\bar x,\bar y)$ on $U_+$ induced by these identifications.
On the whole neighbourhood $U$, a vector field $Z$ satisfying \eqref{eq:ZZ0.ZK1} is well-defined. On each connected component of $U\setminus\mathcal{S}$, conditions \eqref{eq:ZZ0.ZK1} determine $Z$ up to the sign of its component orthogonal to $K$.
Without loss of generality, we may assume that on $U_-$ one has
\begin{equation*}
Z=e^{-(b+1)x}\partial_x-\varepsilon e^{-bx}\partial_y.
\end{equation*}
On the other component $U_+$, as said, there are a priori two possibilities:
\begin{equation}\label{eq:Z.choice.bar}
Z=e^{-(b+1)\bar x}\partial_{\bar x}-\bar\varepsilon e^{-b\bar x}\partial_{\bar y}
\quad\text{or}\quad
Z=-e^{-(b+1)\bar x}\partial_{\bar x}-\bar\varepsilon e^{-b\bar x}\partial_{\bar y}.
\end{equation}

By a direct computation on $U\setminus\mathcal{S}$ and by continuity, one has $[K,Z]=0$ on $U$. Moreover, since $K$ and $Z$ are linearly independent, they define a local coordinate system $(u,v)$ such that
$$
Z=\partial_u\,, \quad K=\partial_v\,.
$$
As $\mathcal{S}$ is an integral curve of $K$, the coordinates can be chosen so that
$$
\mathcal{S}=\{u=0\},\qquad
U_+=\{u>0\}, \qquad U_-=\{u<0\}.
$$

We first determine the change of coordinates $(x,y)\mapsto (u,v)$ on the connected component $U_-$:
$$
\begin{cases}
K(u)=0,\\
Z(u)=1,\\
K(v)=1,\\
Z(v)=0,
\end{cases}
\qquad\Longleftrightarrow\qquad
\begin{cases}
u_y=0,\\
e^{-(b+1)x}u_x-\varepsilon e^{-bx}u_y=1,\\
v_y=1,\\
e^{-(b+1)x}v_x-\varepsilon e^{-bx}v_y=0.
\end{cases}
$$
Solving this system, we obtain
$$
u=\frac{e^{(b+1)x}}{b+1}+c, \qquad v=y+\varepsilon e^x+d,
$$
for suitable constants $c,d\in\mathbb{R}$.
Since $b+1<0$, $e^{(b+1)x}\to 0$ as $x\to+\infty$. Since the singular points correspond to the limit $x\to+\infty$ and lie on $\mathcal{S}=\{u=0\}$, it follows that $c=0$. Thus, on $U_-$,
\begin{equation*}
u=\frac{e^{(b+1)x}}{b+1}, \qquad v=y+\varepsilon e^x+d.
\end{equation*}

If the first expression in \eqref{eq:Z.choice.bar} is chosen, then the same computations as above yield $u(\bar x,\bar y) < 0$, contradicting the assumption $U_+ = \{u>0\}$. Hence, this possibility must be excluded, and only the second option in \eqref{eq:Z.choice.bar} can occur. In this case,  similar computations instead lead, as desired, to a function $u$ which is positive:
\begin{equation*}
u=-\frac{e^{(b+1)\bar x}}{b+1}, \quad v=\bar y-\bar \varepsilon e^{\bar x}+D, \quad D\in \mathbb{R}\,.
\end{equation*}
At this point one can derive a necessary condition on the parameters $\varepsilon$ and $\bar\varepsilon$ by comparing the values of $Z^{h-1}(R)$ on $U_-$ and $U_+$. Indeed, on $U_-$, formula \eqref{eq:Zm.R1a} gives
$$
Z^{h-1}(R)=\varepsilon\, b \prod_{k=1}^{h-1}(kb+k+1).
$$
On the other hand, on $U_+$, considering the second expression in \eqref{eq:Z.choice.bar}, similar computations yield
$$
Z^{h-1}(R)=(-1)^{h-1}\bar\varepsilon\, b \prod_{k=1}^{h-1}(kb+k+1).
$$
Since $Z^{h-1}(R)$ is a smooth function on $U$ and both expressions are constant on $U_\pm$, their values must coincide on $\mathcal S$. Therefore, a necessary condition for gluing $U_+$ and $U_-$ along $\mathcal S$ is 
\begin{equation}\label{eq:gluing.epsilon}
\varepsilon = (-1)^{h-1}\bar\varepsilon.
\end{equation}
Equivalently, $\varepsilon=\bar\varepsilon$  if $h$ is odd and $\varepsilon=-\bar\varepsilon$ if $h$ is even.
We finally express the metric $g$ in the coordinates $(u,v)$ and show that the two local expressions glue smoothly across $\mathcal S$.
Indeed,  straightforward computations yield
\begin{equation*}
g|_{U_-}=2\,du\,dv-\varepsilon\left(-\frac{u}{h}\right)^{h+1}dv^2,\quad\quad g|_{U_+}=2\,du\,dv-\bar\varepsilon\left(\frac{u}{h}\right)^{h+1}dv^2.
\end{equation*}
Hence, when \eqref{eq:gluing.epsilon} holds, the two expressions coincide, and the metric extends smoothly across $\mathcal S$. Up to rescaling of the coordinates $(u,v)$, this yields the normal form
$$
g|_U = 2\,du\,dv + (-\varepsilon)^h \left(\frac{u}{h}\right)^{h+1} dv^2.
$$
In a suitable system of null coordinates, the above metrics assume the form of metrics \ref{case1a.main} of Theorem \ref{th.main}, provided the renaming $\varepsilon\leftrightarrow -\varepsilon$.

\subsubsection{Local forms of $(M^{\ref{case1b}},g)$ near singular points}\label{sec:local.1b}

%

To describe the local form of the metric near a singular point, we follow the same strategy as in Section \ref{sec:local.1a}.

\smallskip\noindent
Let $s \in M$ be a singular point and $\mathcal S$  be the orbit of the Killing vector field $K$ through $s$. For a sufficiently small neighbourhood $U$ of $s$, the set $\mathcal S$  separates $U$ into two connected components $U_+$ and $U_-$, consisting only of regular points.
By the results of Sections \ref{sec:1b} and \ref{sec:regular.locus.1b}, each of these components is locally isometric to an open subset of a standard model of type $1b$ with the same parameter $b$. We fix the corresponding coordinate systems $(x,y)$ on $U_-$ and $(\bar{x},\bar{y})$ on $U_+$ so that $K = \partial_y = \partial_{\bar{y}}$.

We now consider the vector field $Z$ introduced in Section \ref{sec:1b}. Depending on the sign of the parameter $b$, this vector field is defined by \eqref{eq:Z.1b.I.global} or \eqref{eq:Z.1b.II.global}. In both cases one checks that $[K,Z]=0$. Therefore, the pair $(K,Z)$ defines a system of local coordinates $(u,v)$ on $U$, characterized by
$$
Z = \partial_u, \quad K = \partial_v, 
$$
and such that
$$
\mathcal S = \{u = 0\}, \qquad U_\pm = \{\pm u > 0\}.
$$
In these coordinates, the problem reduces to comparing the expressions of the metric on $U_+$ and $U_-$, and to determining the compatibility conditions under which they glue smoothly across $S$.

\smallskip
We now treat separately the cases corresponding to the two possible signs of the parameter $b$, cf. \eqref{cond:1b-case1} and \eqref{cond:1b-case2}.

\bigskip\noindent\textbf{Case 1: the parameter $b=2+\tfrac{4}{h}$.}
In this case, we use the vector field $Z$ introduced in Section \ref{sec:1b}, whose expression in the coordinates $(x,y)$ is given by \eqref{eq:Z.1b.I}.
Following  the scheme of Section \ref{sec:local.1a},  we first determine the change of coordinates $(x,y)\mapsto (u,v)$  on $U_-$:
\begin{equation}\label{eq:u.v.minus.1b}
u=-\sqrt{|a|}\int_x^{+\infty}
\frac{e^{(2+\frac{2}{h})\xi}}{e^{(2+\frac{4}{h})\xi}+\varepsilon_2}\,d\xi,
\quad
v=y+d, \,\,\,d\in\mathbb{R}
\end{equation}
and the change of coordinates $(\bar x,\bar y)\mapsto (u,v)$ on $U_+$:
\begin{equation}\label{eq:u.v.plus.1b}
u=\sqrt{|\bar a|}\int_{\bar x}^{+\infty}
\frac{e^{(2+\frac{2}{h})\xi}}{e^{(2+\frac{4}{h})\xi}+\bar\varepsilon_2}\,d\xi,
\quad
v=\bar y+D, \,\,\,D\in\mathbb{R}\,.
\end{equation}
The above change of coordinates has been attained by choosing $\delta$ in \eqref{eq:Z.1b.I} such that 
$u<0$ on $U_-$ and $u>0$ on $U_+$.

\medskip
The compatibility conditions on parameters of $U_\pm$
are obtained by comparing scalar invariants across $\mathcal S$. The continuity of $g(K,K)$ implies $a\varepsilon_1=\bar a\,\bar\varepsilon_1$,
while the comparison of $Z^h(R)$ along $\mathcal S$ (cf. \eqref{eq:ZhR.1b.I}) yields
\begin{equation}\label{eq:gluing.epsilon2}
\bar \varepsilon_2=(-1)^h\, \frac{a}{\bar a}\,\varepsilon_2.
\end{equation}
In order to make explicit the gluing of $g|_{U_+}$ and $g|_{U_-}$ along $\mathcal S$, it is convenient to perform a change of variable in the integrals defining the coordinate $u$. Namely, we set
$$
\tau=-e^{-\frac{2}{h}\xi}\quad\text{in }\eqref{eq:u.v.minus.1b}
\qquad\text{and} \qquad
\tau=e^{-\frac{2}{h}\xi}\quad\text{in }\eqref{eq:u.v.plus.1b},
$$
and we denote by $t$ the endpoint of integration, respectively
$$
t=-e^{-\frac{2}{h}x}\quad\text{on }U_-
\qquad\text{and} \qquad
 t=e^{-\frac{2}{h}\bar x}\quad\text{on }U_+.
$$
This yields
$$
u=-\frac{h\sqrt{|a|}}{2}\int_{t}^{0} \frac{d\tau}{1+\varepsilon_2(-\tau)^{h+2}}
\quad\text{when } t<0,
\qquad
u=-\frac{h\sqrt{|\bar a|}}{2} \int^0_{ t} \frac{d\tau}{1+\bar \varepsilon_2 \tau^{h+2}}
\quad\text{when } t>0.
$$
Thus, $u(t)$ extends smoothly across $t=0$ if and only if $\bar \varepsilon_2=(-1)^h\,\varepsilon_2,$
(and hence, considering \eqref{eq:gluing.epsilon2}, if and only if $a=\bar a $).
Moreover, since
$$
\frac{du}{dt}\bigg|_{t=0}=\frac{h\sqrt{|a|}}{2}\neq 0,
$$
the function $t\mapsto u$ is a local diffeomorphism, so that $(t,v)$ defines a local coordinate system on $U$.
In the coordinates $(t,v)$, the metric $g|_U$ takes the form
$$
g|_U=
\frac{ah^2}{4}\frac{dt^2}{(1+\bar\varepsilon_2 t^{h+2})^2}
+
\frac{a\varepsilon_1}{1+\bar\varepsilon_2 t^{h+2}}\,dv^2.
$$
After the rescaling
$$
\tilde t=\frac{h\sqrt{|a|}}{2}\,t,
\qquad
\tilde v=\sqrt{|a|}\,v,
$$
we obtain
$$
g|_U=
\eta_1\,
\frac{d\tilde t^{\,2}}{(1+\kappa \tilde t^{h+2})^2}
+
\eta_2\,
\frac{d\tilde v^2}{1+\kappa \tilde t^{h+2}}, \quad \eta_1=\operatorname{sign}(a),\,\,
\eta_2=\operatorname{sign}(a)\,\varepsilon_1,\,\,
\kappa=\varepsilon_2\left(\frac{2}{h\sqrt{|a|}}\right)^{h+2}.
$$
Hence, these metrics are parametrised by
$(\eta_1,\eta_2,\kappa)$, $\eta_1,\eta_2\in\{-1,1\}$, $\kappa>0$ if $h$ is odd (in this case metrics with opposite $\kappa$ are isometric) and $\kappa\in\R\setminus\{0\}$ if $h$ is even. Two such metrics are  isometric only if the corresponding triples coincide. 

Up to  renaming some parameters and scaling the coordinates, we arrive at normal forms \ref{case1b1.main}-\ref{case1b1bis.main} of Theorem \ref{th.main}.

\bigskip\noindent\textbf{Case 2: the parameter $b=-1-\tfrac{1}{h+1}$.}
In this case, we consider the vector field $Z$ introduced in Section~\ref{sec:1b}, whose expression in the coordinates $(x,y)$ is given by \eqref{eq:Z.1b.II}. Moreover, we recall that, by \eqref{cond:1b-case2}, the metric is Lorentzian, hence $\varepsilon_2 = -\varepsilon_1$ and $\bar\varepsilon_2 = -\bar\varepsilon_1$

Arguing as in the previous case, we  determine the change of coordinates $(x,y)\mapsto (u,v)$ on $U_-$:
\begin{equation}\label{eq:u.v.minus.1b.neg}
u=-|a|\int_{x}^{+\infty}
\frac{e^{-\frac{\xi}{h+1}}}{\left(1-\varepsilon_1 e^{-\frac{h+2}{h+1}\xi}\right)^{3/2}}\,d\xi,
\quad
v=y-\operatorname{sign}(a)\int_{x}^{+\infty}
\frac{e^{\xi}}{\sqrt{1-\varepsilon_1 e^{-\frac{h+2}{h+1}\xi}}}\,d\xi+d,\,\,\, d\in\mathbb{R}
\end{equation}
and the change of coordinates $(\bar x,\bar y)\mapsto (u,v)$ on $U_+$:
\begin{equation}\label{eq:u.v.plus.1b.neg}
u=|\bar a|\int_{\bar x}^{+\infty}
\frac{e^{-\frac{\xi}{h+1}}}{\left(1-\bar\varepsilon_1 e^{-\frac{h+2}{h+1}\xi}\right)^{3/2}}\,d\xi,
\quad
v=\bar y+\operatorname{sign}(\bar a)\int_{\bar x}^{+\infty}
\frac{e^{\xi}}{\sqrt{1-\bar\varepsilon_1 e^{-\frac{h+2}{h+1}\xi}}}\,d\xi+D, \,\,\, D\in\mathbb{R}\,.
\end{equation}
We now derive a necessary compatibility condition on the parameters of $U_\pm$ by comparing the values of $Z^h(R)$ along $\mathcal S$. Indeed, in view of \eqref{eq:ZhR.1b.II},
$$
Z^h(R)=
\operatorname{sign}(a)^h\,\frac{b}{2a^{h+1}}\,Q_h(0)
\quad\text{on }U_-\,,
\quad
Z^h(R)=
(-1)^h\operatorname{sign}(\bar a)^h\,\frac{b}{2\bar a^{h+1}}\,Q_h(0)
\quad\text{on }U_+.
$$
Since $Z^h(R)$ is a smooth function on $U$, these values must coincide. As $Q_h(0)\neq 0$, it follows that
\begin{equation}\label{eq:gluing.a.1b.neg}
\bar a=(-1)^h\,a.
\end{equation}
In order to make explicit the gluing of $g|_{U_+}$ and $g|_{U_-}$ along $\mathcal S$, it is convenient to perform a change of variable in the integrals defining the coordinate $u$. Namely, we set
$$
\tau=-e^{-\frac{\xi}{h+1}}\quad\text{in }\eqref{eq:u.v.minus.1b.neg},
\quad
\tau=e^{-\frac{\xi}{h+1}}\quad\text{in }\eqref{eq:u.v.plus.1b.neg},
$$
and we denote by $t$ the endpoint of integration, respectively
$$
t=-e^{-\frac{x}{h+1}}\quad\text{on }U_-,
\qquad
 t=e^{-\frac{\bar x}{h+1}}\quad\text{on }U_+.
$$
This yields
$$
u(t)=-(h+1)|a|\int_t^0 \frac{d\tau}{\bigl(1-\varepsilon_1(-\tau)^{h+2}\bigr)^{3/2}}
\quad\text{if } t<0,
\quad
u(t)=-(h+1)|\bar a|\int^0_{ t} \frac{d\tau}{\bigl(1-\bar\varepsilon_1\tau^{h+2}\bigr)^{3/2}}
\quad\text{if } t>0.
$$
The two branches glue to a $C^\infty$ function at $t=0$ if and only if 
\begin{equation}\label{eq:gluing.1b.neg.final}
|a|=|\bar a|\qquad \text{and}\qquad \bar\varepsilon_1 = (-1)^{h}\,\varepsilon_1.
\end{equation}
Moreover, since
$$
\frac{du}{dt}\bigg|_{t=0}=(h+1)|a|\neq 0,
$$
the function $t\mapsto u$ is a local diffeomorphism. Therefore, $(t,v)$ is also a local coordinate system on $U$.
Taking into account \eqref{eq:gluing.a.1b.neg} and \eqref{eq:gluing.1b.neg.final}, in the coordinates $(t,v)$, the metric $g|_U$ takes the form
$$
g|_U=
\frac{2(h+1)|\bar a|}{\bigl(1-\bar\varepsilon_1 t^{h+2}\bigr)^{3/2}}\,dt\,dv
-
\frac{\bar a\,t^{h+2}}{1-\bar\varepsilon_1 t^{h+2}}\,dv^2.
$$
After the rescaling
$$
\tilde t=\frac{t}{\left((h+1)^2|\bar a|\right)^{1/h}},
\qquad
\tilde v=(h+1)^{1+\tfrac2h}\,|\bar a|^{1+\tfrac1h}\, v,
$$
the metric becomes
$$
g|_U=
\frac{2}{\bigl(1-\kappa \tilde t^{\,h+2}\bigr)^{3/2}}\,d\tilde t\,d\tilde v
+
\eta\,
\frac{\tilde t^{\,h+2}}{1-\kappa \tilde t^{\,h+2}}\,d\tilde v^{\,2},\quad \eta=-\operatorname{sign}(\bar a),
\,\,\,
\kappa=\bar\varepsilon_1\bigl((h+1)^2|\bar a|\bigr)^{\frac{h+2}{h}}.
$$
Therefore,  these metrics are parametrised by
$(\eta,\kappa)$, $\eta\in\{-1,1\}$, $\kappa>0$ if $h$ is odd (in this case metrics with opposite $\kappa$ are isometric) and $\kappa\in\mathbb R\setminus\{0\}$ if $h$ is even. Two such metrics are  isometric only if the corresponding couples coincide.

Up to  renaming some parameters and scaling the coordinates, we arrive at normal forms \ref{case1b2.main}-\ref{case1b2bis.main} of Theorem \ref{th.main}.

\subsubsection{Local forms of $(M^{\ref{case2c}},g)$ near singular points}\label{sec:local.2c}

\smallskip\noindent\textbf{Case 1: the metric $g$ is Riemannian or negative definite.}\label{sec.Riemannian.2c.sing}
By Section \ref{sec:regular.locus.2c}, all regular points are of type \ref{case2c} and  singular points are isolated. Hence the regular locus of $(M,g)$ is connected.
Taking into account that the metric is assumed to be Riemannian or negative definite (hence $\varepsilon_1=\varepsilon_2=:\varepsilon$) and in view of Lemma \ref{lem:regular_boundary}, the regular locus is locally isometric to the standard model of type \ref{case2c} associated  to a fixed  parameter quadruple
$
(a,c,\varepsilon,\varepsilon).
$

Let $s \in M$ be a singular point and $U$ be a neighbourhood of $s$ such that $V:=U \setminus \{s\}$ is locally isometric to an open subset of a standard model of type \ref{case2c}. We denote by $(x,y)$ the coordinates on $V$ corresponding to such identification and we set $\eta := \mathrm{sign}(x)$. Then
$$g|_V=a\eta \left(\frac{dx^2}{f(x)^2\, |x|}
          +  \frac{|x| }{f(x) }\, dy^2\right),
\quad 
f(x) = 2\varepsilon\, x^2+c\varepsilon \,x+1.
$$
In order to test whether the metric extends smoothly across $x=0$, we introduce a coordinate system adapted to the degeneracy of the metric. Namely, we define
$$
\rho =\int_0^x\frac{1}{\sqrt{|\xi|}} \, d\xi=2\eta\sqrt{|x|}, 
\quad 
\theta = \frac{y}{2}.
$$
In these coordinates, a direct computation yields
$$
g|_{V}
=
a \eta\left(
\frac{d\rho^2}{f(\eta\rho^2/4)^{\,2}}
+
\frac{\rho^2}{f(\eta \rho^2/4)}\,d\theta^2
\right).
$$

Since $\theta = \frac{y}{2}$, the Killing vector field $\partial_y$ is proportional to $\partial_\theta$ in the new coordinates. Hence, its flow preserves $\rho$ and the curves $\{\rho=\mathrm{const}\}$ coincide with its orbits. In particular, $(\rho,\theta)$ can be regarded as polar-type coordinates adapted to the Killing field, where $\rho$ parametrises the space of orbits.

We now pass to Cartesian coordinates
$u=\rho\cos\theta$, $v=\rho\sin\theta$. A direct computation yields
$$
g|_{V}
=
\frac{a\eta}{ f\!\left(\eta\frac{u^2+v^2}{4}\right)} \left(du^2+dv^2 \right)
+\frac{a}{4}\,
h\!\left(\eta\frac{u^2+v^2}{4}\right)
\left(\frac{u\,du+v\,dv}{ f\!\left(\eta\frac{u^2+v^2}{4}\right)}\right)^2, \quad h(s) = -2\varepsilon\, s-c\varepsilon.
$$
Observe that all coefficients extend smoothly at the singular point $s$ (corresponding to $u=v=0$).

\smallskip
Two such metrics are isometric if and only if the corresponding quadruples $(a,c,\eta,\varepsilon)$ coincide. 
Indeed, $\eta=\pm1$ is determined by the signature of $g$. 
Moreover, in view of Table \ref{table2}, the scalar invariants $R$ and $\Delta R$ uniquely determine the parameters $a$, $c$ and $\varepsilon$. 

\noindent
Up to obvious renaming, the above metrics coincide with metrics \ref{case2c.singular} of Theorem \ref{th.main}.

\bigskip\noindent\textbf{Case 2:  the metric  $g$ is Lorentzian}

\medskip\noindent\textbf{Case 2.1:  the Lorentzian  metric  $g$  near a Morse singularity of the scalar curvature $R$. }
Let $s\in M$ be a Morse singularity of the scalar curvature $R$. By Lemma \ref{prop.killing.morse} contained in the Appendix,  there exists a Killing vector field $K$ on a neighbourhood $U$ of $s$, which vanishes only at $s$. 
Moreover, by Section \ref{sec:singular.locus.2c}  Case 2.2, the singular locus in $U$ consists of two lightlike curves $\mathcal S_1$ and $\mathcal S_2$, both integral curves of $K$, intersecting precisely at $s$.
The complement
$U\setminus (\mathcal S_1\cup \mathcal S_2)$
has four connected components, all consisting of regular points of type \ref{case2c}. As observed at the
end of Section \ref{sec:singular.locus.2c}, the parameters of the corresponding standard models are the same on all these components.

Let $\Omega$ be one of the connected components of
$U\setminus(\mathcal S_1\cup\mathcal S_2)$. On $\Omega$, we may write
$$g|_\Omega=a\left(\frac{dx^2}{f^2(x)}-\frac{x}{f(x)}\,dy^2\right),\quad f(x)=1+c\varepsilon x+2\varepsilon x^2.
$$
In order to test whether the metric extends smoothly across the singular locus, we introduce a coordinate system adapted to the degeneracy of the metric. Namely, we define
$$
\rho =\int_0^x\frac{1}{\sqrt{|\xi|}} \, d\xi=2\eta\sqrt{|x|}, 
\quad 
t = \frac{y}{2}, \quad\eta=\operatorname{sign}(x)\,.
$$
In these coordinates, a direct computation yields
$$g|_\Omega=a\eta\left(\frac{d\rho^2}{f(\eta\rho^2/4)^2}-\frac{\rho^2}{f(\eta\rho^2/4)}\,dt^2\right).
$$
Since $t = \frac{y}{2}$, the Killing vector field $\partial_y$ is proportional to $\partial_t$ in the new coordinates. Hence, its flow preserves $\rho$ and the curves $\{\rho=\mathrm{const}\}$ coincide with its orbits. In particular, $(\rho,t)$ can be regarded as the Lorentzian analogue of polar-type coordinates obtained in the Riemannian case above. By introducing the hyperbolic Cartesian coordinates
$u=\rho\cosh t$, $v=\rho\sinh t$
and by putting
$$
s=\eta\frac{u^2-v^2}{4},
$$
we obtain
$$g|_\Omega=a\left(\eta\frac{du^2-dv^2}{f(s)}+\frac{h(s)}{f(s)^2}(u\,du-v\,dv)^2\right),
\quad h(s)=-\frac{c\varepsilon}{4}-\frac{\varepsilon}{2}s\,.
$$
Observe that all coefficients extend smoothly  across $s=0$. Hence the metric extends smoothly across the singular locus (corresponding to $u^2-v^2=0$).

\smallskip
Two such metrics are isometric if and only if the corresponding triples $(a,c,\varepsilon)$ coincide. The sign $\eta$ is not essential, since the local isometry exchanging the coordinates $u$ and $v$ identifies the two choices $\eta=\pm1$.
Up to this identification, and after the obvious renaming and rescaling of variables, the above metrics coincide with metrics \ref{case2c.singular.bis} of Theorem \ref{th.main}.

\medskip\noindent\textbf{Case 2.2:  the Lorentzian metric  $g$  near singular points where  $dR\neq 0$.}
Let $s\in M$ be a singular point such that $dR_s \neq 0$.  By Lemma \ref{prop.principal} contained in the Appendix, there exists a nowhere vanishing Killing vector field $K$ on a neighbourhood $U$ of $s$. 
By Section \ref{sec:singular.locus.2c} Case 2.1, after possibly shrinking $U$, the singular locus in $U$ is an isolated lightlike curve $\mathcal S$, which is an integral curve of $K$. As a consequence,  the set $\mathcal{S}$ separates $U$ into two connected components, denoted by $U_+$ and $U_-$, both consisting only of regular points of type \ref{case2c}, with the same parameters $(a,c,\varepsilon,-\varepsilon)$.

We now introduce a vector field $Z$ on $U$ by imposing
$$
g(Z,Z)=0, \qquad g(K,Z)=\left(-\frac{g(K,K)}{I}\right)^{\frac{3}{10}},
$$
where $I:=g^{-1}(dR,dR)$.  The particular normalization is chosen so as to
simplify computations in the coordinate
construction below and the resulting expression of the metric. We first verify that it is legitimate:
we show that the function $\left(-\frac{g(K,K)}{I}\right)^{\frac{3}{10}}$ is smooth
and well-defined on the whole neighbourhood $U$, including all singular points.
Since $K$ is Killing, $dR(K)=0$, i.e., $dR\in \operatorname{Ann}(K)$.
As $\dim M=2$ and $K\neq 0$, the annihilator $\operatorname{Ann}(K)$ is
$1$-dimensional and is generated by $\iota_K\operatorname{vol}_g$. Therefore,
there exists a smooth function $\varphi$ on $U$ such that
$
dR=\varphi\,\iota_K\operatorname{vol}_g.
$
Since $dR\neq 0$ on $U$, it follows that $\varphi\neq 0$ everywhere.
On a 2-dimensional Lorentzian manifold one has
$$
g^{-1}(\iota_K\operatorname{vol}_g,\iota_K\operatorname{vol}_g)=-g(K,K),
$$
and hence
$$
I=g^{-1}(dR,dR)=-\varphi^2\,g(K,K).
$$
Therefore,
$$
\left(-\frac{g(K,K)}{I}\right)^{\frac{3}{10}}=\varphi^{-\frac{3}{5}}
$$
is a smooth nowhere vanishing function on $U$. 
On each connected component $U_\pm$, using the coordinates of the standard model
of type \ref{case2c} and considering that in such coordinates
$$g|_{U_\pm}=a \left(\frac{dx^2}{f(x)^2\, x}
          -  \frac{x }{f(x) }\, dy^2\right),
\quad 
f(x) = 2\varepsilon\, x^2+c\varepsilon \,x+1,$$
 the vector field $Z$ is therefore of the form
$$
Z=
\left(\frac{a^2}{6}\right)^{\frac35}
\left(
\frac{\delta}{a}\,\partial_x
-
\frac{1}{a x\sqrt{f(x)}}\,\partial_y
\right),
\quad \delta\in\{-1,1\}.
$$
Moreover, since in these coordinates $K=\partial_y$, it
is immediate to check that $[Z,K]=0$ on the regular locus $U\setminus\mathcal S$, and therefore, by density, on the whole of $U$. Hence, $K$ and $Z$ define local coordinates $(u,v)$ on $U$ such that
$$
Z=\partial_u\,,\quad K=\partial_v\,.
$$
Moreover, since $\mathcal S$ is an integral curve of $K$, we may choose these
coordinates in such a way that
$$
\mathcal S=\{u=0\},\qquad U_\pm=\{\pm u>0\}.
$$
Since
$$
K(u)=0,\quad K(v)=1,\qquad Z(u)=1,\quad Z(v)=0,
$$
a direct computation yields
$$
u=
\delta\,a\left(\frac{a^2}{6}\right)^{-\frac35}x,
\quad
v=
y+\delta\int^x\frac{ds}{s\sqrt{f(s)}}.
$$
In these coordinates, the metric takes the form
$$
g =
2\left(\frac{a^2}{6}\right)^{\frac35}
f(\kappa u)^{-\frac32}\,du\,dv
-
\frac{a\,\kappa u}{f(\kappa u)}\,dv^2,
\quad
\kappa = \delta a^{-1}\left(\frac{a^2}{6}\right)^{\frac35}.
$$
We observe that the right-hand side defines a smooth tensor field on the whole neighbourhood $U$, since $f(0)=1$ and therefore the metric extends smoothly across $\mathcal S=\{u=0\}$.

Up to a rescaling of the coordinates $(u,v)$ and a renaming of the parameters,
this yields the normal form
$$\kappa
\left(
\frac{2}{f^{\frac32}(x)}\,dx\,dy
-
\frac{x}{f(x)}\,dy^2
\right),\quad
f(x)=1+c\varepsilon x+2\varepsilon x^2 \,,\,\,\kappa>0.
$$

We now show that this normal form is not new, since it is nothing but the
normal form \ref{case2c.singular.bis}  written in a neighbourhood of a point of the singular locus
different from the intersection point of its two branches.
 Since $f(0)=1$, the function
$$
\psi(x):=\int_0^x\frac{1-f(s)^{-\frac12}}{2s}\,ds
$$
is smooth near $x=0$. Define
$$
\tilde x=\frac{e^{\frac y2+\psi(x)}+xe^{-\frac y2-\psi(x)}}{2},\quad
\tilde y=\frac{e^{\frac y2+\psi(x)}-xe^{-\frac y2-\psi(x)}}{2}.
$$
This gives a local change of coordinates near every point of $\{x=0\}$.
Putting
$$
t=\tilde x^2-\tilde y^2,\quad h(t)=-2\varepsilon t-c\varepsilon ,
$$
straightforward, but lengthy, computations give
$$
4\kappa\left(
\frac{d\tilde x^2-d\tilde y^2}{f(t)}
+
\frac{h(t)}{f(t)^2}
\left(\tilde x\,d\tilde x-\tilde y\,d\tilde y\right)^2
\right)
=
\kappa\left(
\frac{2}{f(x)^{\frac32}}\,dx\,dy
-
\frac{x}{f(x)}\,dy^2
\right).
$$
After renaming $4\kappa$ as $\kappa$, this is precisely the normal form
\ref{case2c.singular.bis}.

\section*{Appendix: Killing vector fields as parallel sections of the Kostant connection and their extension}


In this section we give some extension results concerning Killing vector fields on a $2$-dimensional pseudo-Riemannian manifold $(M,g)$, in particular Lemma \ref{prop.principal} and Lemma \ref{prop.killing.morse}.

\smallskip
In order to do it, as a first step, we briefly recall the known fact that Killing vector fields of a pseudo-Riemannian manifold $(M,g)$ are in one-to-one correspondence with $\widetilde\nabla$-parallel sections of the \emph{Kostant bundle} 
$$
TM \oplus \Lambda^{1}_{1}(M),
$$
where $\Lambda^{1}_{1}(M)$ is the bundle of $g$-skew-symmetric endomorphisms of $TM$ and the \emph{Kostant connection} $\widetilde\nabla$ is given by
\begin{equation}\label{kostant}
\widetilde\nabla_X(Y,A)
:=\bigl(\nabla_X Y - A(X),\;\nabla_X A + R(Y,X)\bigr),
\end{equation}
with $\nabla$ the Levi-Civita connection and $R$ the curvature tensor (one can consult also \cite{console,discala} for more details). 
Indeed, if $Y$ is a Killing vector field and one sets $A:=\nabla Y$, then the
Killing equation implies that $A$ is $g$-skew-symmetric.
Moreover, the  second covariant derivative identity for Killing fields
yields $\nabla_X A + R(Y,X)=0$, hence $\widetilde\nabla(Y,A)=0$.
Conversely, if $(Y,A)$ is a parallel section of the Kostant bundle, then
 $\nabla_X Y=A(X)$ for all $X$, and since $A$ is $g$-skew-symmetric, $Y$ satisfies the Killing equation.

\smallskip
We now concentrate on the case of $2$-dimensional pseudo-Riemannian manifolds $(M,g)$.
Fix a local coordinate system $(x^1,x^2)$ on an open set $U$ of 
$M$, and let $(Y,A)$ be a local section of the Kostant bundle over $U$. Writing
$$
Y=Y^i\partial_i,\qquad A=A^i_j\,\partial_i\otimes dx^j,
$$
we compute the components of $\widetilde\nabla(Y,A)$:
\begin{equation}\label{kostantloc}
\widetilde\nabla_{\partial_k}(Y,A)
\overset{\eqref{kostant}}{=}\Bigl(\left(Y_{,k}^i-A_k^i\right)\partial_i\,,\, \left(A^i_{j,k}+R^i_{j\ell k}\,Y^\ell\right)\partial_i  \otimes dx^j\Bigr),
\end{equation}
where commas denote covariant derivatives w.r.t. the Levi-Civita connection of $g$. Let
$
\epsilon := \sqrt{|\det(g)|}\,dx^1\wedge dx^2
$
denote the pseudo-Riemannian volume form on $U$, and let $\epsilon_{ij}$ be its
components in the chosen coordinates. Raising an index with the metric, we set
$
\epsilon_i^{j}:=g^{jk}\epsilon_{ki}.
$
Since, in dimension two, for any local section
$A\in\Gamma(\Lambda^{1}_{1}(M)|_U)$ there exists a unique function
$\omega\in C^\infty(U)$ such that
\begin{equation}\label{eq:Aomega}
A_i^{j}=\omega\,\epsilon_i^{j},
\end{equation}
 the Kostant bundle admits a natural local identification
\begin{equation}\label{identification}
TM\oplus\Lambda^{1}_{1}(M)\;\cong\; TM\oplus\R ,\qquad
(Y,A)\longmapsto (Y,\omega).
\end{equation}
We now compute, in local coordinates, the Kostant connection induced on $TM\oplus\R$ via the vector bundle isomorphism \eqref{identification}, still denoted by $\widetilde\nabla$ with a slight abuse of notation.
Substituting  \eqref{eq:Aomega} into \eqref{kostantloc}, and using the fact that the Levi-Civita connection preserves both the metric and the volume form, we obtain
$$
\widetilde\nabla_{\partial_k}(Y,A)
=\Bigl(\bigl(Y^i_{,k}-\omega\,\epsilon_k^{i}\bigr)\partial_i\,,\,
\bigl(\omega_{,k}\,\epsilon_i^{j}+R^j_{i\ell k}Y^\ell\bigr)\partial_j\otimes dx^i\Bigr).
$$
Since,
on a $2$-dimensional pseudo-Riemannian manifold, 
$R^j_{i\ell k} =\frac{R}{2}\bigl(\delta^j_{\ell}\,g_{ik}-\delta^j_{k}\,g_{i\ell}\bigr)$,
the previous expression becomes
$$\widetilde\nabla_{\partial_k}(Y,A)
=\Bigl(\bigl(Y^i_{,k}-\omega\,\epsilon_k^{i}\bigr)\partial_i\,,\,
\bigl(\omega_{,k}\,\epsilon_i^{j}
+\tfrac{R}{2}\,(Y^j g_{ik}-\delta^j_k\,g_{i\ell}Y^\ell)\bigr)
\partial_j\otimes dx^i\Bigr).$$
Furthermore
$$
\bigl(\omega_{,k}\,\epsilon_i^{j}
+\tfrac{R}{2}\,(Y^j g_{ik}-\delta^j_k\,g_{i\ell}Y^\ell)\bigr)
\partial_j\otimes dx^i=\left(\omega_{,k}
+\tfrac{R}{2\sqrt{|\det g|}}\,(g_{1k}g_{2\ell}-g_{2k}g_{1\ell})Y^\ell\right)
\epsilon_i^{j}\,\partial_j\otimes dx^i.
$$
Therefore, under the identification \eqref{identification} (see also \eqref{eq:Aomega}) the Kostant connection induces a connection on $TM\oplus\R$ which, after  some algebraic manipulations, is given in local coordinates by
\begin{equation}\label{kostantloc2}
\widetilde\nabla_{\partial_k}(Y,\omega)
=\Bigl(\bigl(Y^i_{,k}-\omega\,\epsilon_k^{i}\bigr)\partial_i,\;
\omega_{,k}
-\tfrac{R}{2}\, g_{ki}\epsilon^{ij}g_{j\ell}Y^\ell\Bigr).
\end{equation}

%
%
%
\begin{Lemma}\label{lemma:canonical_line}
Let $(M,g)$ be a $2$-dimensional pseudo-Riemannian manifold. Suppose that the scalar curvature $R$ satisfies $\mathrm{d}R\neq 0$ and almost every point of $M$ admits  a neighbourhood where the space of Killing vector fields is nontrivial. Then the Kostant bundle admits a canonical\,\footnote{Depending only on the metric $g$.} rank-one subbundle with the property that any (local) $\widetilde\nabla$-parallel section necessarily takes values in this subbundle.
\end{Lemma}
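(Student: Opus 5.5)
Since any Killing field $Y$ preserves the scalar curvature, the obvious first move is to differentiate $Y(R)=0$ and read off linear algebraic constraints on a parallel section $(Y,\omega)$ of the Kostant bundle (in the form \eqref{kostantloc2}). The point is that these constraints involve only the curvature invariants of $g$, so the subbundle they cut out is canonical.

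First constraint. A $\widetilde\nabla$-parallel section $(Y,\omega)$ satisfies $Y(R)=dR(Y)=0$: its first component is a Killing field, and the flow of a Killing field preserves $R$. Since $dR\neq 0$, at each point this forces
\[
Y=\lambda\,J, \qquad J:=\epsilon(\mathrm{grad}\,R),
\]
for some function $\lambda$. Here $J$, the rotation of the gradient (explicitly $J^i=\epsilon^{ij}R_{,j}$), is a canonical nowhere-vanishing vector field spanning $\ker dR$.

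Second constraint. Differentiate $dR(Y)=0$ covariantly along $\partial_k$ and use $Y^i_{,k}=\omega\,\epsilon_k^{\,i}$. This gives
\[
R_{,ik}\,Y^i+\omega\,\epsilon_k^{\,i}R_{,i}=0 \quad\text{for }k=1,2,
\]
a linear system in the unknowns $(\lambda,\omega)$. Substituting $Y=\lambda J$, it reads
\[
\lambda\,(\nabla dR)(J,\cdot)+\omega\,\iota_{\epsilon\cdot}dR=0 .
\]
The term $\iota_{\epsilon\cdot}dR$ is (up to sign) the $1$-form $g(\mathrm{grad}\,R,\cdot)$ rotated by $\epsilon$, which is nonzero. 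Hence, pointwise, the pairs $(\lambda,\omega)$ solving this system form a subspace of dimension at most one. I would define the candidate subbundle $L$ as follows: at a point $p$, $L_p$ is the set of $(\lambda J(p),\omega)$ satisfying these two-component equations at $p$. Concretely, pairing with $J$ and with $\mathrm{grad}\,R$ gives two scalar equations,
\[
\lambda\,\nabla dR(J,J)+\omega\, c_1=0,\qquad \lambda\,\nabla dR(J,\mathrm{grad}R)+\omega\, c_2=0,
\]
where $c_1,c_2$ are explicit expressions in $|dR|^2$. Either $(c_1,c_2)$ vanishes, which has to be handled separately, or the solution set is a line.

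The main obstacle is to show that this solution space is \emph{exactly} one-dimensional everywhere, so that $L$ is a genuine rank-one subbundle rather than a space whose rank jumps to $0$. This is where the hypothesis that Killing fields exist on a dense set enters. On each such open set there is a nonzero parallel section, and since a parallel section vanishing at a point vanishes identically, it is pointwise nonzero. It lies in the solution space, so the linear system above is degenerate there, i.e. its $2\times 2$ determinant vanishes. That determinant is a smooth (in fact polynomial) expression in $g$, $dR$ and $\nabla dR$, so by continuity and density it vanishes on all of $M$. Since the coefficient column of $\omega$ is nonzero (because $dR\neq0$ and $\epsilon$ is nondegenerate; I would verify this, and in the Lorentzian case with $\mathrm{grad}\,R$ isotropic check it by choosing the pairing appropriately), the rank of the system is exactly $1$ everywhere. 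Hence the kernel is a line at every point, depending smoothly on the point, spanned for instance by $(c\,J,\,-\nabla dR(J,\cdot)\text{-coefficient})$. This defines $L$, which is canonical because it is built from $g$ only. By construction, every local parallel section takes values in $L$.
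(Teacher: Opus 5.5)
Your proof is correct and is essentially the paper's argument. The first row of the paper's system \eqref{eq:system} (equation \eqref{eq:con1}) is a nonzero multiple of $dR(Y)=0$, and its other two rows (equation \eqref{eq:con2}) are the covariant derivative of that relation, so you run the same rank-plus-density argument after eliminating the first equation via $Y=\lambda J$; you obtain $dR(Y)=0$ directly from invariance of $R$ under Killing flows, where the paper extracts it from the integrability condition $\omega_{,ij}=\omega_{,ji}$. Your pairing detour is unnecessary and slightly off, since pairing with $\mathrm{grad}\,R$ always gives $c_2=0$, but, as you say, the covector $\epsilon_k^{\,i}R_{,i}$ multiplying $\omega$ is nonzero in every signature, so the rank is at least one, which is all that is needed.
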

\begin{proof}

Fix an open set $U\subset M$ sufficiently small to be covered by a single coordinate chart and such that a nontrivial Killing vector field $Y$ is defined at almost every its point.
As recalled above, $Y$ lifts to a $\widetilde\nabla$-parallel section $(Y,\nabla Y)$ of the Kostant bundle over $U$. Under the local bundle isomorphism
\eqref{identification} (see also \eqref{eq:Aomega}), this section corresponds to a
section $(Y,\omega)$ of $TM\oplus\R$ over $U$.
Since $(Y,\omega)$ is $\widetilde\nabla$-parallel, the local expression
\eqref{kostantloc2} yields
\begin{equation}\label{eq:parallel_system_Y}
Y^j_{,i}=\omega\,\epsilon_i^{\,j},
\qquad
\omega_{,i}=\frac{R}{2}\,g_{i a}\epsilon^{ab}g_{b\ell}Y^\ell.
\end{equation}
By covariantly differentiating the second equation in
\eqref{eq:parallel_system_Y} and by taking the skew-symmetric part, we obtain
$$
0=\bigl(R\,g_{i a}\epsilon^{ab}g_{b\ell}Y^\ell\bigr)_{,j}
 -\bigl(R\,g_{j a}\epsilon^{ab}g_{b\ell}Y^\ell\bigr)_{,i}
$$
as $\omega_{,ij}-\omega_{,ji}=0$. Moreover, by expanding this identity and by using $\nabla g=0$ and $\nabla\epsilon=0$, we get
\begin{equation*}
0=R_{,j}\,g_{i a}\epsilon^{ab}g_{b\ell}Y^\ell
 -R_{,i}\,g_{j a}\epsilon^{ab}g_{b\ell}Y^\ell
 +R\,g_{i a}\epsilon^{ab}g_{b\ell}Y^\ell_{,j}
 -R\,g_{j a}\epsilon^{ab}g_{b\ell}Y^\ell_{,i}.
\end{equation*}
Using the first equation in \eqref{eq:parallel_system_Y}, the last two terms cancel,
and we are left with
\begin{equation*}
R_{,j}\,g_{i a}\epsilon^{ab}g_{b\ell}Y^\ell
=
R_{,i}\,g_{j a}\epsilon^{ab}g_{b\ell}Y^\ell.
\end{equation*}
Since we are assuming by hypothesis $dR\neq 0$, it follows the existence of 
$\lambda\in C^\infty(U)$ such that
$g_{j a}\epsilon^{ab}g_{b\ell}Y^\ell=\lambda\,R_{,j}$.
For convenience, set $\widehat R_i:=\epsilon_{i a}g^{ab}R_{,b}$. It follows
immediately that
$
g_{i\ell}Y^\ell=\lambda\,\widehat R_i
$
and hence
\begin{equation}\label{eq:con1}
\widehat R_j\,g_{i\ell}Y^\ell=\widehat R_i\,g_{j\ell}Y^\ell.
\end{equation}
By covariantly differentiating \eqref{eq:con1} and by using again
\eqref{eq:parallel_system_Y}, we obtain the additional algebraic relation
\begin{equation}\label{eq:con2}
\widehat R_{j,k}\,g_{i\ell}Y^\ell+\omega\,\widehat R_j\,\epsilon_{ik}
=
\widehat R_{i,k}\,g_{j\ell}Y^\ell+\omega\,\widehat R_i\,\epsilon_{jk}.
\end{equation}
For each point $p\in U$, Equations \eqref{eq:con1} and \eqref{eq:con2} define a
homogeneous linear system whose coefficients are completely determined by the metric:
\begin{equation}\label{eq:system}
\begin{pmatrix}
\widehat R_{2}\,g_{11}-\widehat R_{1}\,g_{12}
&
\widehat R_{2}\,g_{12}-\widehat R_{1}\,g_{22}
&
0\\
\widehat R_{2,1}\,g_{11}-\widehat R_{1,1}\,g_{12} & \widehat R_{2,1}\,g_{12}-\widehat R_{1,1}\,g_{22}
& \widehat R_{1}\,\sqrt{|\det g|}\\
\widehat R_{2,2}\,g_{11}-\widehat R_{1,2}\,g_{12} & \widehat R_{2,2}\,g_{12}-\widehat R_{1,2}\,g_{22}
& \widehat R_{2}\,\sqrt{|\det g|}
\end{pmatrix}
\begin{pmatrix}
Y^{1}\\
Y^{2}\\
\omega
\end{pmatrix}
=
\begin{pmatrix}
0\\
0\\
0
\end{pmatrix}.
\end{equation}
Since $\mathrm dR\neq 0$, the covector $\widehat R$ is nowhere vanishing on $U$. The first row of the $3\times 3$ matrix in \eqref{eq:system} cannot vanish, as the non-degeneracy of $g$ would otherwise force $\widehat R=0$. Moreover, the third column of the matrix is non-zero, since it is proportional to $\widehat R$. It follows that the rank of the system \eqref{eq:system} is at least two at every point of $U$.

On the other hand, by assumption, a nontrivial Killing vector field is defined at almost every point of $U$. Such field lifts to a non-zero $\widetilde\nabla$-parallel section of the Kostant bundle and hence determines a nontrivial solution of \eqref{eq:system}. Therefore, the determinant of the matrix  in \eqref{eq:system} vanishes on a dense subset of $U$ and, by smoothness, 
vanishes identically on $U$. Then, the space of solutions of \eqref{eq:system} is one-dimensional  everywhere on $U$.
Since the coefficients of the linear system \eqref{eq:system} depend smoothly on $g$, its one-dimensional space of solutions varies smoothly with the base point. Hence, over $U$, these solution spaces define a smooth rank-one subbundle of the Kostant bundle.
Moreover, at each point $p\in M$, the corresponding line in the fiber is uniquely determined by the metric $g$, as the kernel of \eqref{eq:system} at $p$. Therefore, on overlaps of coordinate neighbourhoods, the locally defined line
subbundles necessarily agree. It follows that they glue together to define a global canonical rank-one subbundle of the Kostant bundle over $M$.

By construction, any $\widetilde\nabla$-parallel section of the Kostant bundle must take values in this subbundle. 
\end{proof}
%
%
%
%
%
%
\begin{Lemma}\label{prop.principal}
Let $(M,g)$ be a $2$-dimensional pseudo-Riemannian manifold. Let $p\in M$ such that $(dR)_p\neq 0$, where $R$ is  the scalar curvature of $g$. If for almost every point $q\in M\setminus \{p\}$ there exists a neighbourhood $U(q)\subset M$ of $q$ and a  nontrivial Killing vector field of $g|_{U(q)}$, then there exists a neighbourhood $U(p)$ of $p$ and a nowhere vanishing Killing vector field of $g|_{U(p)}$.
\end{Lemma}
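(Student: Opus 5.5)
Fix a neighbourhood $U$ of $p$ on which $dR\neq 0$; we may take it contractible and covered by one chart. The plan is to apply Lemma \ref{lemma:canonical_line} on $U$: since almost every point of $U\setminus\{p\}$ has a neighbourhood carrying a nontrivial Killing field, the hypotheses hold on $U$. We obtain a canonical line subbundle $L\subset TM\oplus\R$, and every local $\widetilde\nabla$-parallel section takes values in $L$. Shrinking $U$, choose a nowhere vanishing smooth section $\sigma=(Y_0,\omega_0)$ of $L$ over $U$. The goal is to find a smooth nowhere vanishing function $\mu$ such that $\mu\sigma$ is $\widetilde\nabla$-parallel on all of $U$. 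The $TM$-component $\mu Y_0$ is then a Killing field on $U$, by the correspondence between Killing fields and parallel sections of the Kostant bundle.

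First I will show that $L$ is $\widetilde\nabla$-invariant, i.e.\ $\widetilde\nabla_X\sigma=\alpha(X)\sigma$ for a smooth $1$-form $\alpha$ on $U$. Let $U'\subset U$ be the dense open set of points admitting a neighbourhood with a nontrivial Killing field. Near any $q\in U'$, a nontrivial Killing field gives a parallel section $s$. This $s$ is nonvanishing, since a parallel section vanishing at one point vanishes identically, and it lies in $L$. Hence $s=f\sigma$ with $f\neq0$ near $q$, and $0=\widetilde\nabla s=df\,\sigma+f\widetilde\nabla\sigma$. So $\widetilde\nabla\sigma=-d\log|f|\,\sigma$ there, which proves invariance and shows $\alpha=-d\log|f|$ is exact, hence closed, on $U'$. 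The condition ``$\widetilde\nabla\sigma$ is proportional to $\sigma$'' is closed, since $\sigma$ is nowhere zero, so by density it holds on all of $U$. The coefficient $\alpha$ is uniquely determined and smooth on $U$, for instance by pairing with a smooth dual frame. Likewise $d\alpha=0$ on the dense set $U'$, and so on $U$ by continuity.

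Since $U$ is contractible, $\alpha=d\phi$ for a smooth $\phi$. Setting $\mu=e^{-\phi}$ gives $\widetilde\nabla(\mu\sigma)=\mu(-d\phi+\alpha)\sigma=0$. Thus $(Y,\omega):=\mu\sigma$ is a parallel section on $U$, and $Y$ is Killing on $U=:U(p)$.

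It remains to prove that $Y$ vanishes nowhere, and I expect this to be the main point. Suppose $Y(q)=0$ at some $q\in U$. Then $\omega(q)\neq0$, since the parallel section is nonzero. From the first row of \eqref{eq:system}, a solution with $Y(q)=0$ forces nothing. From the second and third rows, however, we get $\omega\,\widehat R_i\sqrt{|\det g|}=0$ at $q$, and since $\widehat R\neq0$ this gives $\omega(q)=0$, a contradiction. More invariantly: a Killing field vanishing at $q$ generates rotations fixing $q$, so $R$ would be critical at $q$, contradicting $dR_q\neq0$. I would present this second argument, noting that $dR(Y)=0$ and that the linearised flow $e^{t\omega\epsilon}$ preserves $dR_q$, which forces $dR_q=0$ when $\omega\neq0$. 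Hence $Y$ is a nowhere vanishing Killing field on $U(p)$.
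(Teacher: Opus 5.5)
Your proof is correct and follows the same route as the paper. The shared steps are: $\widetilde\nabla$-invariance of the canonical line $L$ by density, flatness of the induced connection on $L$ (your closed form $\alpha$), a local parallel section (which you make explicit via the Poincar\'e lemma), and nonvanishing from the second and third rows of \eqref{eq:system}. Your alternative invariant argument for the last step is also valid in both signatures: $Y(q)=0$ forces $dR_q\circ\nabla Y(q)=0$, and $\nabla Y(q)=\omega\epsilon$ is invertible.
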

\begin{proof}
Throughout the proof we implicitly restrict to a neighbourhood $U(p)\ni p$ such that
$\mathrm dR\neq 0$ on $U(p)$. All bundles and connections are therefore considered over
$U(p)$.
We consider the Kostant bundle $K=TM\oplus\Lambda^1_1(M)$ endowed with the 
connection $\widetilde\nabla$, together with the canonical rank-one subbundle
$L\subset K$ constructed in Lemma \ref{lemma:canonical_line}.

By assumption, in a neighbourhood of almost every point of $U(p)$ there exists a nontrivial
Killing vector field. Equivalently, at almost every point there exists a non-zero
$\widetilde\nabla$-parallel section of $K$.
Such a section spans $L$ on its domain, and since it is parallel, the covariant
derivative of any local section of $L$ along any vector field is again a section
of $L$. Therefore, the subbundle $L$ is totally geodesic with respect to
$\widetilde\nabla$ almost everywhere.
Since both $L$ and $\widetilde\nabla$ depend smoothly on the base point, this
property extends to every point of $U(p)$. Hence, $\widetilde\nabla$ induces a connection
on the rank-one bundle $L$.

Every Killing vector field on $U(p)$ corresponds to a $\widetilde\nabla$-parallel section
of $K$, and hence to a parallel section of the induced connection on $L$.
The existence of a nontrivial parallel section of a connection on a line bundle implies
that the curvature of the connection vanishes. Since by assumption such a section exists
in a neighbourhood of almost every point of $U(p)$, the curvature of the induced connection
on $L$ vanishes almost everywhere on $U(p)$, and therefore vanishes identically by
smoothness.

Since the induced connection on $L$ is flat, there exists a nontrivial parallel
section in a neighbourhood of every point of $U(p)$, and in particular in a neighbourhood
of $p$. This section corresponds to a local Killing vector field.

Finally, using the local description \eqref{eq:system} of the canonical line $L$,
we note that if $Y^1=Y^2=0$ at a point $q$, then necessarily $\omega=0$ at $q$ as well,
because the third column of the system is proportional to $\widehat R$ and $\mathrm dR\neq 0$.
Hence, a parallel section $(Y,\omega)$ that vanishes in its $TM$-component at $q$ must be
identically zero. It follows that the Killing vector field obtained above is nowhere
vanishing in a neighbourhood of $p$, which concludes the proof.
\end{proof}

\begin{Lemma}\label{prop.killing.morse}
Let $(M,g)$ be a $2$-dimensional pseudo-Riemannian manifold. Let $p\in M$ be a Morse singularity of the scalar curvature $R$ of $g$. If for almost every point $q\in M$ there exists a neighbourhood $U(q)\subset M$ of $q$ and a nontrivial Killing vector field of $g|_{U(q)}$, then there exists a neighbourhood $U(p)$ of $p$ and a Killing vector field of $g|_{U(p)}$ that vanishes at the point $p$ only.
\end{Lemma}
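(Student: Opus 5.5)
The plan is to follow the same Kostant-bundle strategy as in Lemma \ref{prop.principal}, but now we must handle the point $p$ where $dR$ vanishes. We work on a punctured neighbourhood first. Since $p$ is a Morse critical point, we can choose a neighbourhood $U$ of $p$ such that $dR\neq 0$ on $U\setminus\{p\}$. We then apply Lemma \ref{lemma:canonical_line} on $U\setminus\{p\}$, which is connected and, after shrinking, an annulus. This gives the canonical line subbundle $L$ of the Kostant bundle, and, as in the proof of Lemma \ref{prop.principal}, a flat connection on $L$ induced by $\widetilde\nabla$.

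The obstruction is that $U\setminus\{p\}$ is not simply connected, so a flat line bundle connection may have holonomy around $p$. However, $L$ is a real line bundle and the holonomy lies in $\mathbb R^{*}$. We will show that it is trivial (or at most $\pm1$, which suffices, since $-Y$ is also Killing) by a second argument. Solutions of the Killing equation are $\widetilde\nabla$-parallel sections of the full Kostant bundle, which is flat on the simply connected set $U$ wherever Killing fields exist. So a parallel section of $L$ defined near one point of $U\setminus\{p\}$ can be continued along any path in $U$ as a $\widetilde\nabla$-parallel section of $K$. This continuation is governed by the smooth connection $\widetilde\nabla$ on all of $U$, including $p$.

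The key step is to prove that $\widetilde\nabla$ has vanishing curvature on the sub-bundle spanned by the continuation. Equivalently, the parallel-transported section along a small loop around $p$ returns to itself. I would argue this as follows. Transport along radial paths from $p$ (the fibre of $K$ at $p$) defines a $\widetilde\nabla$-parallel-in-radial-directions section $s$ on $U$ with prescribed value $s(p)$. The curvature of $\widetilde\nabla$ restricted to the values taken by Killing data vanishes on the dense set of $U$ where local Killing fields exist. By continuity, the curvature applied to $s$ vanishes on all of $U$, so $s$ is parallel on $U$.

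Concretely, I will choose $s(p)=(0,\omega_0)$ with $\omega_0\neq0$. This is the natural candidate, because at a critical point of $R$ the system \eqref{eq:system} degenerates, and its kernel at $p$ contains $(0,0,1)$. This yields a Killing field $Y$ with $Y(p)=0$ and $\nabla Y(p)=\omega_0\,\epsilon\neq0$.

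The main obstacle is exactly this extension across $p$: showing that the parallel section found on $U\setminus\{p\}$, namely the one lying in $L$, has a limit at $p$ equal to $(0,\omega_0)$ with $\omega_0\neq 0$. To handle it, I would argue directly via the compatibility condition $dR(Y)=0$, which holds for any Killing field. Near $p$, the Morse condition makes the Hessian of $R$ nondegenerate. Hence $Y(q)$ is determined up to scale as the vector orthogonal, in a suitable sense, to $dR(q)$, and along any ray $Y$ must vanish linearly with nonzero $\omega$.

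Finally, $Y$ vanishes only at $p$. On $U\setminus\{p\}$, a parallel section in $L$ with $Y(q)=0$ forces $\omega(q)=0$, because the third column of \eqref{eq:system} is proportional to $\widehat R\neq0$. Such a section would be identically zero, contradicting $\omega_0\neq0$.
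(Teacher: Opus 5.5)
Your proposal correctly identifies the difficulty: extending the parallel section of $L$ across $p$. It does not overcome it, and the justifications offered are incorrect.

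First, the full Kostant bundle is \emph{not} flat where Killing fields exist. Flatness of $\widetilde\nabla$ on the rank-$3$ bundle would give a $3$-dimensional space of local Killing fields, i.e.\ constant curvature, which is impossible where $dR\neq0$. Only the induced connection on the line $L$ is flat. Hence, for the radially transported section $s$ with $s(p)=(0,\omega_0)$, the step ``by continuity, the curvature applied to $s$ vanishes'' does not follow. The curvature annihilates $L_q$ for $q\neq p$, but you never show $s(q)\in L_q$, and that is exactly the content of the lemma.

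Second, holonomy $-1$ would not suffice. A parallel section of $L$ that changes sign around $p$ is not a section over the punctured disc, whereas the restriction of any Killing field defined near $p$ would be one. So you need holonomy exactly $1$.

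Third, $dR(Y)=0$ fixes only the direction of $Y(q)$. It gives no control on the size or the limit of the parallel section as $q\to p$, so ``along any ray $Y$ must vanish linearly with nonzero $\omega$'' is asserted rather than proved. Also, you only note that the kernel of \eqref{eq:system} at $p$ \emph{contains} $(0,0,1)$, and you never relate it to $L_q$ for $q$ near $p$.

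The paper sidesteps holonomy altogether. It takes a Killing field near a point $q$ below $p$, where $dR_q\neq0$, and parallel-transports it along the vertical segments $\sigma_s(t)=(s,t)$. This gives a smooth section $S$ on a thin rectangle through $p$ with $\widetilde\nabla_{\partial_y}S=0$. Each half $\{x>0\}$, $\{x<0\}$ of the rectangle is simply connected and misses $p$, so there $S$ coincides with the genuine parallel section of $L$. Hence $\widetilde\nabla_{\partial_x}S=0$ off $\{x=0\}$, and on $\{x=0\}$ by continuity. Then $Y(p)=0$ because the Killing flow fixes the isolated critical point of $R$.

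Alternatively, your kernel observation can be completed as follows. The $(Y^1,Y^2)$-block of rows $2$--$3$ of \eqref{eq:system} equals $\bigl(\begin{smallmatrix}\widehat R_{2,1}&-\widehat R_{1,1}\\ \widehat R_{2,2}&-\widehat R_{1,2}\end{smallmatrix}\bigr)(g_{ij})$. At a Morse point its determinant is a non-zero multiple of $\det(\nabla dR)_p$, so the kernel at $p$ is exactly $\operatorname{span}(0,0,1)$. The matrix then has constant rank $2$ on the whole disc $U$, and its kernel is a smooth line bundle extending $L$ across $p$. The $\widetilde\nabla$-invariance of this line bundle and the flatness of the induced connection extend to $p$ by continuity. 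Since $U$ is simply connected, you then get a nowhere-zero parallel section, i.e.\ a Killing field with $Y(p)=0$ and $\omega(p)\neq0$.
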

\begin{proof}
Let $p\in M$ be a Morse critical point of the scalar curvature $R$ of $g$.
Shrinking a neighbourhood $U(p)$ of $p$ if necessary, we may assume that $p$ is the only
critical point of $R$ in $U(p)$; in particular $\mathrm dR\neq 0$ on $U(p)\setminus\{p\}$.

Fix local coordinates $(x,y)$ on $U(p)$ such that $p=(0,1)$, and assume that the rectangle
$$
\{(x,y)\mid -1\le x\le 1,\,-1\le y\le 2\}
$$
is contained in the coordinate chart. Let $q=(0,0)$.
Since $\mathrm dR\neq 0$ at $q$, Lemma \ref{prop.principal} yields a nowhere vanishing
local Killing vector field near $q$, equivalently a non-zero $\widetilde\nabla$-parallel
section of the Kostant bundle $K$. Using the identification $K\simeq TM\oplus\R$, we write
this section as
$$
\left(
Y^1(x,y)\,,\,
Y^2(x,y)\,,\,
\omega(x,y)
\right)
$$
defined (after shrinking $\varepsilon>0$) in particular  for all $(x,y)$ with $x\in(-\varepsilon,\varepsilon)$
and $y=0$.

By Lemma \ref{lemma:canonical_line}, on $U(p)\setminus\{p\}$ the Kostant bundle admits a
canonical rank-one subbundle $L\subset K$, and every $\widetilde\nabla$-parallel section
takes values in $L$; hence such sections are unique up to multiplication by a constant.

We now consider the 1-parameter family of curves
$$
\sigma_s:[0,2]\to U, \     \sigma_s(t)= (s, t), \ s \in (-\varepsilon, \varepsilon)
$$
and we parallel-transport the vector
$
\left(
Y^1(s,0)\,,\,
Y^2(s,0)\,,\,
\omega(s,0)
\right)
$
along $\sigma_s$ with respect to $\widetilde\nabla$. This yields a smooth section $S$ of $K$
defined for $x\in(-\varepsilon,\varepsilon)$ and $y\in(0,2)$, satisfying
$\widetilde\nabla_{\partial_y}S=0$.

For $s\neq 0$, the curve $\sigma_s$ is contained in $U(p)\setminus\{p\}$; by Lemma \ref{prop.principal}, there exists locally along $\sigma_s$ a non-zero $\widetilde\nabla$-parallel
section, which must coincide with $S$ up to a constant since both take values in $L$ and
agree at $y=0$. Thus $S$ is $\widetilde\nabla$-parallel on $\{x\neq 0\}$.
By continuity, $S$ is $\widetilde\nabla$-parallel also along $\{x=0\}$, hence on a
neighbourhood of $p$.

Therefore $S$ corresponds to a local Killing vector field $Y$ defined near $p$.
Since the flow of a Killing vector field preserves $R$, it preserves the set of critical
points of $R$. As $p$ is an isolated critical point in $U(p)$, the flow fixes $p$, and thus
$Y(p)=0$.

Finally, $Y$ has no other zeros near $p$: indeed, on $U(p)\setminus\{p\}$ we have
$\mathrm dR\neq 0$, and Lemma \ref{prop.principal} implies that any nontrivial local
Killing vector field is nowhere vanishing there. Hence $Y$ vanishes only at $p$.
Lemma is proved.
\end{proof}

\section*{Acknowledgements}
G. M. was supported by the project ``Finanziamento alla Ricerca'' under the contract numbers 53\_RBA21MANGIO, and by the PRIN project 2022 ``Real and Complex Manifolds: Geometry and Holomorphic Dynamics'' (code 2022AP8HZ9). 
V. M. was supported by the DFG project 529233771 and the ARC Discovery Programme DP210100951.
F. S. was supported by the ``Starting Grant'' under the contract number 53\_RSG22SALFIL. G. M. and F. S. are members of the GNSAGA of the INdAM.

\subsection*{Declarations}

\subsubsection*{Associated data}

The authors declare that no 
no data was used for the research described in the article.

\subsubsection*{Conflict of interest }
The authors declare that they have no conflict of interest.

\end{document}